\newenvironment{nalign}{
	\begin{equation}
	\begin{aligned}
}{
	\end{aligned}
	\end{equation}
	\ignorespacesafterend
}
\theoremstyle{plain}
\newtheorem{lemma}{Lemma}[section]
\newtheorem{corollary}{Corollary}[section]
\newtheorem{theorem}{Theorem}[section]
\newtheorem{assumption}{Assumption}[section]
\newtheorem{algorithm}{Algorithm}[section]
\theoremstyle{definition}
\newtheorem{example}{Example}[section]
\newtheorem{remark}{Remark}[section]
\newcommand{\R}{\ensuremath{\mathbb R}} 
\newcommand{\p}{\mathbb{P}}
\newcommand{\e}{\mathbb{E}}
\newcommand{\A}{{\mathbb{A}}} 
\newcommand{\C}{{\mathcal{C}}} 
\DeclareMathOperator*{\LL}{\mathcal{L}}
\newlist{todolist}{itemize}{2}
\setlist[todolist]{label=$\square$}
\begin{document}

\title{Optimal control of mean field equations with monotone coefficients and applications in neuroscience}
\date{July 2, 2020}

\author{Antoine Hocquet}
\author{Alexander Vogler}

\affil{\small Technische Universit\"at Berlin, Berlin, Germany}

\maketitle

\unmarkedfntext{\textit{Mathematics Subject Classification (2020) ---} 93E20, 92B20, 65K10}

\unmarkedfntext{\textit{Keywords and phrases ---} Optimal control, McKean--Vlasov equations, FitzHugh-Nagumo neurons, stochatic differential equations, gradient descent}

\unmarkedfntext{\textit{Mail}: \textbullet$\,$ antoine.hocquet86@gmail.com $\,$\textbullet$\,$ vogler@math.tu-berlin.de}


\begin{abstract}
We are interested in the optimal control problem associated with certain quadratic cost functionals depending on the solution $X=X^\alpha$ of the stochastic mean-field type evolution equation in $\mathbb R^d$
\begin{equation}
\label{star}
dX_t=b(t,X_t,\mathcal L(X_t),\alpha_t)dt+\sigma(t,X_t,\mathcal L(X_t),\alpha_t)dW_t\,,
\quad X_0\sim \mu
\enskip\text{($\mu$ given),}
\end{equation}
under assumptions that enclose a sytem of FitzHugh-Nagumo neuron networks, and where for practical
purposes the control $\alpha_t$ is deterministic. 
To do so, we assume that we are given a drift coefficient that satisfies a one-sided Lipshitz condition, and that the dynamics \eqref{star} is subject to a (convex) level set constraint of the form $\pi(X_t)\leq0$.
The mathematical treatment we propose follows the lines of the recent monograph of Carmona and Delarue for similar control problems with Lipshitz coefficients.
After addressing the existence of minimizers via
a martingale approach, we show a maximum principle for \eqref{star}, and numerically investigate a
gradient algorithm for the approximation of the optimal control.
\end{abstract}

\tableofcontents

\section{Introduction}

\subsection*{Motivations}
\label{sec:intro}

Based on a modification of a model by van der Pol, FitzHugh \cite{fitzhugh1961impulses} proposed in 1961 the following system of equations in order to describe the dynamics of a single neuron subject to an external current $I$:
\begin{equation}
\label{single_neuron}
\begin{aligned}
&\dot v= v - \frac{1}{3}v^3 -w +I
\\
& \dot w=c(v +a -bw)
\end{aligned}
\end{equation}
for some constants $a,b,c>0$,
where the unknowns $v,w$ correspond respectively to the so-called voltage and recovery variables (see also Nagumo \cite{nagumo1962active}).
In presence of interactions, one has to enlarge the previous pair by an additional unknown $y$ that counts a fraction of open channels (synapic channels), and which is sometimes referred to as gating variable.

When it comes to an interacting network of neurons, it is customary to assume that the corresponding graph is fully connected, which is arguably a good approximation at small scales \cite{SchDeg17}. This implies that all the neurons in the given network add a contribution to the interaction terms in the equation.
Precisely, for a population of size $N\in\mathbb N,$  the state at time $t$ of the $i$-th neuron is described by the three-dimensional vector 
\[ X^i_t=(v^i_t,w^i_t,y^i_t) ,\quad i=1,\dots N,\]
and one is led to study the system of $3N$ stochastic differential equations:
\begin{equation}
\label{FHN}
\left\{
\begin{aligned}
&dv^i_t=\Big(v^i_t-\frac{(v^ i_t)^3}{3}-w_t^i + I_t\Big)dt  +\sigma _{ext} d W^i_t
\\
&\quad\quad  
- \frac{1}{N}\sum\nolimits_{j=1}^N\bar J(v^i_t-V_{rev})y^j_tdt
	- \frac{1}{N}\sum\nolimits_{j=1}^N\sigma^J(v^i_t-V_{rev})y^j_tdB_t^i
\\
&dw^i_t=c (v^i_t+a -b w^i_t)dt\enskip ,
\\
&dy^i_t=(a_r S (v_t^i)(1-y_t^i)-a_d y_t^i)dt
+
\sigma^{y^i}(v^i)d\tilde B^i_t\,.
\end{aligned}\right.
\end{equation}
In the above formula, $B^{i}$, $W^i$, $\tilde B^i$ are i.i.d.\ Brownian motions modelling independent sources of noise with respective intensities $\sigma^J,\sigma_{ext},\sigma^{y^i}(v^i)>0$. The last of these intensities depends on the solution, through the formula 
 \begin{equation}\label{sigma_y}
 \sigma^{y}(v)=\chi(y)\sqrt{\overline{a}S(v)(1-y)+\overline{b}y}
 \end{equation}
  with given constants $\overline{a},\overline{b}>0$ and
some smooth cut-off function $\chi\colon\R\to\R$ supported in $(0,1).$ 
Various physical constants appear in \eqref{FHN}, which we now briefly introduce: 
\begin{itemize}
	\item $V_{rev}$ is the synaptic reversal potential;
	\item $\bar J$ is (the mean of) the maximum conductance; 
	\item $S(v^i)$ is the concentration of neurotransmitters released into the synaptic cleft by the presynaptic neuron $i$;
	explicitly for $v\in\R$
	\begin{equation}
	\label{S_v}
	S(v)=\dfrac{T_{max}}{1+e^{-\lambda(v-V_T)}}
	\end{equation}
where $T_{max}$ is a given maximal concentration and $\lambda^{-1}>0,V_T>0,$ are constants setting the steepness, resp.\ the value,
at which $S(v)$ is half-activated (for typical values, see for instance \cite{destexhe1994synthesis});
	\item $a_r,a_d>0$ correspond to rise and decay rates, respectively, for the synaptic conductance.
	
\end{itemize}
In this model, the voltage variable $v^i$ is describing the membrane potential of the $i$-th neuron in the network, while the recovery variable $w^i$ is modeling the dynamics of the corresponding ion channels.
As already alluded to, the gating variable $y^i$ models a \textit{fraction} of open ion channels in the postsynaptic neurons, and thus ought to be a number between $0$ and $1$ (hence the cut-off $\chi(y^i)$ in \eqref{sigma_y}). Loosely speaking, $y^i$ should be thought as the output contribution of the neuron $i$ to adjoining postsynaptic neurons, resulting from the concentration $S(v^i)$ of neurotransmitters.
The resulting synaptic current from $i$ to $j$ affecting the postsynaptic neuron $j$ is then given by $-J(v^j-V_{rev})y^i$ where $J$ is the maximum conductance. This latter term is affected by noise coming from the environment, which in turn explains the structure of the interaction terms in the first equation. 
For a thorough presentation of \eqref{FHN} and its applications in the field of neurosciences, we refer for instance to the monograph of Ermentrout and Terman \cite{ermentrout2010mathematical}.

\subsection*{Propagation of chaos}
The system \eqref{FHN} has the generic form 
\begin{equation}\label{pre_mckean}
\left \{\begin{aligned}
dX_t^{N,i}&=b(t,X_t^{N,i},\overline{\mu}_{X_{t}^N},\alpha_t)dt+\sigma(t,X_t^{N,i},\overline{\mu}_{X_{t}^N},\alpha_t)dW_t^i\,, & t\in [0,T],
\\
X_0^{N,i}&\sim u_0,
\end{aligned}\right .
\end{equation}
for $i=1,\dots,N$, where $u_0$ is a probability measure on $\R^d$,
$(\alpha_t)$ is a control and $\bar\mu_{X_t^N}$ denotes the empirical measure
\begin{align*}
\overline{\mu}_{X_{t}^N}:=\dfrac{1}{N}\sum_{k=1}^{N}\delta_{X_t^{N,k}}.
\end{align*}
For $N\to \infty$, one is naturally pushed to investigate the convergence in law of the solutions of \eqref{pre_mckean} towards the probability measure $\mu=\LL(X|\p)$, where $X$ solves
\begin{equation}
\label{state}
\left \{\begin{aligned}
&dX_t
=b(t,X_t,\LL(X_t),\alpha_t)dt+\sigma(t,X_t,\LL(X_t),\alpha_t)dW_t,\quad  t\in [0,T]
\\
&X_0\in L^2(\Omega,\mathcal{F}_0,\p;\R^d).
\end{aligned}\right .
\end{equation} 
and where $b,\sigma$ are the coefficients obtained by substituting expectations in \eqref{pre_mckean} in place of empirical means.
In the context of \eqref{FHN}, a first mathematical investigation of such convergence is due to Baladron, Fasoli, Faugeras and Touboul \cite{baladron2012mean} (see also the clarification notes \cite{bossy2015clarification}).  
In this direction, the authors show that the sequence of symmetric probability measures 
\[ \mu_N:=\LL((X^{N,1},\dots,X^{N,N})|\p)\]
is \textit{$\mu$-chaotic}. Namely, for each $k\in \mathbb{N},k\leq N$ and $\phi_1,\dots,\phi_k\in C_b(C([0,T];\R^{d}))$
it holds
\begin{align*}
\lim\limits_{N\rightarrow \infty}\langle \mu_N,\phi_1\otimes\dots\otimes\phi_k\otimes1\otimes\dots\otimes1\rangle=\prod_{i=1}^{k}\langle \mu,\phi_i\rangle.
\end{align*}
This situation is usually referred to as ``propagation of chaos''.\\

\subsection*{Mean-field limit and control}
In this regard, taking $N\gg1$ guarantees that a ``good enough'' approximation of \eqref{FHN} is given by the mean-field limit \eqref{state},
where the corresponding coefficients
$(b,\sigma)\colon[0,T]\times \R^3\times \mathcal{P}(\R^3)\times \R\rightarrow \R^3\times \R^{3\times 3}$,
are given by
\begin{equation}\label{b_FHN}
\begin{aligned}
b(t,x,\mu,\alpha)
&=\begin{pmatrix}
v-\frac{v^3}{3}-w+\alpha \\
c(v+a-bw) \\
\overline{a}S(v)(1-y)-\overline{b}y
\end{pmatrix}
+
\begin{pmatrix}
-J(v-V_{rev})\int_{\R^3}z_3\mu(dz) \\
0 \\
0
\end{pmatrix},
\end{aligned}
\end{equation}
for $x=(v,w,y),$ and
\begin{equation}\label{sigma_FHN}
\begin{aligned}
\sigma(t,x,\mu,\alpha)&=\begin{pmatrix}
\sigma_{ext} & -\sigma^J(v-V_{rev})\int_{\R^3}z_3\mu(dz) & 0 \\
0 & 0 & 0 \\
0 & 0 & \chi(y)\sqrt{\overline{a}S(v)(1-y)+\overline{b}y}
\end{pmatrix}.
\end{aligned}
\end{equation}

In this paper, we concentrate our attention on the optimal control problem associated with a cost functional of the form
\begin{equation}
\label{cost_fun}
J:\A\rightarrow \R,\quad 
\alpha\mapsto \e\left(\int_{0}^{T}f(t,X_t^\alpha,\LL(X_t^\alpha),\alpha_t)dt+g(X_T^\alpha,\LL(X_T^\alpha))\right),
\end{equation} 
for suitable functions $f$ and $g$, and where $X^\alpha$ is subject to the dynamical constraint \eqref{state}. The functional cost ought to be minimized over some convex, admissible set of controls $\A.$

Because of potential applications in the treatment of neuronal diseases, the control of the stochastic FHN model has gained a lot of attention during the last years (see, e.g., \cite{stochFHN,barbu2016optimal}). 
The need to introduce random perturbations in the original model is widely justified from a physics perspective (see for instance \cite{deco2009stochastic} and the references therein).
In \cite{stochFHN} the authors investigate a FitzHugh-Nagumo SPDE which results from the continuum limit of a network of coupled FitzHugh-Nagumo equations.  
We have a similar structure in mind regarding the dependence of the coefficients on the control (namely, the dynamics of the membrane potential depends linearly on the control). Our approach here is however completely different, in that we hinge on the McKean-Vlasov type SDE \eqref{state} that originates from the propagation of chaos.

McKean-Vlasov control problems of this type were investigated in the past decade by Bensoussan, Frehse and Yam \cite{bensoussan2013mean}, but also by Carmona and co-authors (see for instance \cite{carmona2013control}).
These developments culminated with the monograph of Carmona and Delarue \cite{carmona2018probabilistic}, where a systematic treatment is made (under reasonable assumptions).
Other related works include \cite{bonnans2019schauder,albi2017mean,buckdahn2016stochastic,dos2019freidlin}.
These results fail however to encompass \eqref{state}--\eqref{sigma_FHN}, due for instance to the lack of Lipshitz property for the drift coefficient.

From the analytic point of view, the FitzHugh-Nagumo model also suffers the fact that the diffusion matrix is degenerate, making difficult to obtain energy estimates for the Kolmogorov equation (see Remark \ref{rem:FP}).



Our objective in this work is twofold.
At first, our purpose is to extract some of the qualitative features of FitzHugh-Nagumo system and its mean field limit, in a broader treatment that encloses \eqref{FHN} and \eqref{state}--\eqref{sigma_FHN}. In this sense, our intention is not to deal with the previous models ``as such'' but instead, we aim to take a step further by dealing with a certain class of equations that possess the following attributes:
\begin{itemize}
	\item (Monotonicity) -- though the drift coefficient in \eqref{state} displays a cubic non-linearity, it satisfies the monotonicity condition $\langle x-x',b(t,x,\mu,\alpha)-b(t,x',\mu,\alpha)\rangle\lesssim |x-x'|^2$.
	\item (Constrained dynamics) -- the dynamics of the coupling variable ensures that the convex constraint $y_t\in[0,1]$ holds for all times.
	\item (Interaction with quadratic dependence on the unknown) -- In spite of the order 1 type interaction in \eqref{b_FHN}-\eqref{sigma_FHN} (in the sense of \cite[p.~134]{carmona2013control}), the corresponding nonlinearity displays the quadratic behaviour $|b(t,x,\mu,\alpha)-b(t,x,\nu,\alpha)|\lesssim (1+|x|^2)W_2(\mu,\nu)$.
\end{itemize}
Under the above setting, we aim to develop and implement direct variational methods, in the spirit of the stochastic approach of Yong and Zhou \cite{yong1999stochastic} for classical control problems (note that some work in this direction has been already done by Pfeiffer \cite{pfeiffer2015optimality,pfeiffer2017numerical}, in a slightly different setting). 
Second, we aim to derive a Pontryagin maximum principle for mean-field type control problems of the previous form, with a view towards efficient numerical approximations of optimal controls (e.g.\ gradient descent).

\subsection*{Organization of the paper}
In Section \ref{sec:preliminaries} we introduce our assumptions on the coefficients and give the main results.
Section \ref{sec:well} is devoted to the well-posedness of the main optimal control problem (Theorem \ref{thm:ex}). In Section \ref{sec:max}, we show the corresponding maximum principle (Theorem \ref{thm:max}).
Finally, Section \ref{sec:numerics} will be devoted to numerical examples.

\section{Preliminaries}
\label{sec:preliminaries}
\subsection{Notation and settings}
In the whole manuscript, we consider an arbitrary but finite time horizon $T>0$. 
We fix a dimension $d\geq 1$, and denote the scalar product in $\R^d$ by $\langle\cdot,\cdot \rangle.$
If $A,B$ are matrices of the same size, we shall also write $\langle A, B\rangle$ for their scalar product, namely
\[
\langle A,B\rangle:= \mathrm{tr}(A^{\dagger}B)
\]
where $A^\dagger$ is the transposed matrix, and $\mathrm{tr}$ the trace operator.
For a continuously differentiable function $f\colon \R^d\to \R$, we adopt the suggestive notation $f_x$ to denote its Jacobian (seen for each $x\in \R^d$ as an element of the dual of $\R^d$). Given $h\in \R^d$, we let 
\begin{equation}\label{differential}
f_x(x)\cdot h
\end{equation}
be the evaluation of $f_x(x)$ at $h.$  A similar convention will be used for vector-valued functions.

Throughout the paper, we fix a complete filtered probability space $(\Omega,\mathcal{F},(\mathcal{F}_t)_{t\in [0,T]},\p)$ carrying an $m$-dimensional Wiener process $(W_t)_{t\in [0,T]}$. 
Given $p\in[1,\infty)$ and a $p$-integrable random variable $X$, we denote its usual $L^p$-norm by $\|X\|_p:=\e(|X|^p)^{1/p}$.
We further introduce the spaces
\begin{align*}
\mathcal{H}^{2,d}&:=\bigg\lbrace Z:\Omega\times [0,T]\rightarrow \R^d\,\bigg|\, Z\text{ prog.\ measurable and  }\int_{0}^{T}\|Z_t\|_2^2dt<\infty\bigg\rbrace
\\
\mathcal{S}^{2,d}&:=\bigg\lbrace Z:\Omega\times [0,T]\rightarrow \R^d\,\bigg|\, Z\text{ prog.\ measurable, continuous and  }\Big\|\sup_{t\in [0,T]}|Z_t|\Big\|^2_2<\infty\bigg\rbrace.
\end{align*}
For $m\in\mathbb N$, the notations $\mathcal{S}^{2,d\times m}$, $\mathcal{H}^{2,d\times m}$ will also be used to denote the corresponding sets of $d\times m$ matrix-valued processes.
Whenever clear from the context, we will omit to indicate dimensions and write $\mathcal{S}^2$ or $\mathcal{H}^2$ instead.

We will denote by $\mathcal{P}(\R^d)$ the set of all probability measures on $(\R^d,\mathcal{B}(\R^d))$. For $p\in[1,\infty)$, $\mu\in \mathcal{P}(\R^d)$ we define the moment of order $p$:
\begin{align*}
\mathcal{M}_p(\mu)^p:=\int_{\R^d}|x|^p\mu(dx)\in [0,\infty],
\end{align*}
and we let $\mathcal{P}_p(\R^d):=\left\lbrace \mu\in \mathcal{P}(\R^d)\,\big|\,\mathcal M_p(\mu)<\infty\right\rbrace.$
By $W_p,$ $p\in[1,\infty),$ we denote the usual $p$-Wasserstein distance on $\mathcal P_p$, that is for $\mu,\nu\in\mathcal P_p(\R^d)$
\begin{equation}\label{def:wasserstein}
W_p(\mu,\nu)^p:=\inf _{\pi\in\Pi(\mu,\nu)}\iint_{\R^d\times\R^d}|x-y|_{\R^d}^p\pi(dx\times dy),
\end{equation}
where $\Pi(\mu,\nu)$ denotes the set of probability measures on $\R^d\times\R^d$ with $\mu$ and $\nu$ as respective first and second marginals (we refer to \cite[Chap.~5]{carmona2018probabilistic} for a thorough introduction to the subject).
Moreover, we recall the following elementary but useful consequence of the previous definition.
Let $\mu,\nu$ be in $\mathcal P_p,$ and assume that there are random variables $X,Y$ on $(\Omega,\mathcal F,\p)$ such that $X\sim \mu$ and $Y\sim\nu.$ Then, it holds 
\begin{equation}\label{elementary_W2}
W_p(\mu,\nu)\leq \e\left(|X-Y|^p\right)^\frac{1}{p}.
\end{equation}
Finally, whenever $f\colon \mathcal P_2\to \R$ is continuously L-differentiable at some $\mu\in\mathcal P_2$,
we write $f_\mu(\mu)(x)$ to denote its Lions derivative at the point $(\mu,x)\in \mathcal P_2\times\R^d$. 
In keeping with the notation \eqref{differential} on differentials, we will let
$f_\mu(\nu)(x)\cdot h$ be its evaluation (as an element of the dual of $\R^d$) at $h\in\R^d$.

\subsection{Controlled dynamics and cost functional}
\label{sec:cost}
Our controlled dynamics will be given by a McKean-Vlasov type SDE (state equation) of the form \eqref{state}, where $X_0\in L^r(\Omega,\mathcal{F}_0,\p;\R^d)$ for some fixed $r\geq 6$ and $\alpha$ is an admissible control, i.e.\ for some convex set $A\subset \R^k$ and some constant $K>0$ fixed throughout the paper, 
\begin{align}
\label{adm_cont}
\alpha\in \A:= \left \{\alpha:[0,T]\rightarrow A\enskip\Bigg|\enskip\int_{0}^{T}|\alpha(t)|^rdt\leq K \right \}.
\end{align}
In the whole manuscript, we assume that we are given continuous running and terminal cost functions
 \begin{align*}
 f&\colon[0,T]\times \R^d\times \mathcal{P}_2(\R^d)\times A\rightarrow \R\\
 g&\colon\R^d\times \mathcal{P}_2(\R^d)\rightarrow \R
 \end{align*}
which have quadratic growth in the following sense:
there exists $C>0$ such that for all $t\in [0,T]$, $x\in \R^d$, $\alpha\in A$ and $\mu\in \mathcal{P}_2(\R^d)$
	\begin{align*}
	|f(t,x,\mu,\alpha)|&\leq C(1+|x|+ \mathcal{M}_2(\mu)+|\alpha|)^2\\
	|g(x,\mu)|&\leq C(1+|x|+\mathcal{M}_2(\mu)))^2.
	\end{align*}
We will then consider the cost functional 
\begin{nalign}
	J\colon\A\rightarrow \R,\quad\alpha\mapsto \e\left(\int_{0}^{T}f(t,X_t^\alpha,\LL(X_t^\alpha),\alpha_t)dt+g(X_T^\alpha,\LL(X_T^\alpha))\right).
\end{nalign}

\subsection{Level set constraint}
\label{sec:level}

A formal application of It\^o Formula reveals that the constraint
\[\C:= \left \{x=(v,w,y):0\leq y\leq 1\right \} \]
 is preserved along the flow of the state equation associated with a network of FitzHugh-Nagumo neurons. This is of course coherent with the intuition that $y$ is a fraction of open channels. In other words, we have $\pi(X)\leq 0$ where $\pi\colon \R^3\to \R,$ is the map $x\mapsto y(y-1).$ 
Motivated by this example, we will assume in the sequel that we are given a convex function $\pi\in C^2(\R^d,\R)$ such that any solution $X$ is supported in $\C\subset\R^d$ for all times, where $\C$ is the set
\begin{equation}\label{convex_set}
\C:=\left \{x\in \R^d:\pi(x)\leq 0\right \}.
\end{equation}
We suppose moreover that $\C$ contains at least one element, which for convenience is assumed to be $0$.
To ensure that the constraint is preserved, we need to assume that $\pi(X_0)\leq 0$, $\p\text{-almost surely}$. Furthermore we need to make the following compatibility assumptions on $\pi\colon \R^d\to \R.$
\begin{assumption}[constrained dynamics]
\label{H_pi}
For all $\mu\in \mathcal{P}(\R^d),\alpha\in A,$ $t\in [0,T]$ and $x\in \R^d\setminus\C$,
we have 
\begin{align}
\label{pi_b}
&\pi_x(x)\cdot b(t,x,\mu,\alpha)\leq 0,
\intertext{while}
\label{pi_sigma}
&\mathrm{Im}\left (\sigma(t,x,\mu,\alpha)\right )\subset 
\pi_x(x)^{\perp}
\quad \text{and}\quad 
\pi_{xx}(x)\cdot(\sigma\sigma^\dagger(t,x,\mu,\alpha)) = 0.
\end{align}
\end{assumption}
 
\begin{example}[Gating variable constraint for FitzHugh-Nagumo]
Assumption \ref{H_pi} is fulfilled for \eqref{state}--\eqref{sigma_FHN} and with $\pi(v,w,y)=y(y-1)$, as can be seen as follows. We have the identities (recall the notation \eqref{sigma_y})
\[
\begin{aligned}
&\pi_x (x)= 
	\begin{pmatrix}
	0 & 0 & 2y-1
	\end{pmatrix},\quad 
\pi_{xx}(x)=
	\begin{pmatrix}
	0 &0 & 0
	\\
	0 & 0 & 0
	\\
	0 & 0 & 2
	\end{pmatrix},
\\
&
\sigma\sigma^{\dagger}(t,x,\mu,\alpha)= 
	\begin{pmatrix}
\sigma_{ext}^2 + (\sigma^J)^2(v-V_{rev})^2(\int_{\R^3}z_3\mu(dz))^2 
&0 & 0
\\
0 & 0 & 0
\\
0 & 0 & \sigma^y(v)^2
\end{pmatrix},
\end{aligned}
\]
Clearly, \[
\pi_x(x)\sigma(t,x,\mu,\alpha) \in \mathrm{Linspan}\left \{\begin{pmatrix}0 & 0 & (2y-1)\sigma^y(v)\end{pmatrix}\right \}\,.
\]
 But using $\mathrm{Supp}\chi\subset(0,1)$, we find indeed that
$(2y-1)\sigma^y(v)=0$ outside $\C$. The same argument implies
\[
\pi_{xx}(x)\cdot (\sigma\sigma^\dagger(t,x,\mu,\alpha)) = 2\sigma^y(v)^2
\]
and the latter vanishes if $x\notin\C$, hence \eqref{pi_sigma}.

Towards \eqref{pi_b}, one observes letting $q=\bar a S(v)$ that
\[
\pi_x(x)\cdot b(t,x,\mu,\alpha)
= -q +(3q + \bar b)y -2(q+\bar b)y^2 = P(y)\,.
\]
The polynomial $P(y)$ has discriminant $(q-b)^2$, hence the roots
\[
r_- = \frac{q}{q+\bar b}\,,\quad r_+=\frac12\,,
\]
which both lie in the interval $(0,1)$. It follows that $P(y)$ is negative outside $\C$, implying \eqref{pi_b}.
\end{example}

\subsection{Regularity assumptions and main results}

Besides Assumption \ref{H_pi}, one needs to make suitable hypotheses on the regularity of the drift and diffusion coefficients.
In the sequel, we denote by $\mathcal P_2^{\C}(\R^d)$ the subset of all probability measures in $\mathcal P_2(\R^d)$ which are supported in $\C=\pi^{-1}((-\infty,0]).$

\begin{assumption}[MKV Regularity] 
	\label{ass:mkv}
	We assume that the coefficients
	\begin{align*}
	(b,\sigma):[0,T]\times \R^d\times \mathcal{P}_2(\R^d)\times A\rightarrow \R^d\times \R^{d\times m}
	\end{align*}
	 are locally Lipshitz. Moreover, there are constants $L_1,L_2,L_3>0$ such that the following properties hold.
\begin{enumerate}[label=(L\arabic*)]
\item\label{L1} -- (regularity of the diffusion coefficient) --
	The diffusion coefficient $\sigma$ satisfies the property
	$\sup_{0\leq t\leq T}|\sigma(t,0,\delta_0,0)|^2<\infty.$
Moreover, for all $t\in [0,T]$, $x\in \R^d$, $\alpha\in A$ and $\mu\in \mathcal{P}^{\C}_2(\R^d)$ we have
\begin{equation}\label{hyp:sigma1}
 |\sigma(t,x,\mu,\alpha)|^2\leq L_1(1+|\alpha|^2+|x|^2) 
\end{equation}

If $x,x'\in \R^d $, $\alpha'\in A$, then
\begin{equation}\label{hyp:sigma2}
|\sigma(t,x,\mu,\alpha)-\sigma(t,x',\mu,\alpha')|^2\leq L_1(|x-x'|^2+|\alpha-\alpha'|^2).
\end{equation}
Finally, if $\in \R^d$ and $\mu'\in \mathcal{P}_2(\R^d)$, then
\begin{equation}\label{hyp:wass_sigma}
|\sigma(t,x,\mu,\alpha)-\sigma(t,x,\mu',\alpha)|^2\leq L_1(1+|x|^2)W_2(\mu,\mu')^2. 
\end{equation}
	
	\item\label{L2} -- (regularity of the drift coefficient) -- There exists $q\in \mathbb{N}$ with $4q\leq r$, such that for all $t\in [0,T]$, $x,x'\in \R^d$, $\alpha,\alpha'\in A$ and $\mu\in \mathcal{P}_2(\R^d)$
	\begin{equation}\label{hyp:loc_lip}
	\begin{aligned}
	&|b(t,x,\mu,\alpha)-b(t,x',\mu,\alpha')|
	\\
	&\leq \sqrt{L_2}(1+|x|^{q-1}+|x'|^{q-1}+|\alpha|^{q-1}+|\alpha'|^{q-1}+\mathcal{M}_2(\mu)^2)(|x-x'|+|\alpha-\alpha'|).
	\end{aligned}
	\end{equation}
	In addition, $b$ satisfies the following Lisphitz property with respect to the Wasserstein distance: 
	for all $t\in [0,T]$, $x\in \R^d$, $\alpha\in A$ and $\mu,\mu'\in \mathcal{P}_2(\R^d)$
	\begin{equation}\label{hyp:wass_b}
	|b(t,x,\mu,\alpha)-b(t,x,\mu',\alpha)|^2\leq L_2(1+|x|^2)W_2(\mu,\mu')^2. 
	\end{equation}
	
	\item\label{L3} -- (monotonicity of the drift) --
	The drift coefficient $b$ is such that 
	$\sup_{0\leq t\leq T}|b(t,0,\delta_0,0)|<\infty.$
	Moreover, for all $t\in [0,T]$, $x\in \C$, $\alpha\in A$ and $\mu\in \mathcal{P}^{\C}_2(\R^d)$
	it holds
	\begin{equation}\label{hyp:mono1}
	\langle x,b(t,x,\mu,\alpha)\rangle \leq L_3(1+|\alpha|^2+|x|^2)
	\end{equation}
	and if $x'\in \C$, $\alpha'\in \A$, then
	\begin{equation}\label{hyp:mono2}
	\left \langle x-x',b(t,x,\mu,\alpha)-b(t,x',\mu,\alpha')\right \rangle 
	\leq L_3(|x-x'|^2+|\alpha-\alpha'|^2).
	\end{equation}
\end{enumerate}
\end{assumption}

\begin{example}[Analysis of the FitzHugh-Nagumo model]\label{exa:FHN}
	Let us go back to the settings of \eqref{state}--\eqref{sigma_FHN} for a coupled system of FitzHugh-Nagumo neurons.
	Trivially, one has $\sup_{0\leq t\leq T}|\sigma(t,0,\delta_0,0)|=|\sigma_{ext}|<\infty.$
	The map $v\mapsto S(v)$ being positive and bounded, we further see that the $(3,3)$-th entry of $\sigma$ is Lipshitz, as deduced immediately from the fact that $\chi$ is supported in $(0,1).$ For the remaining non-trivial component, we have
	\[
	\sigma^{1,2}(x,\mu,\alpha)^2 
	\leq J(V_{rev} + |v|^2) |\beta(\mu)| 
	 \]
	where to ease notation we introduce the barycenter $\beta(\mu)$, defined as the quantity
	 \begin{equation}\label{barycenter}
	 \beta(\mu):=\int_{\R^3}z_3\mu(dz_1\times dz_2\times dz_3).
	 \end{equation}
	The condition $\mathrm{Supp}\mu\subset \C,$ implies trivially that $|\beta(\mu)|\leq 1$ and thus we obtain \eqref{hyp:sigma1} for $L_1=(V_{rev}J)\vee 1.$ The Lipshitz-type property \eqref{hyp:sigma2} is shown in a similar fashion.

The Wasserstein-type regularity \eqref{hyp:wass_sigma} is hardly more problematic: using the Kantorovitch duality Theorem \cite[Prop.~5.3 \& Cor.~5.4]{carmona2018probabilistic} and the fact that the projector $z=(z_1,z_2,z_3)\mapsto z_3$ is Lipshitz, one finds that
\begin{equation}\label{estim:beta}
|\beta(\mu-\mu')|=|\int_{\R^3}z_3(\mu-\mu')(dz)|\leq W_1(\mu,\mu').
\end{equation}
hence
\[
|\sigma(x,\mu) - \sigma (x,\mu') |
\leq J|v-V_{rev}|W_1(\mu,\mu').
\]
As is classical, the $1$-Wasserstein distance $W_1(\mu,\mu')$ can be estimated by $W_2(\mu,\mu'),$ which in turn implies \eqref{hyp:wass_sigma}, and thus \ref{L1}.

As for the drift coefficient, since $b(t,0,\delta_0,0)$ is also independent of $t$, the supremum condition in \ref{L3} is clear. Moreover, it has polynomial dependency on the variables $v,w,y$, which implies the local Lipshitz property \eqref{hyp:loc_lip} with $q=3$.
We also have 
\begin{align*}
|b(t,x,\mu,\alpha)-b(t,x,\mu',\alpha)|&\leq J|v-V_{rev}||\beta(\mu-\mu')|
\end{align*}
and we conclude by \eqref{estim:beta} that \ref{L2} holds.

To show \eqref{hyp:mono1} and \eqref{hyp:mono2}, it is enough to prove the corresponding bounds when
$c=0=\overline b ,$
since the related contributions are affine linear in the variables. Similarly, by linearity we can let $w=\alpha=0$.
But in that case, it holds
\begin{align*}
\langle x,b(t,x,\mu,0)\rangle 
\leq v^2-\dfrac{v^4}{3} +\overline{a}S(v)(1-y)y -Jv^2\beta(\mu)+JV_{rev}v\beta(\mu)\,.
\end{align*}
Observe that, since $\mu$ is supported inside $\C$, one has in particular
$\beta(\mu)\geq0$. Consequently, the fourth term in the right hand side can be ignored,
showing \eqref{hyp:mono1} with $L_3=L_3(\overline a ,|S|_{\infty},J,V_{rev})>0.$

Similarly, if $x'=(v',0,y')\in\R^3$
\begin{align*}
&\langle x-x',b(t,x,\mu,0)-b(t,x',\mu,0)\rangle
\\
&= (v-v')^2-\frac13(v^3-v'^3)(v-v')
-J(v-v')^2\beta(\mu)
\\
&\quad \quad \quad +\overline{a}(1-y)(y-y')(S(v)-S(v'))-\overline{a}S(v')(y-y')^2
\\
&\leq |S'|_{\infty}(1\vee \overline a)(1+y^2)(|y-y'|^2 + |v-v'|^2)\,.
\end{align*}
It follows that \eqref{hyp:mono1} holds with  $L_3=L_3(\overline a,\overline b,c,|S|_{C^1})>0.$
\end{example}

\begin{assumption}[Weak continuity]
	\label{ass:weak}
	For any $t\in [0,T]$, $x\in \R^d$ and $\mu\in \mathcal{P}_2(\R^d)$, the functions
	$A\to \R^d\times \R^{d\times m}\times \R$,
	$\alpha\mapsto (b,\sigma,f)(t,x,\mu,\alpha)$
	are convex.
	Furthermore, for all $x\in C([0,T];\R^d)$ and $\mu\in C([0,T];\mathcal{P}_2^\C (\R^d))$ the functions
	\begin{align*}
	\mathbb{A}\rightarrow L^2([0,T];\R^d),\quad \alpha\mapsto  b(\cdot,x_\cdot,\mu_\cdot,\alpha_\cdot),\\
	\mathbb{A}\rightarrow L^2([0,T];\R^{d\times m}),\quad\alpha\mapsto \sigma(\cdot,x_\cdot,\mu_\cdot,\alpha_\cdot),
	\end{align*}
	are weakly sequential continuous. 	
\end{assumption}

\begin{remark}\label{rem:lower}
	The continuity and convexity of $f(t,x,\mu,\cdot)$ leads to weak lower semicontinuity of the map
	\begin{align*}
	\mathbb{A}\rightarrow \R,\quad \alpha\mapsto \int_{0}^{T}f(t,x_t,\mu_t,\alpha_t)dt,
	\end{align*} 
	for all $x\in C([0,T];\R^d)$ and $\mu\in C([0,T];\mathcal{P}_2(\R^d))$.
\end{remark}

We can now present our main results.
At first, we investigate the existence of an optimal control for the following problem
\begin{align}\label{SM}\tag{SM}
\min_{\alpha\in \A }J(\alpha),
\end{align}
subject to 
\begin{equation}\label{eqa}
\left \{\begin{aligned}
	dX_t&=b(t,X_t,\LL(X_t),\alpha_t)dt+\sigma(t,X_t,\LL(X_t),\alpha_t)dW_t\,, & t\in [0,T],
	\\
	X_0&\in L^r(\Omega,\mathcal{F}_0,\p;\R^d).
\end{aligned}\right .
\end{equation}

\begin{theorem}\label{thm:ex}
	Under assumptions \ref{H_pi}--\ref{ass:weak}, the problem \eqref{SM} is finite and has an optimal control. Namely, $\inf_{\alpha\in \A }J(\alpha)<\infty$ and there is $\overline{\alpha}\in\A $, such that 
	\begin{align*}
	J(\overline\alpha)\leq J(\alpha),
	\end{align*}
	for all $\alpha\in \A $.
\end{theorem}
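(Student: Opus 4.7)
The plan is to prove existence by the direct method of the calculus of variations, combined with a tightness/martingale argument to identify the limit of the state equations associated with a minimizing sequence. First, I would verify that \eqref{SM} is well-posed: for every $\alpha\in\mathcal A$, the monotonicity and local-Lipschitz estimates \ref{L2}--\ref{L3} together with the bound on $\alpha$ yield the existence of a unique strong solution $X^\alpha\in\mathcal S^{2,d}$ (by a standard cutoff/monotone-SDE argument, propagating into the Wasserstein coupling via \eqref{hyp:wass_b} and \eqref{hyp:wass_sigma} through a Picard-type fixed point in $\mathcal P_2$). Since $X_0\in L^r$ with $r\ge 4q$ and $|\alpha|$ is in $L^r([0,T])$, an It\^o expansion with \eqref{hyp:mono1} gives uniform $L^r$-estimates of $\sup_{t\le T}|X^\alpha_t|$ in terms of $K$ alone. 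Then, together with the quadratic growth of $f$ and $g$, this yields $J(\alpha)<\infty$, and in fact $J$ is bounded below on $\mathcal A$.

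Next I would pick a minimizing sequence $(\alpha^n)_n\subset\mathcal A$ with $J(\alpha^n)\downarrow\inf_{\mathcal A}J$. Since $\|\alpha^n\|_{L^r([0,T];\R^k)}\le K^{1/r}$, there is a subsequence (not relabeled) and $\bar\alpha\in L^r$ with $\alpha^n\rightharpoonup \bar\alpha$; convexity and closedness of $A$, plus Mazur's lemma, ensure $\bar\alpha\in\mathcal A$. Let $X^n:=X^{\alpha^n}$ and $\mu^n_t:=\mathcal L(X^n_t)$. From the uniform $L^r$-bound combined with the Burkholder--Davis--Gundy inequality applied to \eqref{eqa} (using \eqref{hyp:sigma1}--\eqref{hyp:loc_lip}), I obtain an Aldous-type tightness estimate on $\mathcal L(X^n)$ on $C([0,T];\R^d)$, and hence, via Skorokhod's representation theorem, a new probability space supporting $\tilde X^n\to \tilde X$ a.s.\ on path space, with $\mathcal L(\tilde X^n)=\mathcal L(X^n)$. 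Since Assumption \ref{H_pi} is preserved in the limit (the set $\mathcal P^{\mathcal C}_2$ is closed under weak convergence with second moments), the limiting flow of marginals lives in $C([0,T];\mathcal P^{\mathcal C}_2(\R^d))$.

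The central step, and the one I expect to be hardest, is to identify $\tilde X$ as a solution to \eqref{eqa} driven by $\bar\alpha$. I would formulate \eqref{eqa} as a martingale problem: for each test function $\varphi\in C^2_c$,
\[
M^{\varphi,n}_t:=\varphi(X^n_t)-\varphi(X^n_0)-\int_0^t\mathcal L^{\alpha^n}_s\varphi(X^n_s)\,ds
\]
is a martingale, where $\mathcal L^{\alpha}_s$ is the Kolmogorov generator associated with $(b,\sigma)$ evaluated at $(s,\cdot,\mu^n_s,\alpha^n_s)$. To pass to the limit in the drift and diffusion part, one has to decouple the state-dependence (handled by a.s.\ convergence plus uniform integrability coming from the $L^r$-bounds) from the control-dependence (handled by the weak sequential continuity of $\alpha\mapsto b(\cdot,x_\cdot,\mu_\cdot,\alpha_\cdot)$ and likewise for $\sigma$ from Assumption \ref{ass:weak}). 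The measure-argument is treated by observing that $W_2(\mu^n_t,\bar\mu_t)\to 0$ uniformly in $t$ thanks to uniform moment bounds plus convergence of the marginals, and then \eqref{hyp:wass_b}--\eqref{hyp:wass_sigma} transfer this to the coefficients. This identifies $\tilde X$ as a solution to the state equation with control $\bar\alpha$, and by the well-posedness proved in the first step, $\tilde X=X^{\bar\alpha}$ in law.

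Finally, I would establish lower semicontinuity of $J$ along the minimizing sequence. The terminal term $\mathbb E\,g(X^n_T,\mu^n_T)$ converges to $\mathbb E\,g(X^{\bar\alpha}_T,\bar\mu_T)$ by continuity of $g$, the a.s.\ convergence on the Skorokhod space, and uniform integrability from the $L^r$-moments together with the quadratic growth bound on $g$. The running-cost term is handled via Remark \ref{rem:lower}: the map $\alpha\mapsto\int_0^T f(t,x_t,\mu_t,\alpha_t)\,dt$ is weakly lower semicontinuous on $\mathbb A$ for fixed $(x,\mu)$, and combining this with the strong convergence of $(X^n,\mu^n)\to(X^{\bar\alpha},\bar\mu)$ yields
\[
J(\bar\alpha)\le\liminf_{n\to\infty}J(\alpha^n)=\inf_{\mathcal A}J,
\]
so $\bar\alpha$ is optimal. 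The main technical obstacle is this simultaneous passage to the limit in the measure argument and in the weakly convergent control, which is precisely why the combined convexity/weak-continuity package in Assumption \ref{ass:weak} is built into the hypothesis.
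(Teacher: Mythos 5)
Your proposal is correct and follows the same overall architecture as the paper: the direct method with a minimizing sequence, weak $L^2$-compactness of the controls via Banach--Alaoglu and convexity of $\A$, tightness and Skorokhod representation for the states, a martingale-type identification of the limit, and weak lower semicontinuity of $J$ from Assumption \ref{ass:weak} and Remark \ref{rem:lower}. The one genuine divergence is in the identification step, which you both rightly single out as the crux. The paper works with the explicit semimartingale decomposition $X^n=X_0+B^n+M^n$: it identifies $\overline B$ as $\int_0^\cdot b(s,\overline X_s,\LL(\overline X_s),\alpha_s)\,ds$ through weak $L^2(\overline\Omega)$ limits combined with the Banach--Saks theorem, shows that $\overline M$ is a martingale with quadratic variation $\int_0^\cdot\sigma\sigma^\dagger\,ds$, and then invokes the martingale representation theorem on an extended space to manufacture a driving Brownian motion. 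You instead phrase the identification as a Stroock--Varadhan martingale problem with $C^2_c$ test functions and then appeal to uniqueness in law (which here requires a Yamada--Watanabe-type argument, since only pathwise uniqueness is furnished by the contraction in Theorem \ref{thm:state}). The two devices are essentially equivalent in content; the martingale-problem route hides the representation theorem inside the equivalence between martingale problems and weak solutions, at the price of having to pass to the limit in the full generator, i.e.\ in $\sigma\sigma^\dagger$ rather than in $\sigma$ --- note that Assumption \ref{ass:weak} only postulates weak sequential continuity of $\alpha\mapsto\sigma$, so the weak convergence of the quadratic term needs the same extra argument the paper itself compresses into ``similar arguments as above'' in its Step 3. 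Neither route is a gap; your decoupling of the strongly convergent state/measure arguments from the weakly convergent control is exactly the mechanism the paper uses.
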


In order to address the corresponding maximum principle, we now introduce further assumptions on our coefficients.

\begin{assumption}[Pontryagin Principle]
	\label{ass:pontryagin}
	 The coefficients $b,\sigma,f$ and $g$ are continuously differentiable with respect to $(x,\alpha)$ 
	 and continuously $\mathrm{L}$-differentiable with respect to $\mu\in \mathcal P_2(\R^d).$
	 Furthermore there exist $A_1,A_2,A_3>0$ such that:
	\begin{enumerate}[label=(A\arabic*)]
		\item\label{A1} For every $(s,x,\mu ,\alpha )\in [0,T]\times \C\times \mathcal P_2^\C (\R^d)\times A,$ and each $y,z\in \R^d$:
		\begin{align*}
		\langle b_x(t,x,\mu ,\alpha )\cdot z,z\rangle&\leq A_1|z|^2,\\
		|b_x(t,x,\mu,\alpha)|&\leq A_1(1+|x|^{q-1}),\\
		|b_\alpha(t,x,\mu,\alpha)|&\leq A_1,\\
		|b_\mu(t,x,\mu,\alpha)(y)|&\leq A_1(1+|x|),
		\end{align*}
		where $q$ is the same constant as in \ref{L2}.
		\item\label{A2} For every $(s,x,\mu ,\alpha )\in [0,T]\times \C\times \mathcal P_2^\C (\R^d)\times A,$ and $y\in \R^d$:
		\begin{align*}
		|\sigma_x(t,x,\mu,\alpha)|&\leq A_2,\\
		|\sigma_\alpha(t,x,\mu,\alpha)|&\leq A_2,\\
		|\sigma_\mu(t,x,\mu,\alpha)(y)|&\leq A_2(1+|x|).
		\end{align*}
		\item\label{A3} For all $R>0$ and every $(s,x,\mu ,\alpha )\in [0,T]\times \R ^d\times \mathcal P_2 (\R^d)\times A,$ such that 
		$|x|\vee\mathcal{M}_2(\mu)\vee|\alpha|\leq R$
		the quantities
		\[
		f_x(t,x,\mu,\alpha),\,
		f_\alpha(t,x,\mu,\alpha),\,
		g_x(x,\mu),\,
		\int_{\R ^d}|f_\mu(t,x,\mu,\alpha)(y)|^2\mu(dy),\,
		\int_{\R ^d}^{}|g_\mu(x,\mu)(y)|^2\mu(dy),\,
		\]
are all bounded in norm by $A_3(1+R)$.
	\end{enumerate}
\end{assumption}

\begin{example}
	\label{exa:A}
	Again, we investigate the above properties for the setting of a FitzHugh-Nagumo neural network. The property \ref{A3} depends on the choice of $f$ and $g$, hence we do not discuss it here (it is however clear for the ansatz \eqref{choice:cost} below).
	Concerning assumption \ref{A1} and \ref{A2} we have 
	\begin{align*}
	b_x(t,x,\mu,\alpha)=\begin{pmatrix}
	1-v^2-J\beta(\mu) & 1 & 0\\
	c & -cb & 0\\
	\overline{a}S'(v)(1-y) & 0 & -\overline{a}S(v)-\overline{b}
	\end{pmatrix}
	\end{align*}
	where we recall the notation \eqref{barycenter}.
	Using that $\mathrm{Supp}(\mu)\subset\C$, together with the boundedness of $S'(v)$, this leads to
	\begin{align*}
	\langle b_x(t,x,\mu,\alpha)\cdot z,z\rangle \leq A_1(b,c,\overline{a},\overline{b},|S|_\infty,|S'|_{\infty})|z|^2,
	\end{align*}
	hence the first estimate.
	Letting as before $\beta(\mu):=\int_{\R^3}z_3\mu(dz),$
	 it is easily seen by definition of the L-derivative that 
	 \[ 
	 \beta_\mu(\mu)(\tilde x)\cdot h= h_3\quad \text{for all}\enskip \tilde x\enskip\text{and}\enskip h\equiv(h_1,h_2,h_3)\in\R^3 .
	 \] 
	 In a matrix representation, this gives the following constant value for the L-derivative of the drift coefficient at a given point $x\equiv(v,w,y)\in\R^3$ 
	\begin{align*}
	b_\mu(t,x,\mu,\alpha)(\tilde x)=\begin{pmatrix}
	0 & 0 & -J(v-V_{rev})\\
	0 & 0 & 0\\
	0 & 0 & 0
	\end{pmatrix},\quad \text{for all}\enskip\tilde x\in\R^3\,.
	\end{align*}
	Thus we have
	$|b_\mu(t,x,\mu,\alpha)(\tilde x)|\leq J\vee (JV_{rev})(1+|x|),$ showing the desired property.
\end{example}

Next, we introduce the corresponding adjoint equation, which will be essential for the maximum principle. For a solution $X\in \mathcal{S}^{2,d}$ of \eqref{eqa} consider the following backward SDE 
\begin{equation}
\label{eqb}
\left \{\begin{aligned}
dP_t&=-\Big\{\langle b_x(t,X_t,\LL(X_t),\alpha_t),P_t\rangle 
+\langle \sigma_x(t,X_t,\LL(X_t),\alpha_t),Q_t\rangle 
+f_x(t,X_t,\LL(X_t),\alpha_t)\\
&\quad -\tilde{\e}\left(\langle b_\mu(t,X_t,\LL(X_t),\alpha_t)(\tilde X_t),\tilde{P}_t\rangle +f_\mu(t,X_t,\LL(X_t),\alpha_t)(\tilde X_t)\right)\Big\}dt
+Q_tdW_t\\
P_T&=g_x(X_T,\LL(X_T))+\tilde{\e}\left(g_\mu(X_t,\LL(X_T))(\tilde{X}_T)\right),
\end{aligned}\right .
\end{equation}
where the tilde variables $\tilde X,\tilde P$ are independent copies of the corresponding random variables (carried on some arbitrary probability space $(\tilde\Omega,\mathcal{\tilde F},\mathbb{\tilde P})$), and $\mathbb{\tilde E}$ denotes integration in $\tilde\Omega$
(this convention will be adopted throughout the paper).
Herein, we recall that $\langle\sigma(t,x,\mu,\alpha),q\rangle$ is a synonym for 
$\text{tr}(\sigma(t,x,\mu,\alpha)^\dagger q)$.

A pair of processes $(P,Q)\in \mathcal{H}^{2,d}\times \mathcal{H}^{2,d\times m}$ will be called a solution to the adjoint equation corresponding to $X$ if it satisfies \eqref{eqb} for all $t\in [0,T]$, $\p$-almost surely.\smallskip

We are now in position to formulate the maximum principle. For that purpose, we introduce the Hamiltonian, which for each $x,y,p\in\R^d,$ $q\in\R^{d\times m}$ $\mu\in \mathcal P_2$ and $\alpha\in A,$ is the quantity
\begin{align*}
H(t,x,\mu,p,q,\alpha):=\langle b(t,x,\mu,\alpha),p\rangle 
+\langle \sigma(t,x,\mu,\alpha), q\rangle
+f(t,x,\mu,\alpha)\,.
\end{align*}

\begin{theorem}
	\label{thm:max}
	Let assumptions \ref{H_pi}--\ref{ass:pontryagin} hold. Let $\overline{\alpha}\in \A $ be an optimal control for the problem \eqref{SM}. If $(P,Q)\in \mathcal{H}^{2,d}\times \mathcal{H}^{2,d\times m}$ is the solution to the corresponding adjoint equation, then we have for Lebesgue-almost every $t\in [0,T]$
		\begin{align*}
	&\e\left(H(t,X_t,\LL(X_t),P_t,Q_t,\overline{\alpha}_t)\right)\leq \e\left(H(t,X_t,\LL(X_t),P_t,Q_t,\alpha)\right),
	\end{align*}
	for all $\alpha\in A$.
\end{theorem}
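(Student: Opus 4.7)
The plan is to follow a classical convex-perturbation argument for mean-field control (as in \cite{carmona2018probabilistic}), adapted to the present monotone, polynomial-growth setting. Fix an arbitrary $\alpha^0\in\A$ and, for $\epsilon\in(0,1]$, introduce the convex perturbation
\[
\bar\alpha^\epsilon := \bar\alpha + \epsilon(\alpha^0-\bar\alpha),
\]
which lies in $\A$ by convexity of $A$ and Jensen's inequality. Let $X^\epsilon$ denote the corresponding state and $X=X^{\bar\alpha}$. Optimality yields $J(\bar\alpha^\epsilon)\geq J(\bar\alpha)$, so the one-sided directional derivative of $J$ at $\bar\alpha$ in the admissible direction $\alpha^0-\bar\alpha$ must be nonnegative.

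Next, I introduce the variational process $V$, expected to equal $\partial_\epsilon|_{\epsilon=0}X^\epsilon$, as the solution of the linearized McKean--Vlasov SDE
\begin{equation*}
\begin{aligned}
dV_t &= \Big\{b_x(t,X_t,\LL(X_t),\bar\alpha_t)\cdot V_t + b_\alpha(t,X_t,\LL(X_t),\bar\alpha_t)\cdot(\alpha^0_t-\bar\alpha_t)\\
&\qquad+\tilde{\e}\bigl[b_\mu(t,X_t,\LL(X_t),\bar\alpha_t)(\tilde X_t)\cdot \tilde V_t\bigr]\Big\}dt\\
&\quad+\Big\{\sigma_x\cdot V_t+\sigma_\alpha\cdot(\alpha^0_t-\bar\alpha_t)+\tilde{\e}\bigl[\sigma_\mu(\tilde X_t)\cdot \tilde V_t\bigr]\Big\}dW_t,\qquad V_0=0.
\end{aligned}
\end{equation*}
Well-posedness in $\mathcal S^{2,d}$ follows from Assumption \ref{ass:pontryagin}: the quadratic form $\langle b_x\cdot z,z\rangle\leq A_1|z|^2$ plays the role of a monotonicity condition, the bound $|\sigma_x|\leq A_2$ provides Lipschitz control of the diffusion, and the terms with $b_\mu,\sigma_\mu$ contribute linearly in $\tilde V$ with $L^2(\tilde\Omega)$-bounded coefficients thanks to the moment bounds on $X$ (inherited from \ref{L3} and $X_0\in L^r$).

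The second ingredient is the convergence $(X^\epsilon-X)/\epsilon\to V$ in $\mathcal S^{2,d}$ as $\epsilon\downarrow 0$. This is the technical crux. Because $b$ is only locally Lipschitz with polynomial growth of order $q-1$, one cannot invoke a naive Gronwall argument. Instead, one applies It\^o's formula to $|X^\epsilon-X-\epsilon V|^2$ and exploits the monotonicity \eqref{hyp:mono2} to absorb the contribution of $b_x$, together with a Taylor expansion whose remainder is controlled by $(1+|X|^{q-1}+|X^\epsilon|^{q-1})|X^\epsilon-X|^2$. The uniform $L^r$-bound $\e\sup_t|X^\epsilon_t|^r<\infty$ (with $r\geq 4q$) guarantees that, after Cauchy--Schwarz, these remainders are $o(\epsilon^2)$, and Gronwall concludes. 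This is the step I expect to be the main obstacle, but it is essentially a refinement of the well-posedness proof of Theorem \ref{thm:ex}.

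With $V$ constructed, I pass to the limit $\epsilon\downarrow0$ in $\epsilon^{-1}(J(\bar\alpha^\epsilon)-J(\bar\alpha))\geq0$. The regularity hypotheses \ref{A3} and uniform integrability yield
\[
0\leq \e\!\int_0^T\!\bigl\{f_x\cdot V_t+f_\alpha\cdot(\alpha^0_t-\bar\alpha_t)+\tilde{\e}[f_\mu(\tilde X_t)\cdot\tilde V_t]\bigr\}dt+\e\bigl\{g_x\cdot V_T+\tilde{\e}[g_\mu(\tilde X_T)\cdot\tilde V_T]\bigr\}.
\]
The duality step is then standard: apply It\^o's formula to $t\mapsto\langle P_t,V_t\rangle$ using the adjoint BSDE \eqref{eqb}. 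By Fubini (to handle the tilde variables symmetrically), the $b_x,\sigma_x,b_\mu,\sigma_\mu,f_x,f_\mu$ terms cancel pairwise and the terminal $g_x,g_\mu$ match $P_T$, leaving the variational inequality
\[
0\leq \e\!\int_0^T\!\bigl\langle H_\alpha(t,X_t,\LL(X_t),P_t,Q_t,\bar\alpha_t),\,\alpha^0_t-\bar\alpha_t\bigr\rangle\,dt.
\]

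Finally, I upgrade to the pointwise Hamiltonian inequality. By Assumption \ref{ass:weak}, $b,\sigma,f$ are convex in $\alpha$, so $H(t,X_t,\LL(X_t),P_t,Q_t,\cdot)$ is convex in $\alpha$; in particular $H(\cdot,\alpha)-H(\cdot,\bar\alpha_t)\geq H_\alpha(\cdot,\bar\alpha_t)\cdot(\alpha-\bar\alpha_t)$. Testing the variational inequality against controls of the form $\alpha^0_t:=\bar\alpha_t\mathbf{1}_{[0,T]\setminus E}(t)+\alpha\,\mathbf{1}_E(t)$ for arbitrary measurable $E\subset[0,T]$ and arbitrary $\alpha\in A$ (these are admissible for $E$ of small enough measure by the $L^r$-constraint in \eqref{adm_cont}), and invoking Lebesgue's differentiation theorem, one concludes $\e[H(t,X_t,\LL(X_t),P_t,Q_t,\bar\alpha_t)]\leq\e[H(t,X_t,\LL(X_t),P_t,Q_t,\alpha)]$ for Lebesgue-a.e.\ $t\in[0,T]$ and every $\alpha\in A$, which is the desired conclusion.
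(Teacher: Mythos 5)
Your proposal is correct and follows essentially the same route as the paper: linearize the state equation via the variational process (the paper's Lemma \ref{lem:variation}), use duality with the adjoint BSDE to rewrite the nonnegative directional derivative of $J$ at $\overline\alpha$ as $\e\int_0^T H_\alpha\cdot(\alpha^0_t-\overline\alpha_t)\,dt\geq 0$, and then combine convexity of the Hamiltonian in $\alpha$ with perturbations supported on arbitrary measurable subsets of $[0,T]$ to localize in time. The only cosmetic difference is that you phrase the difference-quotient convergence for $(X^\epsilon-X)/\epsilon$ directly, whereas the paper estimates the discrepancy $\varDelta^\epsilon=(X^\epsilon-X)/\epsilon-Z^{\alpha,\beta}$; the substance is identical.
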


It should be noticed that in contrast to the maximum principle stated in \cite[Thm.~6.14 p.\ 548]{carmona2018probabilistic}, the maximum principle here is formulated in terms of the expectation for almost every $t\in [0,T]$ instead of $dt\otimes \p-$ almost everywhere, since we only consider deterministic controls and thus we only alter the control in deterministic directions.

\section{Well-Posedness of the Optimal Control Problem}
\label{sec:well}

The main purpose of this section is to prove the existence of an optimal control for the stated control problem. For that purpose, we will need to show (among other results) that the state equation \eqref{state} is well-posed, and that the solution satisfies uniform moment bounds up to a certain level. Hereafter, we suppose that assumptions \ref{H_pi}, \ref{ass:mkv} and \ref{ass:weak} are fulfilled.

\subsection{Well-posedness of the State equation}

Our first task is to show that the level-set constraint which was alluded to in Section \ref{sec:level} is preserved along the flow of solutions. This statement is contained the next result. The proof is partially adapted from that of \cite[Prop.~3.3]{bossy2015clarification}.

\begin{lemma}\label{lem:constraint}
	For every $\alpha\in \A $ and $\mu\in C([0,T];\mathcal{P}^{\C}_2(\R^d))$ we have that 
	\begin{align}
	\label{constraint_pi}
	\p\big(\pi(X_t^{\alpha,\mu}) \leq 0,\forall t\in [0,T]\big)=1
	\end{align}
	where $X_t^{\alpha,\mu}$ is the unique solution to 
	\begin{equation}
		\label{fixpointeq}
	\left \{\begin{aligned}
		dX_t
		&=b(t,X_t,\mu_t,\alpha_t)dt+\sigma(t,X_t,\mu_t,\alpha_t)dW_t, 
		& t\in [0,T]
		\\
		X_0
		&\in L^r(\Omega,\mathcal{F}_0,\p;\R^d).
	\end{aligned}\right .
	\end{equation}
\end{lemma}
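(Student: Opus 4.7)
The lemma has two components: existence/uniqueness of the ($\mu$-frozen) SDE \eqref{fixpointeq}, and the constraint preservation \eqref{constraint_pi}. For the first, with $\mu$ frozen the coefficients are locally Lipshitz with polynomial growth (via \eqref{hyp:loc_lip} and \eqref{hyp:sigma2}), so a standard truncation argument yields local strong existence and uniqueness up to an explosion time $\tau_\infty = \lim_{N\to\infty} \tau_N$, where $\tau_N := \inf\{t : |X_t| \geq N\}$. Non-explosion ($\tau_\infty \geq T$ a.s.) will follow from the Itô estimate $\e|X_t|^2 \lesssim 1 + \int_0^t \e|X_s|^2 ds$ furnished by \eqref{hyp:mono1} (which applies once $X_s \in \C$), together with \eqref{hyp:sigma1} and Gronwall. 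The two pieces therefore bootstrap on each other, so the argument below is carried out on $[0, t \wedge \tau_N]$ and then passed to the limit $N \to \infty$.

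For the constraint preservation I follow an Itô / smooth-approximation argument in the spirit of \cite{bossy2015clarification}. Choose a sequence of non-negative convex non-decreasing functions $\phi_\epsilon \in C^2(\R)$ satisfying $\phi_\epsilon(r) = 0$ for $r \leq 0$, with $\phi_\epsilon'$ and $\phi_\epsilon''$ bounded for each fixed $\epsilon > 0$, and $\phi_\epsilon(r) \nearrow r_+$ as $\epsilon \downarrow 0$. Writing $b_s = b(s, X_s, \mu_s, \alpha_s)$ and $\sigma_s = \sigma(s, X_s, \mu_s, \alpha_s)$, Itô's formula applied to $\phi_\epsilon \circ \pi \circ X$ on $[0, t \wedge \tau_N]$ gives
\begin{equation*}
\begin{aligned}
\phi_\epsilon(\pi(X_{t \wedge \tau_N})) &= \phi_\epsilon(\pi(X_0)) + \int_0^{t \wedge \tau_N} \phi_\epsilon'(\pi(X_s)) \bigl[\pi_x(X_s) \cdot b_s + \tfrac{1}{2} \pi_{xx}(X_s) \cdot (\sigma_s \sigma_s^\dagger)\bigr] ds \\
&\quad + \tfrac{1}{2} \int_0^{t \wedge \tau_N} \phi_\epsilon''(\pi(X_s))\, |\pi_x(X_s) \cdot \sigma_s|^2 ds + M_{t \wedge \tau_N},
\end{aligned}
\end{equation*}
where $M$ is the stochastic integral of $\phi_\epsilon'(\pi(X_s))\, \pi_x(X_s) \cdot \sigma_s$ against $W$.

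The key observation is that each integrand vanishes in the relevant region: on $\{\pi(X_s) \leq 0\}$ the factors $\phi_\epsilon'(\pi(X_s))$ and $\phi_\epsilon''(\pi(X_s))$ are zero, while on $\{\pi(X_s) > 0\}$ Assumption \ref{H_pi} provides $\pi_x(X_s) \cdot \sigma_s = 0$, $\pi_{xx}(X_s) \cdot (\sigma_s \sigma_s^\dagger) = 0$, and $\pi_x(X_s) \cdot b_s \leq 0$. Consequently the Itô correction vanishes identically, the stochastic integrand vanishes pointwise (so $M \equiv 0$), and the drift integrand is non-positive. Since $\pi(X_0) \leq 0$ $\p$-a.s.\ forces $\phi_\epsilon(\pi(X_0)) = 0$, we conclude $\phi_\epsilon(\pi(X_{t \wedge \tau_N})) = 0$ almost surely. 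Letting first $N \to \infty$ and then $\epsilon \downarrow 0$ by monotone convergence yields $(\pi(X_t))_+ = 0$ a.s.\ for each $t \in [0,T]$; continuity of the trajectories of $X$ combined with continuity of $\pi$ then upgrades this pointwise statement to the uniform version \eqref{constraint_pi}.

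The main technical obstacle is the interplay between non-explosion and constraint preservation, since \eqref{hyp:mono1} is only assumed on $\C$ while the moment estimate used for non-explosion in turn uses that $X_s \in \C$. To break this circularity I would first extend $b$ outside $\C$ by precomposing with the metric projection onto $\C$: this preserves the local Lipshitz property and makes \eqref{hyp:mono1} valid globally, so the extended SDE is well-posed by classical theory. Applying the $\phi_\epsilon$ argument above to the extended solution then forces it into $\C$, at which point the extension is invisible and one recovers the unique solution to \eqref{fixpointeq}.
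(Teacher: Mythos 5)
Your argument for the constraint \eqref{constraint_pi} is essentially the paper's: both apply It\^o's formula to a smooth function of $\pi(X)$ on stopped intervals and use Assumption \ref{H_pi} to make every integrand either vanish or become non-positive (the paper approximates the indicator $\mathbf{1}_{(0,\infty)}$ rather than the positive part, a cosmetic difference), then pass to the limit in $N$ and $\epsilon$. Where you diverge is on well-posedness: the paper simply invokes standard results on SDEs with monotone drift, while you correctly notice that \eqref{hyp:mono1}--\eqref{hyp:mono2} are only assumed on $\C$ and try to break the resulting circularity. Your first device --- a local solution up to explosion via the global local-Lipschitz bound \eqref{hyp:loc_lip}, constraint preservation on $[0,t\wedge\tau_N]$ (which needs nothing beyond Assumption \ref{H_pi}), and only then the moment estimate from \eqref{hyp:mono1} to rule out explosion --- is sound and already closes the loop. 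The metric-projection extension in your last paragraph, however, should be dropped or justified: for convex $\C$ the composition $b\circ P_{\C}$ is still locally Lipschitz, but neither \eqref{hyp:mono1} nor the one-sided Lipschitz bound \eqref{hyp:mono2} transfers automatically, because $\langle x-x',\, b(P_{\C}x)-b(P_{\C}x')\rangle$ contains the cross term $\langle (x-P_{\C}x)-(x'-P_{\C}x'),\, b(P_{\C}x)-b(P_{\C}x')\rangle$ involving normal-cone components that are not controlled by $|x-x'|^2$ when $b$ has polynomial growth. Since the stopped-interval bootstrap suffices, this extra step is both unnecessary and the one genuinely shaky point of the write-up.
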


\begin{proof}
	First, observe that given $\mu\in C([0,T];\mathcal{P}_2(\R^d)),$ equation \eqref{fixpointeq} has a unique strong solution $X^\mu$ in $\mathcal{S}^2.$ Indeed, if we let
	\[ b^\mu(t,x,\alpha):=b(t,x,\mu,\alpha),\quad 
	\sigma^\mu(t,x,\alpha):=\sigma(t,x,\mu,\alpha),
	 \]
 then  from Assumption \ref{L1} we see that $\sigma^\mu$ is Lipshitz, while \ref{L2} and \ref{L3} imply the local Lipschitz continuity and the monotonicity of the drift coefficient $b^\mu$. Hence, by standard results on monotone SDEs (see for instance \cite[Thm.~3.26 p.\ 178]{pardoux2014stochastic}) \eqref{fixpointeq} has a unique strong solution, this solution being progressivey measurable and square integrable. This proves our assertion.\smallskip
	
	In order to show \eqref{constraint_pi}, consider a family $(\Psi_\epsilon)_{\epsilon>0}$ of non-negative and non-decreasing functions in $ C^2(\R)$ which for all $\epsilon>0$ satisfy:
	\[
	\Psi_\epsilon(x)=0\text{ on }(-\infty,0]\,,\quad 
	\Psi_\epsilon(x)=1\text{ on }[\epsilon,\infty)\,,\quad
	\quad\sup_\epsilon|\Psi_\epsilon|_\infty\leq 1\,,
	\]
	and such that $\Psi_\epsilon$ converges pointwise to $\mathbf{1}_{(0,\infty)}$ as $\epsilon\to 0$.
	Let $\tau_n:=\inf\{t\geq 0\text{ s.t.\ }|X_t|\geq n\}$. By It\^o Formula, we have for each $n\geq 0$ and $\epsilon>0$
	\begin{align*}
	\Psi_\epsilon(\pi(X_{t\wedge\tau_n}))-M_t^{\epsilon}
	&=\int_{0}^{\tau_n\wedge t} \big(\pi_x(X_s)\cdot b(s,X_s,\mu_s,\alpha_s)\big)\Psi_\epsilon'(\pi(X_t))ds
	\\
	&\quad \quad 
	+\dfrac{1}{2} \int_{0}^{\tau_n\wedge t}
	\Psi_\epsilon''(\pi(X_s)) |\pi_x(X_s)^\dagger\sigma(s,X_s,\mu_s,\alpha_s)|^2ds
	\\
	&\quad \quad 
	+\frac12\int_{0}^{\tau_n\wedge t}\Psi_\epsilon'(\pi(X_s)) \pi_{xx}(X_s)\cdot(\sigma\sigma^\dagger(s,X_s,\mu_s,\alpha_s)) ds\,,
	\end{align*}
	where we let $M_t^{\epsilon}:=\sum\nolimits_{k=1}^m\int_{0}^{\tau_n\wedge t}
	\pi_x(X_s)\cdot \sigma^{\cdot,k}(s,X_s,\mu_s,\alpha_s) \Psi_\epsilon'(\pi(X_s))dW^k_s$. Since $\Psi_\epsilon$ is supported on the real positive axis, only the values of $X$ which satisfy $\pi(X)>0$ contribute to the above expression. Hence, making use of Assumption \ref{H_pi}, we see that the first term in the previous right hand side is bounded above by $0$, while the two last terms simply vanish.
	We arrive at the relation
	\begin{align*}
	\e\left (\sup_{t\in[0,T]}\Psi_\epsilon(\pi(X_{t\wedge\tau_n}))\right )\leq 0.
	\end{align*}
	Letting first $n\to \infty$, and then $\epsilon\to0,$ we observe by Fatou Lemma that
	\begin{align*}
	\e\left(\sup_{t\in[0,T]}\mathbf{1}_{(0,\infty)}(\pi(X_t))\right)=0,
	\end{align*}
	and our claim follows.
\end{proof}

We are now able to prove the existence of a unique solution to equation \eqref{state}.

\begin{theorem}
	\label{thm:state}
	There exists a unique strong solution to equation \eqref{state} in $\mathcal{S}^2$, which is supported in $\C$ for all times. Furthermore, for each $p\in [2,r]$ and every $\alpha\in\A,$ the solution satisfies the moment estimate
\begin{equation}
\label{moment_est}
	\begin{aligned} 
		\e\left(\sup_{t\in [0,T]}|X_t|^p\right)
		&\leq C\left (\|X_0\|_{p},L_1,L_3,p\right )\left (1+\int_{0}^{T}|\alpha_t|^pdt\right ).
	\end{aligned}
\end{equation}
where the constant $C$ depends only upon the indicated quantities.
\end{theorem}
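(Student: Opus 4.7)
The plan is to carry out a Picard fixed-point argument on the flow of marginals, leveraging Lemma \ref{lem:constraint}. For $\mu \in \mathcal{X} := C([0,T]; \mathcal{P}_2^\C (\R^d))$, Lemma \ref{lem:constraint} already provides a unique solution $X^{\alpha,\mu} \in \mathcal{S}^2$ to the decoupled SDE \eqref{fixpointeq} with sample paths in $\C$. I would then define $\Phi(\mu)_t := \LL(X^{\alpha,\mu}_t)$ and equip $\mathcal{X}$ with the weighted distance $d_\lambda(\mu,\nu) := \sup_{t \in [0,T]} e^{-\lambda t/2} W_2(\mu_t, \nu_t)$ for a parameter $\lambda > 0$ to be tuned later; $(\mathcal{X},d_\lambda)$ is a complete metric space since $\mathcal{P}_2^\C$ is closed in $(\mathcal P_2,W_2)$.

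The map $\Phi$ sends $\mathcal{X}$ to itself: the support constraint is supplied by Lemma \ref{lem:constraint}, a finite second moment follows from Itô applied to $|X^{\alpha,\mu}_t|^2$ together with the monotonicity \eqref{hyp:mono1} and the growth \eqref{hyp:sigma1} (crucially, neither bound features $\mathcal{M}_2(\mu)$ on $\mathcal{P}_2^\C$), and $t$-continuity of the marginals in $W_2$ comes from standard Itô increment estimates. For the contraction, fix $\mu, \nu \in \mathcal{X}$, set $Y_t := X^{\alpha,\mu}_t - X^{\alpha,\nu}_t$, and apply Itô to $|Y_t|^2$. Writing each coefficient difference as
\[
b(t, X^\mu_t, \mu_t, \alpha_t) - b(t, X^\nu_t, \nu_t, \alpha_t) = \bigl[b(t, X^\mu_t, \mu_t, \alpha_t) - b(t, X^\nu_t, \mu_t, \alpha_t)\bigr] + \bigl[b(t, X^\nu_t, \mu_t, \alpha_t) - b(t, X^\nu_t, \nu_t, \alpha_t)\bigr]
\]
and analogously for $\sigma$, the first (spatial) terms are absorbed by the monotonicity \eqref{hyp:mono2} and the Lipschitz bound \eqref{hyp:sigma2}, while the second (measure) terms are controlled via \eqref{hyp:wass_b}--\eqref{hyp:wass_sigma}. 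Taking expectations, using the previously obtained bound on $\sup_t \e|X^{\alpha,\nu}_t|^2$ to neutralise the weight $(1+|X^\nu_t|^2)$, then invoking BDG and Gronwall, would deliver $\e[\sup_{s \leq t}|Y_s|^2] \leq C_T \int_0^t W_2(\mu_s, \nu_s)^2\, ds$. Combined with \eqref{elementary_W2}, this reads $W_2(\Phi(\mu)_t, \Phi(\nu)_t)^2 \leq C_T \int_0^t W_2(\mu_s, \nu_s)^2\, ds$, which becomes a strict $d_\lambda$-contraction for $\lambda$ large enough, and Banach yields the unique fixed point, hence the unique strong solution of \eqref{state}.

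For the $L^p$-estimate with $p \in [2,r]$, I would apply Itô to $|X_t|^p$ using a $C^2$-regularisation near the origin together with the localising sequence $\tau_n := \inf\{t : |X_t| \geq n\}$, and then invoke once more \eqref{hyp:mono1} and \eqref{hyp:sigma1}, which remain dimensionless in $\mu$ because $\LL(X_t) \in \mathcal{P}_2^\C$. Separating the mixed term $|X_t|^{p-2}|\alpha_t|^2$ via Young's inequality gives the differential inequality $d|X_t|^p \leq C_p(1 + |\alpha_t|^p + |X_t|^p)\,dt + dN_t^n$ for some local martingale $N^n$; applying $\e[\sup_{s\leq \cdot}]$, BDG on $N^n$ and Gronwall, then passing to the limit $n \to \infty$ by Fatou, yields \eqref{moment_est} with a constant depending only on $\|X_0\|_p, L_1, L_3, p$.

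The main obstacle would have been the quadratic-in-$x$ growth in the Wasserstein-Lipschitz bounds \eqref{hyp:wass_b}--\eqref{hyp:wass_sigma}: the factor $(1+|x|^2)$ would ordinarily force the Gronwall constant in the contraction to depend on state moments of the comparison process, poisoning the Picard iteration as the moments of $X^{\alpha,\nu}$ need not be controlled uniformly in $\nu$. The decoupling obtained by restricting $\Phi$ to $\mathcal{X} = C([0,T]; \mathcal{P}_2^\C (\R^d))$ is precisely what resolves this: on that invariant set, hypotheses \eqref{hyp:mono1} and \eqref{hyp:sigma1} hold uniformly in $\mu$, so that $\e|X^{\alpha,\nu}_t|^2$ is bounded independently of $\nu$, and the contraction constant $C_T$ remains finite throughout.
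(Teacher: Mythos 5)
Your proposal is correct and follows essentially the same route as the paper: a Banach fixed-point argument for the marginal-flow map $\mu\mapsto(\LL(X^{\alpha,\mu}_t))_t$ on the closed set $C([0,T];\mathcal{P}_2^\C)$, with Lemma \ref{lem:constraint} supplying invariance of the constraint, the same spatial/measure splitting of the coefficient differences handled by \eqref{hyp:mono2}, \eqref{hyp:sigma2} and \eqref{hyp:wass_b}--\eqref{hyp:wass_sigma}, and the same It\^o--BDG--Young--Gronwall derivation of the uniform-in-$\mu$ moment bound from \eqref{hyp:mono1} and \eqref{hyp:sigma1}. The only cosmetic difference is that you close the contraction via the exponentially weighted metric $d_\lambda$, whereas the paper uses the weight $e^{-2Lt}$ inside the It\^o computation and then iterates the map $k$ times; these are interchangeable.
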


\begin{proof}
	Recall that $\mathcal{P}_2^\C $ denotes the set of probability measures in $\mathcal{P}_2(\R^d)$ which are supported in
	$\C:=\pi^{-1}((\infty,0]).$
	Equipped with the standard Wasserstein distance, it is a closed subset of $\mathcal{P}_2(\R^d)$. Indeed, it is standard (see for instance \cite{dudley2018real}) that given probability measures $\{\mu_n,n\in\mathbb N\}$ and $\mu$ such that $\mu_n\Rightarrow\mu$, then
	\begin{align*}
	\text{supp}\mu\subset \liminf_{n\rightarrow \infty}(\text{supp}\mu_n):=\left\lbrace  x\in \R^d\enskip\Big|\enskip \limsup_{n\rightarrow \infty}\inf_{y\in \text{supp}\mu_n}|x-y|=0\right\rbrace,
	\end{align*} 
	so that our claim follows.
		Thus, for fixed $\alpha\in\A$, we can rightfully consider the operator 
	\begin{align*}
	\Theta\colon C([0,T];\mathcal{P}_2^\C )\rightarrow C([0,T];\mathcal{P}_2^\C ),\quad\mu\mapsto (\LL(X^{\alpha,\mu}_t))_{t\in [0,T]},
	\end{align*}
	where $X^{\mu}=X^{\alpha,\mu}$ is the unique solution to eq \eqref{fixpointeq}. Using similar arguments as in \cite{carmona2018probabilistic}, the existence of a unique solution to \eqref{eqa} follows if one can show that $\Theta$ has a unique fixed point. In fact, we are going to show that it is a contraction (for a well-chosen metric). The moment estimate \eqref{moment_est} will follow from the fixed point argument, provided one can show that
	\begin{equation}\label{moment_mu}
	\begin{aligned}
	\e\left(\sup_{t\in [0,T]}|X_t^\mu|^p\right)
	&\leq C\left (\|X_0\|_{p},L_1,L_3,p\right )\left (1+\int_{0}^{T}|\alpha_t|^pdt\right )
	\end{aligned}
	\end{equation}
	where the displayed constant depends on the indicated quantities but not on the particular element $\mu$ in $C([0,T];\mathcal P^{\C}_2)$.
	We now divide the proof into two steps.

	\item[\indent\textit{Step 1: moment bounds.}]
	 It\^o Formula gives
	\begin{equation}\label{ito_square}
	 \begin{aligned}
	\frac1p|X_t^\mu|^p -N_t
	&=\frac1p|X_0^\mu|^p + \int_0^t\Big\{\left \langle X^\mu, b(s,X^\mu,\mu,\alpha)\right \rangle |X|^{p-2} 
	\\
	&\quad \quad 
	+\frac12|\sigma(s,X^\mu,\mu,\alpha)|^2|X|^{p-2} + \frac{p-2}{2}|\sigma^{\dagger}X^\mu|^2|X|^{p-4} \Big\}ds 
	\end{aligned} 
	\end{equation}
where $N_t:=\int_0^t\left |X|^{p-2}\langle X^\mu, \sigma(s,X^\mu,\mu,\alpha)dW_s\right \rangle$ is the corresponding martingale term. 
Denoting by $\kappa>0$ the constant in the Burkholder-Davis-Gundy Inequality, the latter is estimated thanks to \eqref{hyp:sigma1} and Cauchy-Schwarz Inequality as
\[\begin{aligned}
 \e(\sup_{s\in[0,t]}N_t)
 &\leq \kappa\e\left (\left (\int_0^t|X|^{2p-4}|\sigma(s,X^\mu,\mu,\alpha)^\dagger X^\mu|^2ds\right )^{\frac12}\right )
 \\
 &\leq \kappa \sqrt{L_1}\e\left (\sup_{0\leq s\leq t}|X^\mu|^{\frac p2}\left (\int_0^t|X|^{p-2}(1+|X^\mu|^2+|\alpha|^2)ds\right )^{\frac12}\right ).
\end{aligned}
 \]
But from Young's inequality, the previous right hand side is also bounded by
\[
\frac{1}{2p}\e\left (\sup_{0\leq s\leq t}|X^\mu|^p\right ) + \frac{p\kappa ^2L_1}{2}\e\left (\int_0^t(|X|^{p-2}+|X^\mu|^p+|X|^{p-2}|\alpha|^2)ds\right ).
\]

Define  $\Psi_t:=\e\left (\sup_{0\leq s\leq t}|X_s^\mu|^p\right ).$
Taking the expectation in \eqref{ito_square}, we infer from \eqref{hyp:mono1}, \eqref{hyp:sigma1}, Young's inequality $ab\leq  \frac2pa^{\frac p2}+\frac{p-2}{p}b^{\frac{p}{p-2}}$ and the previous discussion that
\[ 	\frac{1}{2p}\Psi_t
\leq \frac1p\e(|X_0|^p) + C_p\left (L_1+ L_3\right )\int_0^t(1+\Psi_s+|\alpha_s|^p)ds 
 \]	
for some universal constant $C_p>0.$
Applying Gronwall Inequality, we obtain the desired moment estimate.

\item[\indent \textit{Step 2: the fixed point argument}.]
 From Lemma \ref{lem:constraint}, it is clear that for all $t\in [0,T],$ the probability measure $\p\circ (X_t^\mu)^{-1}$ is supported in $\C$. 
	For simplicity, let $L:=L_1\vee L_2\vee L_3$ and introduce the weight
	 \[ \phi_t:=\exp\left (-2Lt\right ) ,\quad t\in[0,T].\]
	Then, It\^o Formula gives
\begin{equation}\label{mart_decomp}
\begin{aligned}
	d\left (\frac12|X_t^{\mu}-X_t^{\nu}|^2\phi_t\right )
	&+2L|X_t^{\mu}-X_t^{\nu}|^2\phi_tdt
	\\
&=\phi_t\left \langle  X^{\mu}_t-X^{\nu}_t,b(t,X^{\mu}_t,\mu_t,\alpha_t)-b(t,X^{\nu}_t,\mu_t,\alpha_t)\right \rangle dt
\\
&\quad +\phi_t\left \langle  X^{\mu}_t-X^{\nu}_t,b(t,X^{\nu}_t,\mu_t,\alpha_t)-b(t,X^{\nu}_t,\nu_t,\alpha_t)\right \rangle dt
\\
&\quad \quad + \phi_t\left \langle X^{\mu}_t-X^{\nu}_t,\sigma(t,X^{\mu}_t,\mu_t,\alpha_t)-\sigma(t,X^{\nu}_t,\nu_t,\alpha_t)dW_t\right \rangle 
\\
&\quad \quad \quad + \frac12\phi_t |\sigma(t,X^{\mu}_t,\mu_t,\alpha_t)-\sigma(t,X^{\nu}_t,\nu_t,\alpha_t)|^2 dt.
\end{aligned}
\end{equation}
The first term in the right hand side of \eqref{mart_decomp} is evaluated thanks to \eqref{hyp:mono2}. For the second term, we use the quadratic growth assumption \eqref{hyp:wass_b}.
As for the It\^o correction, we can estimate it similarly, using this time Assumption \ref{L1}.
With $M_t:=\int_{0}^{t} \phi_s\left \langle X^{\mu}_s-X^{\nu}_s,\sigma(s,X^{\mu}_s,\mu_s,\alpha_s)-\sigma(s,X^{\nu}_s,\nu_s,\alpha_s)dW_s\right \rangle$ we get
\[ \begin{aligned}
\frac12|X_t^{\mu}-X_t^{\nu}|^2\phi_t
&+2L \int_0^t|X_s^{\mu}-X_s^{\nu}|^2\phi_sds -M_t
\\
&\leq
\int_0^t\Big\{(L_1+L_3)|X^\mu_s-X^\nu_s|^2
+(L_1+L_2)(1+|X_s^\nu|^2) W_2(\mu_s,\nu_s)^2
\Big\}\phi_sds
\\
&\leq 
2L\left(\int_{0}^{t}|X^\mu_s-X^\nu_s|^2
\phi_sds+
\Big (1+\sup_{s\in [0,t]}|X^\nu_s|^2\Big )\int_0^t W_2(\mu_s,\nu_s)^2\phi_sds
\right) 
\,.
\end{aligned}
 \]
Taking expectations, supremum in $t$, then absorbing to the left yields 
\[ \begin{aligned}
&\sup_{0\leq s\leq t}\e\left(|X_s^{\mu}-X_s^{\nu}|^2\right)\phi_s
\leq 
4L\Big(1+\e\big(\sup_{s\in [0,t]}|X^\nu_s|^2\big)\Big)\int_0^t W_2(\mu_s,\nu_s)^2\phi_sds
\end{aligned}
\]
Using the estimate \eqref{moment_est} with $p=2$, the fact that $\exp(-2TL)\leq \phi\leq 1,$ and the basic inequality \eqref{elementary_W2}, we arrive at
\[ \begin{aligned}
\sup_{0\leq s\leq t}
W_2(\Theta(\mu)_s,\Theta(\nu)_s)
&\leq 
C(\|X_0\|_p,T,L,K)\int_0^t W_2(\mu_s,\nu_s)^2ds\,.
\end{aligned}
\]
The contractivity now follows by considering the $k$-th composition of the map $\Theta$, for some $k>0$ large enough and the result then follows from Banach-fixed point theorem. 
\end{proof}

We now investigate some regularity of the control-to-state operator, which will be needed in the proof of the optimality principle.
\begin{lemma}
	\label{lem:lipshitz}
	For $p\in[2,r],$ the solution map 
	\begin{align*}
	G\colon\A \rightarrow \mathcal{S}^p\cap\mathcal{S}^2,\quad\alpha\mapsto X^{\alpha}
	\end{align*}
	is well-defined and Lipschitz continuous. More precisely, there exists a constant $C(L_1,L_2,L_3,T,K)>0$ (here $K$ is the constant associated to $\A$ through \eqref{adm_cont}), such that for all $\alpha,\beta\in \A $ 
	\begin{align*}
	\e\left(\sup_{t\in [0,T]}|X^{\alpha}_t-X^{\beta}_t|^2\right)\leq C(L_1,L_2,L_3,T,K)\int_{0}^{T}|\alpha_t-\beta_t|^2dt.
	\end{align*}
\end{lemma}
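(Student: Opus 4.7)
The plan is to first verify that $G$ takes values in $\mathcal{S}^p\cap\mathcal{S}^2=\mathcal{S}^p$, which is immediate from Theorem \ref{thm:state}: for any $\alpha\in\A$, Hölder's inequality combined with $\int_0^T|\alpha_t|^r\,dt\leq K$ and $p\leq r$ gives $\int_0^T|\alpha_t|^p\,dt\leq K^{p/r}T^{1-p/r}$, so \eqref{moment_est} controls $\e\sup_{t}|X^\alpha_t|^p$ by a constant depending on $\|X_0\|_p, L_1, L_3, p, T, K$. Then I would derive the $L^2$-Lipschitz bound by a direct Itô energy argument, essentially rerunning Step 2 of the proof of Theorem \ref{thm:state} but with two distinct controls rather than two distinct measures.

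More precisely, I would fix $\alpha,\beta\in\A$, set $Y_t:=X^\alpha_t-X^\beta_t$ and $\mu^\alpha_t:=\LL(X^\alpha_t)$, $\mu^\beta_t:=\LL(X^\beta_t)$ (both in $\mathcal P_2^{\C}$ by Theorem \ref{thm:state}), and apply Itô's formula to $\phi_t|Y_t|^2$ with the exponential weight $\phi_t=e^{-ct}$ used in \eqref{mart_decomp}, chosen to absorb the linear-in-$|Y|^2$ contributions. The drift difference is decomposed by adding and subtracting $b(t,X^\beta_s,\mu^\alpha_s,\beta_s)$: the joint monotonicity \eqref{hyp:mono2} bounds $\langle Y_s,b(s,X^\alpha_s,\mu^\alpha_s,\alpha_s)-b(s,X^\beta_s,\mu^\alpha_s,\beta_s)\rangle$ by $L_3(|Y_s|^2+|\alpha_s-\beta_s|^2)$, and the Wasserstein Lipschitz bound \eqref{hyp:wass_b} combined with Cauchy--Schwarz controls the remaining inner product by $|Y_s|\sqrt{L_2}(1+|X^\beta_s|)W_2(\mu^\alpha_s,\mu^\beta_s)$. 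The diffusion term $|\sigma(s,X^\alpha_s,\mu^\alpha_s,\alpha_s)-\sigma(s,X^\beta_s,\mu^\beta_s,\beta_s)|^2$ is split analogously and estimated via \eqref{hyp:sigma2} and \eqref{hyp:wass_sigma}.

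Taking expectations, applying Burkholder--Davis--Gundy to the martingale term and Young's inequality to absorb a fraction of $\e\sup_{s\leq t}|Y_s|^2$ back to the left-hand side, and invoking the elementary bound \eqref{elementary_W2} in the form $W_2(\mu^\alpha_s,\mu^\beta_s)^2\leq \e|Y_s|^2$, one arrives at an integral inequality of the form
\begin{equation*}
\e\sup_{s\leq t}|Y_s|^2 \;\leq\; C\int_0^t \e|Y_s|^2\,ds \;+\; C\int_0^T|\alpha_s-\beta_s|^2\,ds,
\end{equation*}
to which Gronwall's lemma applies and yields the claim.

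The main technical point is the handling of the cross terms involving the prefactor $(1+|X^\beta_s|)$ appearing in \eqref{hyp:wass_b} and its square in \eqref{hyp:wass_sigma}: absorbing these requires separating $Y$ from $X^\beta$ via Cauchy--Schwarz and then using the $L^2$-moment bound \eqref{moment_est}, which yields $\sup_{t\in[0,T]}\e(1+|X^\beta_t|)^2\leq C(\|X_0\|_2,L_1,L_3,T,K)$ uniformly in $\beta\in\A$. This uniform moment control is what converts the measure-dependent contributions into purely $\e|Y_s|^2$ terms compatible with Gronwall, and it is the reason the final constant depends on $K$.
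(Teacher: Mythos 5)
Your proposal is correct and follows essentially the same route as the paper's proof: an It\^o energy estimate on $|X^\alpha_t-X^\beta_t|^2$ with the drift difference split by adding and subtracting $b(s,X^\beta_s,\LL(X^\alpha_s),\beta_s)$, the joint monotonicity \eqref{hyp:mono2} and the Wasserstein bounds \eqref{hyp:wass_b}, \eqref{hyp:wass_sigma} applied to the two pieces, BDG and Young to absorb the martingale, the uniform moment bound \eqref{moment_est} to factor out the $(1+|X^\beta|)$ prefactors (whence the dependence on $K$), and Gronwall to close. The only cosmetic difference is your use of the exponential weight $\phi_t$, which the paper reserves for the fixed-point argument in Theorem \ref{thm:state} and replaces here by a direct appeal to Gronwall; both are equivalent.
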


\begin{proof}
	That $G$ is well-defined follows immediately from Theorem \ref{thm:state}. Towards Lipschitz-continuity, the property is shown by similar considerations as in the proof of Theorem \ref{thm:state}.
Indeed, fixing $\alpha,\beta\in \A $ and letting $M$ be the martingale $M_t:=\int_0^t\langle\sigma(t,X^{\alpha},\LL(X^{\alpha}),\alpha)-\sigma(s,X^{\beta},\LL(X^{\beta}),\beta),(X^{\alpha}-X^{\beta}) dW \rangle$,
then using It\^o Formula with assumptions \ref{L1}, \ref{L2} and \ref{L3}, we arrive at
	\begin{align*}
	&\frac12|X^{\alpha}_t-X^{\beta}_t|^2-M_t
	\\
	&=
	\int_0^t\Bigg\{\left \langle X^{\alpha}_s-X^{\beta}_t, b(s,X^{\alpha}_s,\LL(X^{\alpha}_s),\alpha_s)-b(s,X^{\beta}_s,\LL(X^{\alpha}_s),\beta_s)\right \rangle 
	\\
	&\quad 
	+\left \langle X^{\alpha}_s-X^{\beta}_s,b(s,X^{\beta}_s,\LL(X^{\alpha}_s),\beta_s)-b(s,X^{\beta}_s,\LL(X^{\beta}_s),\beta_s)\right \rangle 
	\\
	&\quad \quad \quad 
	+ \frac12|\sigma(t,X^{\alpha}_s,\LL(X^{\alpha}_s),\alpha_s)-\sigma(t,X^{\beta}_s,\LL(X^{\beta}_s),\beta_s)|^2\Bigg\}ds
	\\
	&\leq \int_0^t\Big\{ (L_3+\frac12+L_1)(|X^\alpha-X^\beta|^2 + |\alpha-\beta|^ 2) 
	\\
	&\quad \quad \quad \quad 
	+ (\frac{L_2}{2}+L_1)(1+|X^\alpha|^2+|X^\beta|^2)W_2(\LL(X^\alpha),\LL(X^\beta))^2	\Big\}ds\,.
	\end{align*}
Letting $\kappa>0$ be the constant in the BDG inequality, the estimate \eqref{elementary_W2} and $ab\leq \frac {a^2}{4}+ b^2$ yield
	\begin{multline*}
	\frac14\e\left(\sup_{s\in [0,t]}|X^{\alpha}_t-X^{\beta}_t|^2\right)
	\\
	\leq C_L(3+\kappa^2)\Big( 2 + \e\big(\sup_{s\in[0,T]} |X^\alpha_s|^2+|X_s^\beta|^2\big)\Big)\int_{0}^{t}\left \{\e(\sup_{r\in [0,s]}|X_r^{\alpha}-X_r^{\beta}|^2) + |\alpha_s-\beta_s|^2\right\}ds
	\end{multline*}
	where $C_L:=\frac12\vee L_1\vee\frac{L_2}{2}\vee L_3.$
	The result now follows from the uniform bound \eqref{moment_est}, together with Gronwall Lemma.
\end{proof}
\begin{remark}
	Since we have $W_2(\LL(X_s^{\alpha}),\LL(X_s^{\beta}))\leq \e\left( \sup_{t\in [0,T]}|X_s^{\alpha}-X_s^{\beta}|^2\right)^\frac{1}{2}$ we also get the Lipschitz continuity of the map 
	\begin{align*}
	\A \rightarrow \mathcal{P}_2(\mathcal{S}^2),
	\quad\alpha \mapsto \LL(G(\alpha)).
	\end{align*}
\end{remark}

\begin{remark}[Fokker-Planck equation]
	\label{rem:FP}
	Given the settings of Example \ref{exa:FHN},
	we define
	\begin{align*}
	b_0(t,x,\alpha)&:=\begin{pmatrix}
	v-\frac{v^3}{3}-w+\alpha \\ c(v+a-bw) \\ \overline{a}S(v)(1-y)-\overline{b}y
	\end{pmatrix}
	,\quad
	b_1(x,z):=\begin{pmatrix}
	-J(v-V_{rev})z_3 \\ 0 \\ 0
	\end{pmatrix},
	\\
	\tilde{\sigma}(x,z)&:=\begin{pmatrix}
	\sigma_{ext} & -\sigma^J(v-V_{rev})z_3 & 0 \\
	0 & 0 & 0 \\
	0 & 0 & \chi(y)\sqrt{\overline{a}S(v)(1-y)+\overline{b}y} 
	\end{pmatrix}.
	\end{align*}
	If we assume that the solution to the corresponding mean-field equation has a density $p(t,x)$ with respect to the $3$-dimensional lebesgue measure, then the McKean-Vlasov-Fokker-Planck equation is given by the nonlinear PDE:
	\begin{align*}
	\partial_tp(t,x)&=-\text{div}\left(\left(b_0(t,x,\alpha)+\int_{\R^3}b_1(x,z)p(t,z)dz\right) p(t,x) \right)\\
	&\quad +\dfrac{1}{2}\nabla^2\cdot\left(\left(\iint_{\R^3\times \R^3}\tilde{\sigma}(x,z)\tilde{\sigma}(x,\bar z)^{\dagger}p(t,z)p(t,\bar z)\,dz d\bar z\right)p(t,x)\right)
	\end{align*}
	(see \cite{baladron2012mean}). It is degenerate parabolic because the matrix $\sigma {\tilde \sigma}^\dagger$ is not strictly positive.
\end{remark}

\subsection{Proof of Theorem \ref{thm:ex}}

We now prove the existence of an optimal control for \eqref{eqa}. The strategy we use strings along the commonly named ``direct method'' in the calculus of variations. As a trivial consequence of the assumptions made in Section \ref{sec:cost} and the uniform estimate \eqref{moment_est}, note at first that our control problem is indeed finite.
 Next, consider a sequence $(\alpha^n)_{n\in \mathbb{N}}\subset \A $ realizing the infimum of $J$ asymptotically, i.e.\
	\begin{align*}
	\lim\limits_{n\rightarrow \infty}J(\alpha^n)=\inf_{\alpha\in \A }J(\alpha).
	\end{align*} 
	Since $\A \subset L^2([0,T];\R^k)$ is bounded and closed, by Banach Alaogu Theorem
	there exists  an $\alpha\in L^2([0,T];\R^k)$ and a subsequence also denoted by $(\alpha^n)_{n\in \mathbb{N}}$, such that 
	\begin{align*}
	\alpha^n\rightharpoonup \alpha, \quad \text{weakly in}\enskip L^2(0,T;\R^k).
	\end{align*}
	Since $\A $ is also convex, we get $\alpha\in \A $, so $\alpha$ is indeed an admissible control. We now divide the proof into four steps.
	
	\begin{trivlist}
	\item[\indent\textit{Step 1: tightness.}]
	In the sequel, we denote by $X^n$ the solution of the state equation \eqref{state} with respect to the control $\alpha^n$, $n\in\mathbb N.$ Adding and subtracting in \eqref{state}, we have
	\begin{multline*}
	\|X_t^n-X_s^n\|_4^4
	\leq 4^3\bigg\{
	\big\|\int_{s}^{t}b(r,0,\delta_0,0)dr\big\|_4^4
	+ \big\|\int_s^tb(r,0,\LL(X_r^n),0)-b(r,0,\delta_0,0)dr\big\|_4^4
	\\
	+\big\|\int_{s}^{t}b(r,X_r^n,\LL(X_r^n),\alpha_r^n)-b(r,0,\LL(X_r^n),0)dr\big\|_4^4
	+\kappa\big\|\int_{s}^{t}|\sigma(r,X^n_r,\LL(X^n_r),\alpha^n_r)|^2dr\big\|_2^2\bigg\}\,,
	\end{multline*}
	where $\kappa>0$ is the constant in the BDG inequality.
	Using the assumptions \ref{L1}, \ref{L2}, \ref{L3}, the fact that $0\in\C$ and the basic inequality \eqref{elementary_W2}, we obtain that 
	\begin{multline*}
	\|X_t^n-X_s^n\|_4^4
	\leq 4^3\bigg\{
	(t-s)^4 \sup_{r\in[0,T]}|b(r,0,\delta_0,0)|^4
	+ (t-s)^4L_2^2\sup_{r\in[0,T]}|X^n_r|^4
	\\
	+
	L_2^2\big\|\int_{s}^{t}(1+|X^n_r|^{q-1}+|\alpha^n_r|^{q-1}+\mathcal{M}_2(\LL(X_r^n))^2(|X^n_r|+|\alpha^n_r|)dr\big\|_4^4
	\\
	+  \kappa L_1^2\big\|\int_{s}^{t}(1+|X_r^n|^2+|\alpha_r^n|^2)dr\|_2^2
	\bigg\}\,.
	\end{multline*}
	Using H\"older Inequality, our assumption that $4\leq 4q\leq r$ together with Young Inequality $ab\leq \frac{q-1}{q}a^\frac{q}{q-1}+\frac{1}{q}b^q$, we arrive at the following estimate, for all $n\in \mathbb{N}$ and $0\leq s\leq t\leq T$
	\begin{align*}
	\e\left(|X_t^n-X_s^n|^4\right)
	&\leq C(L,T)\bigg\{ 
	(t-s)^4 \left [\sup_{r\in[0,T]}|b(r,0,\delta_0,0)|^4
	+\e\left(1+\sup_{r\in[0,T]}|X^n_r|^{4q}\right)\right]
	\\
	&\quad 
	+ (t-s)^{4/3}\Big(1+\int_s^t|\alpha_u^n|^rdu\Big)\bigg\}\,,
	\end{align*}
	where the above constant depends upon the indicated quantities, but not on $n\in\mathbb N.$

	Making use of the uniform estimate \eqref{moment_est}, the Kolmogorov continuity criterion then asserts that the sequence of probability measures $(\p\circ (X^n)^{-1})_{n\in \mathbb{N}}$, defined on the space 
	\[E:=\left (C([0,T];\R^d),\mathcal{B}(C([0,T];\R^d))\right )\] is tight.
	In the same way, we can prove that the sequence on probability measures $(\p_n)_{n\in \mathbb{N}}:=(\p\circ (X^n)^{-1},\p\circ (B^n)^{-1})_{n\in \mathbb{N}}$, with 
	\begin{align*}
	B^n(t):=\int_{0}^{t}b(s,X_s^n,\LL(X_s^n),\alpha_s^n)ds,
	\end{align*}
	is tight on the product space $E\times E,$ with respect to the product topology.
	Thus by Prokhorov's theorem there exists a subsequence of $(\p_n)_{n\in \mathbb{N}}$, which converges weakly to some probability measure $\p^*$ on $E\times E$.

	\item[\indent\textit{Step 2: passage to the limit in the drift.}]
	By Skorokhod's representation theorem we can then find random variables $\overline{X},\overline{B}$, $(\overline{X}^n)_{n\in \mathbb{N}},(\overline{B}^n)_{n\in \mathbb{N}}$ defined on some probability space $(\overline{\Omega},\overline{\mathcal{F}},\overline{\p})$ and with values in $E\times E$ such that 
	\begin{itemize}
		\item $\overline{\p}\circ (\overline{X}^n,\overline{B}^n)^{-1}=\p_n$ for all $n\in \mathbb{N}$ and $\overline{\p}\circ (\overline{X},\overline{B})^{-1}=\p^*$ and
		\item  $\lim\limits_{n\rightarrow \infty}(\overline{X}^n,\overline{B}^n) =(\overline{X},\overline{B})$,  $\overline{\p}$-almost surely with respect to the uniform topology.
	\end{itemize}
 From (\ref{moment_mu}) and by the definition of $\mathbb{A}$ we get for any $p\leq r$
 \begin{align*}
 \overline\e\left(\sup_{0\leq t\leq T}|\overline{X}_t^n|^p\right)\leq C(p,\|X_0\|_p,L_1,L_2,L_3,K),
 \end{align*}
 for some constant independent of $n$. Thus we can conclude by the dominated convergence theorem that
 \begin{align*}
 W_2(\mathcal{L}(\overline{X}_t^n),\mathcal{L}(\overline{X}_t))^2\leq \e\left(\sup_{0\leq t\leq T}|\overline{X}_t^n-\overline{X}_t|^2\right)\rightarrow 0,
 \end{align*}
 as $n\rightarrow \infty$. This also implies $(\mathcal{L}(\overline{X}_t))_{t\in [0,T]}\subset \mathcal{P}_2^\C $, since $ \mathcal{P}_2^\C $ is closed.

 To identify the almost sure limit $\overline{B}$, we first claim that for each $t\in [0,T]$
 \begin{align}
 \label{weak_B}
 \overline{B}^n(t)\rightharpoonup \int_{0}^{t}b(s,\overline{X}_s,\LL(\overline{X}_s),\alpha_s)ds,
 \end{align}
weakly in $L^2(\overline{\Omega};\R^d)$. 
Indeed, by \eqref{hyp:loc_lip} and the dominated convergence theorem we have
 \begin{align*}
 \overline\e\left(\int_{0}^{t}|b(s,\overline{X}_s^n,\mathcal{L}(\overline{X}_s^n),\alpha_s^n)-b(s,\overline{X}_s,\mathcal{L}(\overline{X}_s),\alpha_s^n)|^2ds\right)\rightarrow 0.
 \end{align*}
 Likewise, for $h\in L^2(\overline{\Omega};\R^d)$ we have by Assumption \ref{ass:weak} and dominated convergence
\begin{align*}
\overline\e\left(\int_{0}^{t}\langle\left(b(s,\overline{X}_s,\mathcal{L}(\overline{X}_s),\alpha_s^n)-b(s,\overline{X}_s,\mathcal{L}(\overline{X}_s),\alpha_s)\right),h\rangle ds\right)\rightarrow 0,
\end{align*}
as $n\rightarrow \infty$, thus proving our claim. 

The desired identification then follows from \eqref{weak_B}, the Banach-Saks theorem and the uniqueness of the almost sure limit. 
The processes $\overline B$ and $\int_{0}^{\cdot}b(s,\overline{X}_s,\LL(\overline{X}_s),\alpha_s)ds$ being both continuous pathwise, they are indistinguishable, hence the identity
\begin{align}
\label{id:B_bar}
\overline{B}(t)=\int_{0}^{t}b(s,\overline{X}_s,\LL(\overline{X}_s),\alpha_s)ds,\quad 
\end{align}
for all $t\in[0,T],$
$\overline{\p}$-almost surely.


	\item[\indent\textit{Step 3: identification of the martingale}.]
	Letting $\sigma\sigma^\dagger(t,x,\mu,\alpha):=\sigma(t,x,\mu,\alpha)\sigma(t,x,\mu,\alpha)^\dagger$
	for short, similar arguments as above show that 
	\begin{align*}
	\sigma\sigma^\dagger(t,\overline{X}^n_t,\LL(\overline{X}^n_t),\alpha^n_t)\rightharpoonup \sigma\sigma^\dagger(t,\overline{X}_t,\LL(\overline{X}_t),\alpha_t)
	\end{align*}
	weakly in $L^2([0,T]\times\overline{\Omega};\R^d)$.
	Since the process 
	\begin{align*}
	M_t^n:=X_t^n-X_0-B^n(t)=\int_{0}^{t}\sigma(s,X_s^n,\LL(X_s^n),\alpha_s^n)dW_s
	\end{align*}
	is, for each $n,$  a $\mathcal{G}_t^n:=\sigma({X_s^n|s\leq t})$ martingale under $\p$, we can conclude that 
	\begin{align*}
	\overline{M}_t^n:=\overline{X}_t^n-X_0-\overline{B}^n(t)
	\end{align*}
	is a $\mathcal{\overline{G}}_t^n:=\sigma({\overline{X}_s^n|s\leq t})$ martingale under $\overline{\p}$ with quadratic variation 
	\begin{align*}
	\langle \overline{M}^n\rangle_t=\int_{0}^{t}\sigma\sigma^\dagger(s,\overline{X}_s^n,\LL(\overline{X}_s^n),\alpha_s^n)ds.
	\end{align*}
	From the previous considerations, we can conclude that 
	\begin{align*}
	\overline{M}_t^n\rightarrow \overline{X}_t-X_0-\int_{0}^{t}b(s,\overline{X}_s,\LL(\overline{X}_s),\overline{\alpha}_s)ds=:\overline{M}_t,
	\end{align*}
	$\overline{\p}$-almost surely for all $t\in [0,T]$. Thus by the dominated convergence theorem the process $(\overline{M}_t)_{t\in [0,T]}$ is a $\overline{\mathcal{G}}_t:=\sigma(\overline{X}_s|s\leq t)$ martingale under $\overline{\p}$ and with standard arguments we also obtain, that $(\overline{M}_t)_{t\in [0,T]}$ has quadratic variation 
	\begin{align*}
	\langle\overline{M}\rangle_t=\int_{0}^{t}\sigma\sigma^\dagger(s,\overline{X}_s,\LL(\overline{X}_s),\overline{\alpha}_s)ds.
	\end{align*}
	By the martingale representation theorem we can find an extended probability space $(\hat{\Omega},\hat{\mathcal{F}},(\hat{\mathcal{F}}_t)_{t\in [0,T]},\hat{\p})$ with an $m$-dimensional brownian motion $\hat{W}$, such that the natural extension $\hat{X}$ of $\overline{X}$ satisfies $\hat{\p}\circ (\hat{X}^{-1})=\overline{\p}\circ(\overline{X}^{-1})$ and 
	\begin{align*}
	\hat{X}_t=X_0+\int_{0}^{t}b(s,\hat{X}_s,\LL(\hat{X}_s),\overline{\alpha}_s)ds+\int_{0}^{t}\sigma(s,\hat{X}_s,\LL(\hat{X}_s),\overline{\alpha}_s)d\hat{W}_s,
	\end{align*}
	$\hat{\p}$-almost surely for all $t\in [0,T]$.

	\item[\indent\textit{Step 4: end of the proof}]
	It remains to show that the infimum is attained for $\alpha .$
	Due to the uniqueness of equation \eqref{state}, we have $\p \circ (X^{\alpha})^{-1}=\hat{\p}\circ (\hat{X}^{-1})$. Using Fatou's lemma, continuity of $f,g,$, Assumption \ref{ass:weak} and Remark \ref{rem:lower}, we obtain 
	\begin{align*}
	\inf_{\alpha\in \A}J(\alpha)&=\lim\limits_{n\rightarrow \infty}J(\alpha^n)\\
	&\geq \liminf_{n\rightarrow \infty}\e\left(\int_{0}^{T}f(t,X_t^n,\LL(X_t^n),\alpha_t^n)dt+g(X_T^n,\LL(X_T^n))\right)\\
	&=\liminf_{n\rightarrow \infty}\overline{\e}\left(\int_{0}^{T}f(t,\overline{X}_t^n,\LL(\overline{X}_t^n),\alpha_t^n)dt+g(\overline{X}_T^n,\LL(\overline{X}_T^n))\right)\\
	&\geq\overline{\e}\left(\int_{0}^{T}f(t,\overline{X}_t,\LL(\overline{X}_t),\alpha_t)dt+g(\overline{X}_T,\LL(\overline{X}_T))\right)\\
	&=\hat{\e}\left(\int_{0}^{T}f(t,\hat{X}_t,\LL(\hat{X}_t),\alpha_t)dt+g(\hat{X}_T,\LL(\hat{X}_T))\right)\\
	&=\e\left(\int_{0}^{T}f(t,X^{\alpha}_t,\LL(X^{\alpha}_t),\alpha_t)dt+g(X^{\alpha}_T,\LL(X^{\alpha}_T))\right)\\
	&=J(\alpha).
	\end{align*} 
	This shows that $\alpha$ has the desired properties, and hence the proof is finished.
\hfill\qed
\end{trivlist}


\section{The maximum principle: proof of Theorem \ref{thm:max}}
\label{sec:max}

In this section, it will be assumed implicitly that assumptions \ref{H_pi}, \ref{ass:mkv}, \ref{ass:weak} and \ref{ass:pontryagin} hold.
Hereafter, we let $(\tilde{\Omega},\tilde{\mathcal{A}},\tilde{\p})$ be a copy of the probability space $(\Omega,\mathcal{A},\p)$. The corresponding expectation map will be denoted by $\tilde\e$. 

\subsection{G\^ateaux differentiability}

In this subsection we aim to complete Lemma \ref{lem:lipshitz} by showing the G\^ateaux-differentiability of the control-to-state operator
\begin{align*}
G:\A\subset L^p([0,T];\R^k)\rightarrow \mathcal{S}^2,\quad\alpha \mapsto X^\alpha.
\end{align*}
 The G\^ateaux derivative of the solution map will be given by the solution of a mean-field equation with random coefficients. We will deal with this problem in the similar fashion as its done in \cite[Thm.~6.10 p.\ 544]{carmona2018probabilistic}.
\begin{lemma}
	\label{lem:variation}
	The solution map $G$ is G\^ateaux-differentiable. Moreover, for each $\alpha\in \A$, its derivative in the direction $\beta\in\A$ is given by 
	\begin{align*}
	dG(\alpha)\cdot \beta=Z^{\alpha,\beta},
	\end{align*}
	where, introducing
	\begin{align*}
	B_\mu(t,x,\mu)&:=\iint_{\R^d\times\R^d}b_\mu(t,x,\mathcal{L}(X_t),\alpha_t)(\tilde{x})\cdot \tilde{y}\mu(d\tilde{x}\times d\tilde{y})
	\\
	\Sigma_\mu(t,x,\mu)&:=\iint_{\R^d\times\R^d}\sigma_\mu(t,x,\mathcal{L}(X_t),\alpha_t)(\tilde{x})\cdot \tilde{y}\mu(d\tilde{x}\times d\tilde{y})\,,
	\end{align*}
	 the process $Z=Z^{\alpha,\beta}$ is characterized as the unique solution to 
		\begin{equation}\label{variation_process}
		\left \{\begin{aligned}
		dZ_t&=\big\{b_x(t,X_t,\LL(X_t),\alpha_t)\cdot Z_t+b_\alpha(t,X_t,\LL(X_t),\alpha_t)\cdot \beta_t+B_\mu(t,X_t,\LL(X_t,Z_t))\big\}dt,\\
		&\quad + \big\{\sigma_x(t,X_t,\LL(X_t),\alpha_t)\cdot Z_t+\sigma_\alpha(t,X_t,\LL(X_t),\alpha_t)\cdot \beta_t+\Sigma_\mu(t,X_t,\LL(X_t,Z_t))\big\}dW_t\\
		Z_0&=0.
		\end{aligned}\right.
		\end{equation}
	
\end{lemma}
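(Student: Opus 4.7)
The plan is to follow the strategy of \cite[Thm.~6.10]{carmona2018probabilistic}, adapted to the present setting with one-sided Lipschitz drift. I would break the argument into three stages.

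First, I would establish well-posedness of the linearised equation \eqref{variation_process}. This is a linear McKean--Vlasov SDE with random, time-dependent coefficients. Crucially, under \ref{A1} and \ref{A2} the matrices $b_x,\sigma_x,\sigma_\alpha$ are bounded, $b_\alpha$ is bounded, and the L-derivatives $b_\mu,\sigma_\mu$ satisfy $|b_\mu(t,X_t,\LL(X_t),\alpha_t)(\tilde x)|\leq A_1(1+|X_t|)$ (and similarly for $\sigma_\mu$). Since $X\in \mathcal S^p$ for $p\geq2$ by Theorem \ref{thm:state}, a standard Picard iteration on the space $\mathcal S^{2,d}$ combined with the contraction argument of Theorem \ref{thm:state} (using the bound \eqref{elementary_W2} and Gronwall) delivers a unique solution $Z^{\alpha,\beta}$ in $\mathcal S^{2,d}$.

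Second, since $\A$ is convex, for $\alpha,\beta\in\A$ and $\epsilon\in(0,1]$ the control $\alpha^{\epsilon}:=\alpha+\epsilon(\beta-\alpha)$ lies in $\A$. I denote $X^\epsilon:=X^{\alpha^\epsilon}$, $X:=X^\alpha$ and define the error process
\begin{equation*}
V^{\epsilon}_t:=\frac{X^{\epsilon}_t-X_t}{\epsilon}-Z_t,\qquad Z=Z^{\alpha,\beta-\alpha}.
\end{equation*}
The goal is to show $\|\sup_{t\leq T}|V^{\epsilon}_t|\|_2\to0$ as $\epsilon\downarrow0$. Lemma \ref{lem:lipshitz} gives the a priori control $\e\sup_t|X^\epsilon_t-X_t|^2\leq C\epsilon^2$, and by the remark following that lemma also $W_2(\LL(X^\epsilon_t),\LL(X_t))\leq C\epsilon$. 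From the SDEs for $X^\epsilon$ and $X$, I Taylor-expand the coefficients: for the drift,
\begin{equation*}
b(t,X^\epsilon,\LL(X^\epsilon),\alpha^\epsilon)-b(t,X,\LL(X),\alpha)
=b_x\cdot(X^\epsilon-X)+\epsilon\, b_\alpha\cdot(\beta-\alpha)+\tilde\e\bigl[b_\mu(\tilde X)\cdot(\tilde X^\epsilon-\tilde X)\bigr]+R^{\epsilon,b}_t,
\end{equation*}
where all derivatives are evaluated at $(t,X_t,\LL(X_t),\alpha_t)$, and an analogous expansion holds for $\sigma$ with remainder $R^{\epsilon,\sigma}_t$. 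Subtracting the equation for $Z$ and dividing by $\epsilon$ produces the SDE
\begin{equation*}
dV^\epsilon_t=\bigl\{b_x\cdot V^\epsilon_t+\tilde\e[b_\mu(\tilde X_t)\cdot\tilde V^\epsilon_t]\bigr\}dt
+\bigl\{\sigma_x\cdot V^\epsilon_t+\tilde\e[\sigma_\mu(\tilde X_t)\cdot\tilde V^\epsilon_t]\bigr\}dW_t+\epsilon^{-1}R^\epsilon_t\,dt+\epsilon^{-1}S^\epsilon_t\,dW_t.
\end{equation*}
Applying It\^o to $|V^\epsilon_t|^2$, using the one-sided bound $\langle b_x\cdot z,z\rangle\leq A_1|z|^2$ in \ref{A1}, the boundedness of $\sigma_x$ from \ref{A2}, and the Cauchy--Schwarz inequality together with $\tilde\e|\tilde V^\epsilon_t|^2=\e|V^\epsilon_t|^2$ (since $\tilde X$ is an independent copy), Gronwall's lemma reduces the problem to showing
\begin{equation*}
\epsilon^{-2}\,\e\int_0^T|R^\epsilon_t|^2\,dt\longrightarrow0,\qquad \epsilon^{-2}\,\e\int_0^T|S^\epsilon_t|^2\,dt\longrightarrow0.
\end{equation*}

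The remainder estimate is the main obstacle. The classical $(x,\alpha)$-contributions are handled by the fundamental theorem of calculus and dominated convergence, using the polynomial growth \eqref{hyp:loc_lip} with $4q\leq r$ together with the uniform moment bound \eqref{moment_est} for $X^\epsilon$. The nontrivial part is the measure component: one writes
\begin{equation*}
b(t,X,\LL(X^\epsilon),\alpha)-b(t,X,\LL(X),\alpha)-\tilde\e\bigl[b_\mu(t,X,\LL(X),\alpha)(\tilde X)\cdot(\tilde X^\epsilon-\tilde X)\bigr]
\end{equation*}
and expands it along the joint law path $s\mapsto \LL(X+s(X^\epsilon-X))$, using the chain rule for L-derivatives and the $L^2$-continuity of $\mu\mapsto b_\mu(t,x,\mu,\alpha)$; the bound \ref{A1} on $b_\mu$ and the $O(\epsilon^2)$ control on $\|X^\epsilon-X\|_2^2$ then yield a remainder of order $o(\epsilon^2)$ after squaring. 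The same scheme applies to $\sigma$ via \ref{A2}. A Gronwall argument on $\e\sup_{s\leq t}|V^\epsilon_s|^2$, in combination with the BDG inequality, concludes that $V^\epsilon\to0$ in $\mathcal S^{2,d}$, which is exactly the G\^ateaux differentiability of $G$ in the direction $\beta-\alpha$.
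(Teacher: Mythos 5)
Your overall architecture --- well-posedness of the linearised McKean--Vlasov equation, Taylor expansion of the coefficients along the segment joining $X$ to $X^\epsilon$, and a Gronwall/BDG argument on the renormalised difference --- is the same as the paper's, and your treatment of the remainders (fundamental theorem of calculus plus dominated convergence, using $4q\le r$ and the uniform moment bounds) matches as well. However, your well-posedness step contains a genuine gap: the claim that ``under \ref{A1} and \ref{A2} the matrices $b_x,\sigma_x,\sigma_\alpha$ are bounded'' is false for $b_x$. Assumption \ref{A1} only provides the one-sided bound $\langle b_x\cdot z,z\rangle\le A_1|z|^2$ together with the polynomial growth $|b_x(t,x,\mu,\alpha)|\le A_1(1+|x|^{q-1})$, so along the solution the map $v\mapsto b_x(t,X_t,\LL(X_t),\alpha_t)\cdot v$ is Lipschitz only with the random, unbounded constant $A_1(1+|X_t|^{q-1})$ (for FitzHugh--Nagumo this is the $-v^2$ entry). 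A ``standard Picard iteration on $\mathcal S^{2,d}$'' therefore does not close; for the equation with frozen measure argument one must invoke the solvability theory for SDEs with monotone drift (as in the proof of Theorem \ref{thm:state}), checking in addition the local integrability of the drift, and only then run a contraction in the measure variable. For that contraction, note that the nonlinearity in \eqref{variation_process} involves the joint law $\LL(X_t,Z_t)$ rather than $\LL(Z_t)$ alone: the representation $B_\mu(t,X_t,\LL(X_t,V_t))-B_\mu(t,X_t,\LL(X_t,V'_t))=\tilde\e\bigl[b_\mu(t,X_t,\LL(X_t),\alpha_t)(\tilde X_t)\cdot(\tilde V_t-\tilde V'_t)\bigr]$ gives a Lipschitz estimate in $\|V_t-V'_t\|_2$ directly, whereas the paper introduces a bespoke metric $d$ on joint laws with common first marginal that dominates $W_2$; either route works, but one of them must be spelled out, and \eqref{elementary_W2} alone is not the right tool here.

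A second, smaller omission: the bounds of \ref{A1}--\ref{A2} are only assumed for $(x,\mu)\in\C\times\mathcal P_2^\C(\R^d)$. When you expand along the path $\lambda\mapsto\bigl(X+\lambda(X^\epsilon-X),\LL(X+\lambda(X^\epsilon-X))\bigr)$, you must justify that the interpolants remain in $\C$ and their laws in $\mathcal P_2^\C$; this is exactly where the convexity of $\pi$ enters, via $\pi((1-\lambda)X_t+\lambda X^\epsilon_t)\le(1-\lambda)\pi(X_t)+\lambda\pi(X^\epsilon_t)\le0$. Without this observation the derivative bounds cannot be applied at the intermediate points, and the dominated-convergence step for the remainders has no majorant. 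With these two repairs your argument coincides with the paper's proof.
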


\begin{proof}
	We will start by showing that \eqref{variation_process} has a unique solution. For that purpose, we define
	\begin{align*}
	\mathcal{R}:=\left \{\mu\in C([0,T];\mathcal{P}_2(\R^d\times\R^d)),\enskip\text{such that}\enskip\mu_t\circ p_1^{-1}=\LL(X_t),\enskip\forall t\right \},
	\end{align*}
	where $p_1$ denotes the projector onto the first $d$-coordinates, namely
	\begin{align*}
	p_1\colon\R^d\times \R^d\rightarrow \R^d,\quad (x,y)\mapsto x.
	\end{align*}
	Clearly, if $\mu^n_t$ is a sequence converging weakly to $\mu_t$ for every $t\in [0,T]$, the constraint $\mu_t^n\circ p_1^{-1}=\LL(X_t),\forall t$ remains true for $\mu$ itself.
	Since the Wasserstein distance metrizes the weak topology, we see that $\mathcal{R}$ is closed in $C([0,T];\mathcal P_2(\R^d\times \R^d))$. 
	Next, define 
	\begin{align*}
	\Psi\colon \mathcal R\rightarrow \mathcal R,
	\end{align*}
	which maps $\mu \in C([0,T];\mathcal{P}_2(\R^d\times\R^d))$ to $(\LL(X_t,V_t))_{t\in[0,T]}$, where $(V_t)_{t\in [0,T]}$ is the unique solution to 
	\begin{equation}
	\label{eq:V}
	\left \{\begin{aligned}
	dV_t&=b_x(t,X_t,\LL(X_t),\alpha_t)\cdot V_t+b_\alpha(t,X_t,\LL(X_t),\alpha_t)\cdot \beta_t+B_\mu(t,X_t,\mu_t)dt\\
	&\quad + \sigma_x(t,X_t,\LL(X_t),\alpha_t)\cdot V_t+\sigma_\alpha(t,X_t,\LL(X_t),\alpha_t)\cdot \beta_t+\Sigma_\mu(t,X_t,\mu_t)dW_t\\
	Z_0&=0.
	\end{aligned}\right.
	\end{equation}
	For fixed $\mu \in C([0,T];\mathcal{P}_2(\R^d\times\R^d))$ we first need to check the existence of a unique solution $V$. But letting 
	\begin{align*}
	B(t,\omega,v,\mu,\alpha)
	&:=b_x(t,X_t(\omega),\mathcal{L}(X_t),\alpha_t)\cdot v+b_\alpha(t,X_t(\omega),\mathcal{L}(X_t),\alpha_t)\cdot \beta_t+B_\mu(t,X_t(\omega),\mu_t),
	\\
	\Sigma(t,\omega,v,\mu,\alpha)
	&:=\sigma_x(t,X_t(\omega),\mathcal{L}(X_t),\alpha_t)\cdot v+\sigma_\alpha(t,X_t(\omega),\mathcal{L}(X_t),\alpha_t)\cdot \beta_t+\Sigma_\mu(t,X_t(\omega),\mu_t),
	\end{align*} 
	we have the following properties:
	\begin{align*}
	\langle B(t,\omega,v,\mu,\alpha)-B(t,\omega,v',\mu,\alpha),v-v'\rangle&\leq A_1|v-v'|^2
	\\
	\int_{0}^{T}\sup_{|v|\leq c}|B(t,\omega,v,\mu,\alpha)|dt&<\infty,\quad
	\forall c\geq0,
	\end{align*}
	for all $t\in [0,T]$ and $\p$-almost every $\omega$. The first estimate is a result of Assumption \ref{ass:pontryagin} and the fact that $\p(X_t\in\C,\forall t)=1$. The second estimate follows from
	\begin{align*}
	|B(t,\omega,v,\mu,\alpha)|\leq C\Bigg\{(1+|X_t(\omega)|^{q-1})|v|+|\beta_t|+(1+|X_t(\omega)|)\iint_{\R^d\times \R^d}|y|\mu_t(dx\times dy)\Bigg\},
	\end{align*}
	together with the continuity of 
	$t\mapsto \iint_{\R^d\times \R^d}|y|\mu_t(dx\times dy),$
	and the uniform estimate \eqref{moment_est}.
	 Using \eqref{constraint_pi} we get with similar arguments 
	\begin{align*}
	|\Sigma(t,\omega,v,\mu,\alpha)-\Sigma(t,\omega,v',\mu,\alpha)|
	&\leq A_2|v-v'|,
	\\
	\int_{0}^{T}\sup_{|v|\leq c}|\Sigma(t,\omega,v,\mu,\alpha)|^2dt
	&<\infty,\quad\forall c\geq0,
	\end{align*}
	for all $t\in [0,T]$, $\p$-almost every $\omega$.
	It follows then by classical SDE results that \eqref{eq:V} is well-posed. Moreover, adapting the arguments yielding the moment estimates of Theorem \ref{thm:state}, it is shown \textit{mutatis mutandis} that  for $2\leq p\leq r$
	\begin{align*}
	\e\left(\sup_{0\leq t\leq T}|V_t|^p\right)<\infty.
	\end{align*}
	Therefore $(V_t)$ (and hence $\Psi(\mu)\equiv\LL(X,V)$) is uniquely determined by the probability measure $\mu$.\smallskip

	 We now aim to prove that $\Psi$ is a contraction, but for that purpose it is convenient to introduce another (stronger) metric. For any $\mu,\nu\in \mathcal P_2(\R^d\times\R^d)$ with $\mu\circ p_1^{-1}=\nu\circ p_1^{-1}$, we let
	 \[
	 d(\mu,\nu)^2:= \inf _{m\in\Lambda(\mu,\nu)}\iiint_{\R^d\times\R^d\times\R^d}|v-w|^2 m(dx\times dv\times dw)\,,
	 \]
	 where $\Lambda(\mu,\nu)$ is the set of all probability measures $m$ on $(\R^d)^3$ such that for any $A,B\in \mathcal B(\R^d)$
	 \[
	 m(A\times B\times \R^d)=\mu (A\times B)\quad\text{and}\quad  
	 m(A\times \R^d\times B)=\nu (A\times B).
	 \]
	 That $d$ is stronger than $W_2$ can be seen as follows. If $m$ is any element in $\Lambda(\mu,\nu)$, one can define 
	 \[
	 \rho(dx\times dv\times dy\times dw):=
	 m(dx\times dv\times dw)\delta_x(dy)
	 \]
	 where $\delta_x$ is the Dirac mass centered at $x$.
	 Clearly, $\rho$ belongs to the set of transport plans $\Pi(\mu,\nu)$ between $\mu$ and $\nu,$ so that in particular 
	 \[
	 W_2(\mu,\nu)=\inf _{\rho\in\Pi(\mu,\nu)}\iiiint\limits_{(\R^d)^4}(|x-y|^2+|v-w|^2 )\pi(dx\times dv\times dy\times dw) \leq \iiint\limits_{(\R^d)^3}|v-w|m(dx\times dv\times dw).
	 \]
	 Then, taking the infimum over all such $m$ yields our conclusion.

Next, let $m\in \Lambda (\mu,\nu)$. Using the marginal condition on $m$, we have 
	\begin{align*}
	&|B_\mu(t,X_t,\mu_t)-B_\mu(t,X_t,\nu_t)|
	\\
	&=
	\Big|\iint_{\R^d\times \R^d}b_\mu(t,X_t,\mathcal{L}(X_t),\alpha_t)(x)\cdot v \mu (dx\times dv)
	\\
	&\quad \quad \quad \quad 
	-\iint_{\R^d\times \R^d}b_\mu(t,X_t,\mathcal{L}(X_t),\alpha_t)\cdot w\nu(dx\times dw)\Big|
	\\
	&=
	\Big|\iiint_{\R^d\times\R^d\times  \R^d}b_\mu(t,X_t,\mathcal{L}(X_t),\alpha_t)(x)\cdot v m (dx\times dv\times dw)
	\\
	&\quad\quad \quad \quad 
	 -\iiint_{\R^d\times \R^d\times \R^d}b_\mu(t,X_t,\mathcal{L}(X_t),\alpha_t)\cdot wm(dx\times dv\times dw)\Big|
	\,.
	\end{align*}
	Thus, \[
	|B_\mu(t,X_t,\mu_t)-B_\mu(t,X_t,\nu_t)|=\Big|\iiint_{\R^d\times\R^d\times  \R^d}b_\mu(t,X_t,\mathcal{L}(X_t),\alpha_t)(x)\cdot (v-w) m(dx\times dv\times dw)\Big|\,.
	\]
	Since $m$ is arbitrary, we obtain
	\begin{align*}
|B_\mu(t,X_t,\mu_t)-B_\mu(t,X_t,\nu_t)|\leq A_1(1+|X_t|)d(\mu_t,\nu_t)\,,
	\end{align*}
	and a similar result can be shown for $\Sigma_\mu$. Now, if we equip $\mathcal R$ with a metric $\delta$ inherited from $d,$ for instance $\delta(\mu,\nu):=\sup_{t\in [0,T]}e^{-\gamma t}d(\mu_t,\nu_t)$ for $\gamma>0$ large enough, the proof that $\Psi$ is a contraction follows with simple arguments. Since it is similar to the proof of Theorem \ref{thm:state}, we omit the details.\smallskip

	Let now $\alpha,\beta\in \A$ and $\epsilon>0$ small enough, such that $\alpha+\epsilon\beta \in \A$. By $X$ we denote the solution of \eqref{state} with respect to $\alpha$ and by $X^\epsilon$ we denote the solution to \eqref{state} with respect to $\alpha+\epsilon\beta$. Furthermore for $\lambda\in [0,1]$ we introduce $X^{\lambda,\epsilon}:=X+\lambda(X^\epsilon-X)$ and $\alpha^{\lambda,\epsilon}:=\alpha+\lambda\epsilon\beta$. 
	Note that, since $\pi$ is convex, we have 
	\begin{align}
	\label{X_lambda_C}
	\pi(X_t+\lambda(X_t^\epsilon-X_t))=\pi((1-\lambda)X_t+\lambda X_t^\epsilon)\leq (1-\lambda)\pi(X_t)+\lambda \pi(X_t^\epsilon)\leq 0\,,
	\end{align}
	hence $X_t^{\lambda,\epsilon}$ is supported in $\C $.

	Next, by Lemma \ref{lem:lipshitz} we get 
	\begin{align*}
	\e\left(\sup_{\lambda\in[0,1]}\sup_{t\in [0,T]}|X_t^{\lambda,\epsilon}-X_t|^2\right)&\leq \hat{C}_{L,T}\epsilon^2\int_{0}^{T}|\beta_t|^2dt\,.
	\end{align*}
	Thus, we can conclude that $X^{\lambda,\epsilon}\underset{\epsilon\to 0}{\longrightarrow} X$ in $L^2(\Omega,C([0,T];\R^d))$, uniformly in $\lambda$. By a simple Taylor expansion we get
	\begin{multline*}
	b(t,X_t^\epsilon,\LL(X_t^\epsilon),\alpha_t+\epsilon\beta_t)
	\\
	=b(t,X_t,\LL(X_t),\alpha_t)+[b_x]^\epsilon_{t}\cdot (X_t^\epsilon-X_t)
	+\epsilon [b_\alpha]^\epsilon_{t}\cdot \beta_t
	+\tilde{\e}\left([b_\mu]^\epsilon\cdot \widetilde{(X_t^\epsilon-X_t)}\right)
	\end{multline*}
	where, given $\varphi=\varphi(t,x,\mu,\alpha)(\tilde x)$ we use the shorthand notation
	\[	[\varphi]^\epsilon_{t}:=\int_{0}^{1}\varphi\left (t,X_t^{\lambda,\epsilon},\LL(X_t^{\lambda,\epsilon}),\alpha_t^{\lambda,\epsilon}\right )\left (\tilde X^{\lambda,\epsilon}_t\right )d\lambda \,,\]
	with the convention that the last input is ignored whenever $\varphi$ does not depend on the tilde variable.
	Similarly, we have 
	\begin{multline*}
	\sigma(t,X_t^\epsilon,\LL(X_t^\epsilon),\alpha_t+\epsilon\beta_t)
	\\
	=\sigma(t,X_t,\LL(X_t),\alpha_t)+[\sigma_x]^\epsilon_{t}\cdot (X_t^\epsilon-X_t)+\epsilon[\sigma_\alpha]^\epsilon_{t}\cdot \beta_t+\tilde{\e}\left([\sigma_\mu]^\epsilon\cdot \widetilde{(X_t^\epsilon-X_t)}\right).
	\end{multline*}
	Thus, for $\varDelta ^\epsilon_t:=\dfrac{X_t^\epsilon-X_t}{\epsilon}-Z_t^{\alpha,\beta}$ we have
	\begin{align*}
	d\varDelta_t^\epsilon
	&=\Bigg\{[b_x]^\epsilon_{t}\cdot \varDelta_t^\epsilon+\tilde{\e}\left([b_\mu]^\epsilon_{t}\cdot \tilde{\varDelta}_t^\epsilon\right) 
	+ ([b_x]^\epsilon_{t}-b_x(t,X_t,\LL(X_t),\alpha_t))\cdot Z_t^{\alpha,\beta}
	\\
	&\quad\quad \quad 
	 +\epsilon([b_\alpha]^\epsilon_{t}- b_\alpha(t,X_t,\LL(X_t),\alpha_t))\cdot \beta_t\\
	&\quad\quad \quad 
	 +\tilde{\e}\left(([b_\mu]^\epsilon_{t}-b_\mu(t,X_t,\LL(X_t),\alpha_t)(\tilde{X}_t))\cdot \tilde{Z}_t^{\alpha,\beta}\right)\Bigg\}dt\\
	&\quad 
	+ \Bigg\{[\sigma_x]^\epsilon_{t}\cdot \varDelta_t^\epsilon+\tilde{\e}\left([\sigma_\mu]^\epsilon_{t}\cdot \tilde{\varDelta}_t^\epsilon\right) + ([\sigma_x]^\epsilon_{t}-\sigma_x(t,X_t,\LL(X_t),\alpha_t))\cdot Z_t^{\alpha,\beta}\\
	&\quad \quad \quad 
	 +\epsilon([\sigma_\alpha]^\epsilon_{t}- \sigma_\alpha(t,X_t,\LL(X_t),\alpha_t))\cdot \beta_t\\
	&\quad\quad \quad 
	 +\tilde{\e}\left(([\sigma_\mu]^\epsilon_{t}-\sigma_\mu(t,X_t,\LL(X_t),\alpha_t)(\tilde{X}_t))\cdot \tilde{Z}_t^{\alpha,\beta}\right)\Bigg\}dW_t.
	\end{align*}
	By It\^o formula, \eqref{X_lambda_C} and Assumption \ref{ass:pontryagin}, we get 
	\begin{align*}
	d\left (\frac{|\varDelta_t^\epsilon|^2}{2}\right )
	&\leq \bigg\{A_1 |\varDelta_t^\epsilon|^2 +\tilde{\e}\left(|[b_\mu]^\epsilon_{t}||\tilde{\varDelta}_t^\epsilon|\right)|\varDelta_t^\epsilon|\\
	&\quad \quad \quad 
	+|[b_x]^\epsilon_{t}-b_x(t,X_t,\LL(X_t),\alpha_t)||Z_t^{\alpha,\beta}||\varDelta_t^\epsilon|\\
	&\quad \quad \quad 
	+\epsilon|[b_\alpha]^\epsilon_{t}- b_\alpha(t,X_t,\LL(X_t),\alpha_t)||\beta_t||\varDelta_t^\epsilon|\\
	&\quad \quad \quad 
	+ \tilde{\e}\left(|[b_\mu]^\epsilon_{t}-b_\mu(t,X_t,\LL(X_t),\alpha_t)(\tilde{X}_t)||\tilde{Z}_t^{\alpha,\beta}|\right)|\varDelta_t^\epsilon|\bigg\}dt\\
	&\quad 
	+ \Bigg\langle \varDelta_t^\epsilon,\Bigg([\sigma_x]^\epsilon_{t}\cdot \varDelta_t^\epsilon+\tilde{\e}\left(\sigma_\mu^\epsilon(\varDelta)\cdot \tilde{\varDelta}_t^\epsilon\right) 
	+([\sigma_x]^\epsilon_{t}-\sigma_x(t,X_t,\LL(X_t),\alpha_t))\cdot Z_t^{\alpha,\beta}\\
	&\quad\quad \quad 
	 +\epsilon([\sigma_\alpha]^\epsilon_{t}- \sigma_\alpha(t,X_t,\LL(X_t),\alpha_t))\cdot \beta_t\\
	&\quad \quad \quad 
	+\tilde{\e}\left(([\sigma_\mu]^\epsilon_{t}-\sigma_\mu(t,X_t,\LL(X_t),\alpha_t)(\tilde{X}_t))\cdot \tilde{Z}_t^{\alpha,\beta}\right)\Bigg)dW_t\Bigg\rangle\\
	&\quad 
	+\frac52\bigg\{A_2|\varDelta_t^\epsilon|^2 +\Big(\int _0^1A_2(1+|X_t^{\lambda,\epsilon}|^2)d\lambda\Big)\tilde{\e}\left(|\tilde{\varDelta}_t^\epsilon|\right)^2\\
	&\quad \quad \quad 
	+|[\sigma_x]^\epsilon_{t}-\sigma_x(t,X_t,\LL(X_t),\alpha_t)|^2|Z_t^{\alpha,\beta}|^2\\
	&\quad \quad \quad 
	+\epsilon^2|[\sigma_\alpha]^\epsilon_{t}- \sigma_\alpha(t,X_t,\LL(X_t),\alpha_t)|^2|\beta_t|^2\\
	&\quad \quad \quad 
	+ \tilde{\e}\left(|[\sigma_\mu]^\epsilon_{t}-\sigma_\mu(t,X_t,\LL(X_t),\alpha_t)(\tilde{X}_t)||\tilde{Z}_t^{\alpha,\beta}|\right)^2\bigg\}dt.
	\end{align*}
	By Young Inequality, Jensen Inequality and assumption \ref{A1} we have 
	\begin{align*}
	\tilde{\e}\left(|[b_\mu]^\epsilon_{t}||\tilde{\varDelta}_t^\epsilon|\right)|\varDelta_t^\epsilon|&\leq \dfrac{1}{2}\left(\tilde{\e}\left(|[b_\mu]^\epsilon_{t}|^2|\tilde{\varDelta}_t^\epsilon|^2\right)+|\varDelta_t^\epsilon|^2\right)\\
	&\leq \dfrac{A_1}{2}\left(\int_{0}^{1}(1+|X_t^{\lambda,\epsilon}|^2)d\lambda\right )\tilde{\e}\left(|\tilde{\varDelta}_t^\epsilon|^2\right)+\frac12|\varDelta_t^\epsilon|^2.
	\end{align*}
	Since $\epsilon>0$ is chosen in a way that $\alpha+\epsilon\beta\in \mathbb{A}$, we can conclude by the a priori bound \eqref{moment_est} and the definition of $\mathbb{A}$, that 
	\begin{align*}
	\e\left(\sup_{s\in [0,t]}\tilde{\e}\left(|[b_\mu]^\epsilon_{s}||\tilde{\varDelta}_s^\epsilon|\right)|\varDelta_s^\epsilon|\right)
	\leq C(T,K,\|X_0\|_p)\tilde{\e}\left(\sup_{s\in [0,t]}|\tilde{\varDelta}_s^\epsilon|^2\right)
	+\e\left(\sup_{s\in [0,t]}\frac{|\varDelta_s^\epsilon|^2}{2}\right),
	\end{align*}
	for some constant $C(T,K,\|X_0\|_p)>0$ which does not depend on $\epsilon$. 
	 By the Burkholder-Davis-Gundy inequality, Young and Jensen inequalities we arrive at 
	\begin{align*}
	\e\left(\sup_{t\in [0,T]}|\varDelta_t^\epsilon|^2\right)
	&\leq I_1+I_2+I_3+I_4+I_5+I_6 + C\int_{0}^{T}\e\left(\sup_{s\in [0,t]}|\varDelta_s^\epsilon|^2\right)ds,
	\end{align*}
	for a constant $C>0$ which does not depend on $\epsilon$ and 
	\begin{align*}
	I_1&=\e\left(\int_{0}^{T}|[b_x]^\epsilon_{t}-b_x(t,X_t,\LL(X_t),\alpha_t)|^2|Z_t^{\alpha,\beta}|^2dt\right)\\
	I_2&=\epsilon^2\e\left(\int_{0}^{T}|[b_\alpha]^\epsilon_{t}-b_\alpha(t,X_t,\LL(X_t),\alpha_t)|^2|\beta_t|^2dt\right)\\
	I_3&=\e\left(\int_{0}^{T}\tilde{\e}\left(|[b_\mu]^\epsilon_{t}-b_\mu(t,X_t,\LL(X_t),\alpha_t)(\tilde{X}_t)|^2|\tilde{Z}_t^{\alpha,\beta}|^2\right)dt\right)
	\end{align*}
	and $I_4,I_5,I_6$ are analogues for $\sigma$. We will only show $I_1\rightarrow 0$ as $\epsilon\rightarrow 0$, the other terms being handled by similar arguments. By assumption \ref{A1} we have
	\begin{align*}
	|[b_x]^\epsilon_{t}-b_x(t,X_t,\LL(X_t),\alpha_t)|^4\leq C(1+|X_t|^{4q-4}+|X_t^{\lambda,\epsilon}|^{4q-4}).
	\end{align*}
	Furthermore we have for any $p\leq r$ that
	\begin{align*}
	\e\left(\sup_{0\leq t\leq T}|X_t^{\lambda,\epsilon}|^p\right)&\leq C_p\Bigg\{(1+\lambda^p) \e\left(\sup_{0\leq t\leq T}|X_t^{\epsilon}|^p\right)+\lambda^p\e\left(\sup_{0\leq t\leq T}|X_t|^p\right)\Bigg\},
	\end{align*}
	is bounded from above by some constant that does not depend on $\epsilon$ for $\epsilon>0$ small enough. Since  $X^{\lambda,\epsilon}\rightarrow X$ in $L^2(\Omega;C([0,T];\R^d))$, by the a-priori bound \eqref{moment_est}, the estimate $\e\left(\sup_{t\in [0,T]}|Z_t|^4\right)<\infty$, the continuity of $b_x$ and the dominated convergence theorem, one concludes that $I_1\rightarrow 0$ as $\epsilon\rightarrow 0$. Similar arguments combined with Gronwall's lemma finish the proof.
\end{proof}

As an important consequence, we obtain the following formula for the G\^ateaux derivative of the cost functional.
Given Lemma \ref{lem:variation}, the next result is proven in the same way as its done in \cite{carmona2018probabilistic} and thus omitted.

\begin{corollary}
	The cost functional
	\begin{align*}
	J\colon\A\rightarrow \R
	\end{align*}
	is G\^ateaux differentiable and its G\^ateaux derivative at $\alpha\in \A$ in direction $\beta\in \A$ is given by
	\begin{align*}
	dJ(\alpha)\cdot \beta
	&=\e\left(f_x(t,X_t,\LL(X_t),\alpha_t)\cdot Z_t^{\alpha,\beta}+f_\alpha(t,X_t,\LL(X_t),\alpha_t)\cdot \beta_t\right)\\
	&\quad +\e\left(\tilde{\e}\left(f_\mu(t,X_t,\LL(X_t),\alpha_t)(\tilde{X}_t)\cdot \tilde{Z}_t^{\alpha,\beta}\right)\right)\\
	&\quad +  \e\left(g_x(X_T,\LL(X_T))\cdot Z_t^{\alpha,\beta}+\tilde{\e}\left( g_\mu(X_T,\LL(X_T))(\tilde{X}_T)\cdot \tilde{Z}_T^{\alpha,\beta}\right)\right).
	\end{align*}
\end{corollary}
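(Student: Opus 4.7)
The plan is to start from the definition of the Gâteaux derivative
\[
dJ(\alpha)\cdot\beta \;=\; \lim_{\epsilon\to 0}\frac{J(\alpha+\epsilon\beta)-J(\alpha)}{\epsilon},
\]
and reduce it to Lemma \ref{lem:variation} via a first-order Taylor expansion of the running and terminal costs. Set, in keeping with the notation of the previous proof, $X:=X^\alpha$, $X^\epsilon:=X^{\alpha+\epsilon\beta}$, $X_t^{\lambda,\epsilon}:=X_t+\lambda(X_t^\epsilon-X_t)$, $\alpha_t^{\lambda,\epsilon}:=\alpha_t+\lambda\epsilon\beta_t$, and $\varDelta_t^\epsilon:=\epsilon^{-1}(X_t^\epsilon-X_t)-Z_t^{\alpha,\beta}$. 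Lemma \ref{lem:variation} tells us that $\varDelta^\epsilon\to 0$ in $\mathcal{S}^2$ as $\epsilon\to0$, while the moment bound \eqref{moment_est} yields a uniform-in-$\epsilon$ (and in $\lambda\in[0,1]$) estimate for $\e(\sup_t|X_t^{\lambda,\epsilon}|^p)$ for every $p\le r$.

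For the running cost, I would use the fundamental theorem of calculus along the interpolation $\lambda\in[0,1]$, together with the formula for the derivative of $\mu\mapsto f(t,x,\mu,\alpha)$ at $\LL(X_t^{\lambda,\epsilon})$ applied to the direction given by $X_t^\epsilon-X_t$: this gives
\begin{align*}
&\tfrac1\epsilon\bigl[f(t,X_t^\epsilon,\LL(X_t^\epsilon),\alpha_t+\epsilon\beta_t)-f(t,X_t,\LL(X_t),\alpha_t)\bigr]\\
&=\int_0^1\Bigl\{f_x(t,X_t^{\lambda,\epsilon},\LL(X_t^{\lambda,\epsilon}),\alpha_t^{\lambda,\epsilon})\cdot\tfrac1\epsilon(X_t^\epsilon-X_t)
+f_\alpha(t,X_t^{\lambda,\epsilon},\LL(X_t^{\lambda,\epsilon}),\alpha_t^{\lambda,\epsilon})\cdot\beta_t\\
&\qquad\qquad+\tilde\e\bigl[f_\mu(t,X_t^{\lambda,\epsilon},\LL(X_t^{\lambda,\epsilon}),\alpha_t^{\lambda,\epsilon})(\tilde X_t^{\lambda,\epsilon})\cdot\tfrac1\epsilon(\tilde X_t^\epsilon-\tilde X_t)\bigr]\Bigr\}\,d\lambda\,.
\end{align*}
One then replaces $\epsilon^{-1}(X_t^\epsilon-X_t)$ by $Z_t^{\alpha,\beta}+\varDelta_t^\epsilon$ (and analogously under $\tilde\e$), integrates over $(t,\omega)$, and passes to the limit $\epsilon\to 0$. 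Because $X_t^{\lambda,\epsilon}\to X_t$ in $\mathcal S^2$ uniformly in $\lambda$ (Lemma \ref{lem:lipshitz}) and the integrands $f_x,f_\alpha,f_\mu$ are continuous in their arguments, the integrands converge a.s.; the $\varDelta^\epsilon$ contributions vanish in $L^2$ by Lemma \ref{lem:variation}.

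The main technical point is to justify the interchange of the three limits/integrations (over $\lambda$, $t$, and $\omega$ together with the tilde-expectation) by dominated convergence. Here assumption \ref{A3} is crucial: combined with the uniform moment bounds $\sup_{\epsilon,\lambda}\e(\sup_t|X_t^{\lambda,\epsilon}|^p)<\infty$ (for some $p>2$), Cauchy--Schwarz and Jensen's inequality, it yields an $\epsilon$-independent $L^1$ envelope for each of the three terms above; note that the Lions-derivative term is controlled via $\tilde\e(|f_\mu(\cdot)(\tilde X_t^{\lambda,\epsilon})|^2)\le A_3^2(1+R)^2$ on the random sub-level set $\{|X_t^{\lambda,\epsilon}|\vee\mathcal M_2(\LL(X_t^{\lambda,\epsilon}))\vee|\alpha_t^{\lambda,\epsilon}|\le R\}$, and by ordinary moment bounds one gets an $L^1$-bound by taking $R$ of polynomial size in the state. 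This produces the claimed integrals with $f_x,f_\alpha,f_\mu$ evaluated at $(t,X_t,\LL(X_t),\alpha_t)$ and paired with $Z_t^{\alpha,\beta}$, $\beta_t$, $\tilde Z_t^{\alpha,\beta}$ respectively. An entirely analogous (and simpler) treatment applied at time $T$ to $g(X_T^\epsilon,\LL(X_T^\epsilon))-g(X_T,\LL(X_T))$ delivers the $g_x$ and $g_\mu$ contributions, completing the claim.
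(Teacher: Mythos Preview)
Your proof sketch is correct and follows precisely the standard route that the paper has in mind: the authors do not give their own argument here but simply write ``Given Lemma \ref{lem:variation}, the next result is proven in the same way as it is done in \cite{carmona2018probabilistic} and thus omitted.'' Your expansion via the interpolation $X^{\lambda,\epsilon}$, the replacement $\epsilon^{-1}(X^\epsilon-X)=Z^{\alpha,\beta}+\varDelta^\epsilon$, and the dominated-convergence justification based on \ref{A3} together with the uniform moment bounds \eqref{moment_est} is exactly that argument spelled out.
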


\subsection{Maximum Principle}

For the reader's convenience, we now rewrite the adjoint equation of section \ref{sec:intro} using Hamiltonian formalism. Recall that for $x,y,p\in\R^d,$ $q\in\R^{d\times m}$ $\mu\in \mathcal P_2$ and $\alpha\in A,$ we introduced the quantity
\begin{align*}
H(t,x,\mu,p,q,\alpha):=\langle b(t,x,\mu,\alpha),p\rangle+\langle\sigma(t,x,\mu,\alpha), q\rangle +f(t,x,\mu,\alpha)\,.
\end{align*}
Thus, given a control $\alpha\in\A,$ one sees that the pair $(P,Q)\in \mathcal{S}^{2,d}\times \mathcal{H}^{2,d\times m}$ solves the adjoint equation if and only if for all $t\in [0,T]$, $\p$-almost surely
\begin{equation}\label{adjoint_H}
\left \{\begin{aligned}
	dP_t&=-\left [H_x(t,X_t,\LL(X_t),P_t,Q_t,\alpha_t) + \mathbb{\tilde E}\left (H_\mu(t,X_t,\LL(X_t),\tilde P_t,\tilde Q_t,\alpha_t)(\tilde X_t)\right )\right ]dt
	+Q_tdW_t
	\\
	P_T&=g_x(X_T,\LL(X_T))+\tilde{\e}\left(g_\mu(X_t,\LL(X_T))(\tilde X_T)\right).
\end{aligned}\right .
\end{equation}
where $(\tilde X,\tilde P,\tilde Q,\tilde \alpha)$ is an independent copy of $(X,P,Q,\alpha)$ on the space $(\tilde\Omega,\mathcal{\tilde F},\mathbb{\tilde P}).$

Let us point out that the above coefficients fail to satisfy \cite[Assumption MKV SDE, Chap.~4]{carmona2018probabilistic}.
Hence, we first need to address the solvability of the BSDE \eqref{adjoint_H} under the assumptions of Theorem \ref{thm:max}. 
\begin{lemma}
	\label{lem:adjoint}
	Under the assumptions of Theorem \ref{thm:max},
there exists a unique solution $(P,Q)\in \mathcal{S}^2\times \mathcal{H}^{2,d\times m}$ of \eqref{adjoint_H}.
\end{lemma}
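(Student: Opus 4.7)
The plan is to handle the McKean--Vlasov coupling by a fixed-point argument, and to solve each ``frozen'' instance as a classical linear BSDE whose coefficient in front of $P$ is only one-sided Lipschitz but whose coefficient in front of $Q$ is bounded.

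\textbf{Decoupling.} I would first introduce the map $\Phi\colon \mathcal{S}^{2,d}\times\mathcal{H}^{2,d\times m} \to \mathcal{S}^{2,d}\times\mathcal{H}^{2,d\times m}$ sending a candidate $(P',Q')$ to the solution $(P,Q)$ of the linear BSDE
\begin{equation*}
\left\{\begin{aligned}
dP_t &= -\{b_x(t,X_t,\LL(X_t),\alpha_t)^{\dagger}P_t + \sigma_x(t,X_t,\LL(X_t),\alpha_t)^{\dagger}Q_t + \xi^{P',Q'}_t\}dt + Q_t dW_t, \\
P_T &= g_x(X_T,\LL(X_T)) + \tilde{\e}\bigl(g_\mu(X_T,\LL(X_T))(\tilde X_T)\bigr),
\end{aligned}\right.
\end{equation*}
where $\xi^{P',Q'}_t$ collects $f_x$ and the mean-field contribution obtained by inserting independent copies $(\tilde P',\tilde Q')$ in place of $(\tilde P,\tilde Q)$ in \eqref{adjoint_H}. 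Using the growth bounds of \ref{A1}--\ref{A3}, the moment estimate \eqref{moment_est} of Theorem \ref{thm:state} with $r\geq 4q\geq 4$, together with the Cauchy--Schwarz bound $|\tilde \e(f_\mu(\cdot)(\tilde X_t))|^2 \leq \tilde \e(|f_\mu(\cdot)(\tilde X_t)|^2) \leq A_3(1+|X_t|+\mathcal M_2(\LL(X_t))+|\alpha_t|)$, I would verify $\xi^{P',Q'} \in \mathcal{H}^{2,d}$ and $P_T \in L^2(\Omega;\R^d)$.

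\textbf{Well-posedness of each frozen BSDE.} Since $X_t \in \C$ almost surely by Lemma \ref{lem:constraint}, assumption \ref{A1} gives the one-sided bound $\langle b_x(t,X_t,\LL(X_t),\alpha_t)^{\dagger}p,p\rangle \leq A_1 |p|^2$, and \ref{A2} gives $|\sigma_x(t,X_t,\LL(X_t),\alpha_t)| \leq A_2$. A Girsanov transform of the form $d\mathbb{Q}/d\p = \mathcal{E}(\int \sigma_x^{\dagger}(\cdots)\,dW)$ (Novikov trivially holds since $\sigma_x$ is bounded) removes the $\sigma_x^{\dagger}Q$ term, reducing matters to a linear BSDE whose driver satisfies a one-sided Lipschitz condition in $P$. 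Applying It\^o to $e^{-2A_1 t}|P_t|^2$ then yields the a priori estimate
\begin{equation*}
\e\Big(\sup_{t\in[0,T]}|P_t|^2\Big) + \e\Big(\int_0^T |Q_t|^2\,dt\Big) \leq C(A_1,A_2,T)\Big(\e(|P_T|^2) + \e\Big(\int_0^T |\xi^{P',Q'}_t|^2\,dt\Big)\Big),
\end{equation*}
from which uniqueness is immediate. Existence follows by a standard truncation scheme (replacing $b_x^{\dagger}$ by its projection on balls of radius $n$, solving the resulting Lipschitz BSDE, and passing to the limit $n\to\infty$ using the above estimate).

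\textbf{Contraction.} The crux is then to show that $\Phi$ is a contraction in the weighted norm
\begin{equation*}
\|(P,Q)\|_{\gamma}^2 := \e\Big(\sup_{t\in[0,T]}e^{-\gamma t}|P_t|^2\Big) + \e\Big(\int_0^T e^{-\gamma t}|Q_t|^2\,dt\Big),
\end{equation*}
for $\gamma$ sufficiently large. Writing It\^o for $e^{-\gamma t}|P^1_t-P^2_t|^2$ with $(P^i,Q^i)=\Phi(P'^{\,i},Q'^{\,i})$, the key input is the pointwise estimate
\begin{equation*}
\bigl|\tilde{\e}\bigl(b_\mu(t,X_t,\LL(X_t),\alpha_t)(\tilde X_t)^{\dagger}(\tilde P'^{\,1}_t-\tilde P'^{\,2}_t)\bigr)\bigr| \leq A_1(1+|X_t|)\,\tilde{\e}(|\tilde P'^{\,1}_t-\tilde P'^{\,2}_t|^2)^{1/2},
\end{equation*}
with the analogous bound for the $\sigma_\mu$ term using \ref{A2}. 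The weight $(1+|X_t|)^2$ is uniformly integrable thanks to \eqref{moment_est}; hence choosing $\gamma$ larger than a constant depending on $A_1$, $A_2$ and $\sup_t \e(1+|X_t|^2)$, the term $\|(P',Q')\|_\gamma^2$ comes with a prefactor strictly less than $1$, giving contractivity. Banach's fixed-point theorem then delivers the unique $(P,Q) \in \mathcal{S}^{2,d}\times\mathcal{H}^{2,d\times m}$ solving \eqref{adjoint_H}.

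\textbf{Main obstacle.} The essential difficulty is that the coefficient $b_x^{\dagger}$ in front of $P$ is only polynomially bounded (of order $|X|^{q-1}$) and not globally Lipschitz; it is only the one-sided bound in \ref{A1}, combined with the invariance of $\C$ under the state dynamics and the moment estimate $\e(\sup_t|X_t|^r)<\infty$ with $r\geq 4q$, that makes the backward Gronwall estimate work. A subsidiary point is that the mean-field terms must pair the boundedness in \ref{A2} (for $\sigma_\mu$) and the linear growth in \ref{A1} (for $b_\mu$) against the $\mathcal H^2$-integrability of $\tilde P, \tilde Q$; this is what dictates the choice of the exponentially weighted norm in the contraction step.
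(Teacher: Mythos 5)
Your proposal follows essentially the same route as the paper: freeze the McKean--Vlasov term using an input pair, solve the resulting standard monotone BSDE, and close the argument with a Banach fixed point in an exponentially weighted norm. The paper's map $\Gamma$ differs from your $\Phi$ only in that it keeps $H_x(t,\omega,P,Q)$ inside the frozen equation and simply cites standard results for monotone BSDEs where you spell out a Girsanov reduction and a truncation scheme; both are fine. One point needs fixing: for a \emph{backward} equation the exponential weight must grow in $t$. Applying It\^o to $e^{\gamma t}|P^1_t-P^2_t|^2$ and integrating from $t$ to $T$ puts the term $\gamma\int_t^T e^{\gamma s}|P^1_s-P^2_s|^2\,ds$ on the favorable side of the identity, where it absorbs the driver contributions and forces the frozen source to appear with a prefactor of order $1/\gamma$; with your weight $e^{-\gamma t}$ the analogous term $\gamma\int_t^T e^{-\gamma s}|P^1_s-P^2_s|^2\,ds$ lands on the wrong side and cannot be absorbed, so the contraction constant does not become small as $\gamma\to\infty$. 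Replacing $e^{-\gamma t}$ by $e^{\gamma t}$ (the paper's norm $\interleave(P,Q)\interleave_\gamma$) repairs the step; the rest of your argument, in particular the bounds $A_1(1+|X_t|)$ and $A_2(1+|X_t|)$ on the $b_\mu$ and $\sigma_\mu$ kernels paired with the moment estimate \eqref{moment_est} and Lemma \ref{lem:constraint}, matches the paper's estimates.
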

\begin{proof}
	Fix $\alpha\in \A$ and for simplicity, denote by $H_x(t,\omega,p,q):=H_x(t,X_t(\omega),\mathcal{L}(X_t),p,q,\alpha_t)$ and by $H_\mu(t,\omega,x,p,q):=H_\mu(t,x,\mathcal{L}(X_t),p,q,\alpha_t)(X_t(\omega)).$
	Consider the map $\Gamma:\mathcal{H}^{2,d}\times \mathcal{H}^{2,d\times m}\rightarrow \mathcal{H}^{2,d}\times \mathcal{H}^{2,d\times m}$ which maps a given pair \[ (Y,Z)\in \mathcal{H}^{2,d}\times \mathcal{H}^{2,d\times m} \]
	to the solution $(P,Q)$ of 
	\begin{equation}\label{upper}
	\left \{\begin{aligned}
	dP_t&=-\left[ H_x(t,\omega,P_t,Q_t)+\e\left(H_\mu(t,\omega,X_t,Y_t,Z_t)\right)\right]dt+ QdW_t
	\\
	P_T&=g_x(X_T(\omega),\LL(X_T))+\tilde{\e}\left(g_\mu(\tilde{X}_t,\LL(X_T))(X_T)\right),
	\end{aligned}\right .
	\end{equation}
	where the expectation is to be understood in the following way: 
	\begin{align*}
	\e\left(H_\mu(t,\omega,X_t,Y_t,Z_t)\right)=\int_{\Omega}H_\mu(t,\omega,X_t(\omega'),Y_t(\omega'),Z_t(\omega'))\p(d\omega').
	\end{align*}
	In the following we drop the dependence on $\omega$ for $H_\mu$.
	
	Since the above equation is a standard backward SDE with monotone coefficients, the existence of a solution is well-known by standard results. 
	We will now show that the map $\Gamma$ is a contraction, when the space $\mathcal{H}^{2,d}\times \mathcal{H}^{2,d\times m}$ is equipped with the norm 
	\[ \interleave (P,Q)\interleave_\gamma :=\left(\int_{0}^{T}e^{\gamma t}(\|P_t\|_2^2+\|Q_t\|_2^2)dt\right )^{1/2} ,\] for a sufficiently large parameter $\gamma>0$. If we denote by $(P^1,Q^1),(P^2,Q^2)$ two solutions of \eqref{upper} for $(Y^1,Z^1)$ and $(Y^2,Z^2)$ respectively, then by the backward It\^o Formula \cite[p.\ 356]{pardoux2014stochastic} applied to $e^{\gamma t}|P^1_t-P^2_t|^2$ we get 
	\begin{multline}
	\label{ito_backward}
	|P^1_t-P^2_t|^2 
	+\e\left(\int_{t}^{T}\gamma e^{\gamma (r-t)}|P^1_r-P^2_r|^2dr\enskip\Bigg|\enskip\mathcal{F}_t\right)+\e\left(\int_{t}^{T} e^{\gamma (r-t)}|Q^1_r-Q^2_r|^2dr\enskip\Bigg|\enskip\mathcal{F}_t\right)
	\\
	\leq 2\e\Bigg(\int_{t}^{T}e^{\gamma (r-t)}\Bigg\{\big( H_x(t,P^1_r,Q^1_r)- H_x(r,P^2_r,Q^2_r)\big)\cdot(P^1_r-P^2_r) +|P^1_r-P^2_r|\times
	\\ \int_{\Omega}|H_\mu(r,X_r(\omega'),Y_r^1(\omega'),Z_r^1(\omega'))-H_\mu(r,X_r(\omega'),Y_r^2(\omega'),Z_r^2(\omega'))|\p(d\omega')\Bigg\}dr\enskip\Bigg|\enskip\mathcal{F}_t\Bigg).
	\end{multline}
	From assumptions \ref{A1},\ref{A2}, Young's inequality and Lemma \ref{lem:constraint}, we infer that
	\begin{align*}
	&\|(H_x(t,\omega,P^1,Q^1)
	-H_x(t,\omega,P^2,Q^2))\cdot (P^1-P^2)\|_1
	\\
	&\quad \quad 
	\leq (A_1+A_2^2)
	\|P^1-P^2\|_2^2+\frac{1}{4}\|Q^1-Q^2\|_2^2
	\intertext{and}
	&\int_{\Omega}|H_\mu(r,X_r(\omega'),Y_r^1(\omega'),Z_r^1(\omega'))-H_\mu(r,X_r(\omega'),Y_r^2(\omega'),Z_r^2(\omega'))|\p(d\omega')
	\\
	&\quad \quad 
	\leq (A_1\vee A_2)(1+\|X_r\|_2^2)^\frac{1}{2}(\|Y_r^1-Y_r^2\|_2+\|Z_r^1-Z_r^2\|_2)\,.
	\end{align*}
	Invoking \eqref{moment_est}, Cauchy-Schwarz and Young Inequalities, we can conclude that
	\begin{align*}
	&\int_{0}^{T}\gamma e^{\gamma r}\|P^1_r-P^2_r\|_2^2dr+\int_{0}^{T} e^{\gamma r}\|Q^1_r-Q^2_r\|_2^2dr\\
	&\leq 2(A_1+A_2^2)\int_{0}^{T}e^{\gamma r}\|P_r^1-P_r^2\|_2^2dr+\frac{1}{2}\int_{0}^{T}e^{\gamma r}\|Q_r^1-Q_r^2\|^2_2dr\\
	&\quad + \frac{1}{2}\int_{0}^{T}e^{\gamma r}\|P^1_r-P^2_r\|_2^2dr
	+C(A,\|X_0\|_2,K)\int_{0}^{T}e^{\gamma r}\left(\|Y_r^1-Y_r^2\|_2^2+\|Z_r^1-Z_r^2\|_2^2\right)dr.
	\end{align*}
	For $\gamma$ large enough this leads to 
	\begin{align*}
	\interleave (P^1-P^2,Q^1-Q^2)\interleave_{\gamma} ^2
	\leq \frac{1}{2} \interleave(Y^1_r-Y^2_r,Z^1_r-Z^2_r)\interleave_\gamma ^2\,,
	\end{align*}
	showing that $\Gamma$ is a contraction. The conclusion follows.
\end{proof}

The following corollary follows immediately by integration by parts and an application of Fubini Theorem. We therefore omit the proof and refer to \cite[Lemma.~6.12 p.\ 547]{carmona2018probabilistic}.
\begin{corollary}
	Let $(P,Q)$ be a solution to \eqref{adjoint_H}, then it holds
	\begin{multline}\label{duality}
	\e\left(\langle P_T,Z_T^{\alpha,\beta}\rangle \right)
	=\e\left(\int_{0}^{T}\langle P_t,b_\alpha(t,X_t,\LL(X_t),\alpha_t)\cdot \beta\rangle
	+\langle Q_t,\sigma_\alpha(t,X_t,\LL(X_t),\alpha_t)\cdot \beta\rangle dt\right)
	\\
	- \e\left(\int_{0}^{T}f_x(t,X_t,\LL(X_t),\alpha_t)\cdot Z_t^{\alpha,\beta}+\tilde{\e}\left(f_\mu(t,X_t,\LL(X_t),\alpha_t)(\tilde{X}_t)\cdot \tilde{Z}_t^{\alpha,\beta}\right)\right).
	\end{multline}
\end{corollary}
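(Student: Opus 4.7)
The plan is to compute $d\langle P_t,Z_t^{\alpha,\beta}\rangle$ via the It\^o product rule, integrate over $[0,T]$, and take expectation; since $Z_0=0$, the left-hand side of \eqref{duality} will equal $\e\int_0^T(\cdots)dt$. After plugging in the SDE for $Z^{\alpha,\beta}$ from \eqref{variation_process} and the adjoint BSDE \eqref{adjoint_H}, the natural idea is that the quadratic (in $Z$) terms generated by $H_x$ cancel exactly with the $b_x\cdot Z$ and $\sigma_x\cdot Z$ contributions coming from $P_t\cdot dZ_t$ and the It\^o covariation. What survives on the drift side is $\langle P_t,b_\alpha\cdot\beta_t\rangle+\langle Q_t,\sigma_\alpha\cdot\beta_t\rangle-f_x\cdot Z_t^{\alpha,\beta}$ plus all the Lions-derivative terms, and the local martingale part drops upon taking expectation once one verifies that it is a genuine martingale. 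Integrability of that martingale follows from the a priori bounds $\e(\sup_t|Z_t|^4)<\infty$ (Lemma~\ref{lem:variation}), $\e(\sup_t|X_t|^r)<\infty$ (Theorem~\ref{thm:state}), $(P,Q)\in\mathcal S^2\times \mathcal H^2$ (Lemma~\ref{lem:adjoint}), together with the growth estimates in Assumption~\ref{ass:pontryagin}.

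The main technical step is handling the mean-field pieces. The drift of $Z$ contains $B_\mu(t,X_t,\LL(X_t,Z_t))=\tilde\e[b_\mu(t,X_t,\LL(X_t),\alpha_t)(\tilde X_t)\cdot \tilde Z_t]$, and likewise for $\Sigma_\mu$; the drift of $P$ contains $\tilde\e[\langle b_\mu(t,X_t,\LL(X_t),\alpha_t)(\tilde X_t),\tilde P_t\rangle + f_\mu(t,X_t,\LL(X_t),\alpha_t)(\tilde X_t)]$. I will apply Fubini on $\Omega\times\tilde\Omega$ and then exploit the fact that $(X,Z,P,Q,\alpha)$ and $(\tilde X,\tilde Z,\tilde P,\tilde Q,\alpha)$ have the same joint law on their respective spaces. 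Concretely, the cross-term
\[
\e\bigl\langle P_t,\,\tilde\e[b_\mu(t,X_t,\LL(X_t),\alpha_t)(\tilde X_t)\cdot \tilde Z_t]\bigr\rangle
\]
can, after a swap of the roles of the two copies, be rewritten as
\[
\e\,\tilde\e\bigl\langle \tilde P_t,\,b_\mu(t,\tilde X_t,\LL(X_t),\alpha_t)(X_t)\cdot Z_t\bigr\rangle
= \e\bigl\langle Z_t,\,\tilde\e[\langle b_\mu(t,\tilde X_t,\LL(X_t),\alpha_t)(X_t),\tilde P_t\rangle ]\bigr\rangle,
\]
which is exactly the form appearing in the adjoint BSDE, so it cancels against the corresponding term in $dP_t\cdot Z_t$. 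The analogous identity performed on the covariation $\langle Q,\Sigma_\mu\rangle$ produces the cancellation with the $\sigma_\mu$ contribution in $H_\mu$, leaving only the $f_\mu$ piece.

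Finally, the terminal datum $P_T=g_x(X_T,\LL(X_T))+\tilde\e[g_\mu(X_T,\LL(X_T))(\tilde X_T)]$ yields, by the same swap-and-Fubini trick applied to $\e\langle\tilde\e[g_\mu(X_T)(\tilde X_T)],Z_T\rangle$, the combination $\e(g_x\cdot Z_T)+\e\,\tilde\e(g_\mu(X_T,\LL(X_T))(\tilde X_T)\cdot \tilde Z_T)$; this is moved into the $dt$-integral by an integration by parts (cf.\ the terminal conditions produced by the G\^ateaux derivative formula in the Corollary preceding this statement), but in the present formulation of \eqref{duality} it is kept on the left as $\e\langle P_T,Z_T^{\alpha,\beta}\rangle$. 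The only nontrivial obstacle is the justification of Fubini and of the copy-swap: this rests on the joint measurability of all the coefficients in $(t,\omega,\tilde\omega)$, the integrability bounds mentioned above, and the fact that the L-derivatives are bounded on the support of $\LL(X_t)$ by Assumption~\ref{ass:pontryagin}. The result then reproduces \eqref{duality} verbatim, as in \cite[Lem.~6.12, p.~547]{carmona2018probabilistic}.
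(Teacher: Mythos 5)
Your proposal is correct and follows exactly the route the paper intends: the paper omits the proof, stating that it "follows immediately by integration by parts and an application of Fubini Theorem" with reference to \cite[Lemma~6.12, p.~547]{carmona2018probabilistic}, and your argument is precisely that — the It\^o product rule applied to $\langle P_t,Z_t^{\alpha,\beta}\rangle$, cancellation of the $b_x$, $\sigma_x$ and Lions-derivative terms via the Fubini/copy-swap identity, and justification of the martingale property from the stated moment bounds. No discrepancy to report.
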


\begin{remark}
	An immediate consequence of \eqref{duality} is the following formula for the G\^ateaux derivative of the cost functional
	\begin{align*}
	d J(\alpha)\cdot\beta	&=\e\bigg(\int_{0}^{T}\Big\{\langle b_\alpha(t,X_t,\LL(X_t),\alpha_t)\cdot \beta_t,P_t\rangle
	+\langle\sigma_\alpha(t,X_t,\LL(X_t),\alpha_t)\cdot \beta_t,Q_t\rangle
	\\
	&\quad \quad \quad \quad 
	+f_\alpha(t,X_t,\LL(X_t),\alpha_t)\cdot \beta_t\Big\}dt \bigg)
	\\
	&=\e\bigg(\int_{0}^{T} H_\alpha(t,X_t,\mathcal{L}(X_t),P_t,Q_t,\alpha_t)\cdot \beta_tdt\bigg).
	\end{align*}
	An application of Fubini Theorem then leads to the following representation for the gradient of $J$:
	\begin{align}
	\label{gradient_J}
	\nabla J(\alpha)_t
	=\e\left( H_\alpha(t,X_t,\mathcal{L}(X_t),P_t,Q_t,\alpha_t)\right)\,,\quad t\in [0,T].
	\end{align}
	It is hardly necessary to mention that the formula \eqref{gradient_J} is of fundamental importance for numerical purposes, see Section \ref{sec:numerics} below.
\end{remark}

We are now in position to prove the maximum principle.

\begin{proof}[Proof of Theorem \ref{thm:max}]
Let $\alpha\in \mathbb{A}$ be an optimal control for \eqref{SM}, $X$ the corresponding solution to (\ref{state}) and $(P,Q)$ the associated solution to (\ref{adjoint_H}). For $\beta\in \mathbb{A}$ we have by the optimality of $\alpha$
\begin{align*}
dJ(\alpha)\cdot (\beta-\alpha)=\langle \nabla J(\alpha),\beta-\alpha\rangle_{L^2([0,T];\R^k)}\geq 0\,.
\end{align*}
Invoking the convexity of the Hamiltonian (see Assumption \ref{ass:weak}), we get
\begin{align*}
\int_{0}^{T}\e\Big(H(t,X_t,\mathcal{L}(X_t),P_t,Q_t,\alpha_t)-H(t,X_t,\mathcal{L}(X_t),P_t,Q_t,\beta_t)\Big)dt\geq 0\,.
\end{align*}
For any arbitrary measurable set $C\subset[0,T]$ and $\alpha\in A$ we can define the admissible control
\begin{align*}
\beta_t=\begin{cases}
\alpha & \text{for}\enskip t\in C,\\
\alpha_t & \text{otherwise},
\end{cases}
\end{align*}
hence 
\begin{align*}
\int_{0}^{T}\mathbf{1}_C(t)\e\big(H(t,X_t,\mathcal{L}(X_t),P_t,Q_t,\alpha_t)-H(t,X_t,\mathcal{L}(X_t),P_t,Q_t,\alpha)\big)dt\geq 0.
\end{align*}
Therefore we get 
\begin{align*}
\e\big(H(t,X_t,\mathcal{L}(X_t),P_t,Q_t,\alpha_t)-H(t,X_t,\mathcal{L}(X_t),P_t,Q_t,\alpha)\big)\geq 0,
\end{align*}
$dt$-almost everywhere. This proves the theorem.
\end{proof}

\section{Numerical examples}
\label{sec:numerics}

In this section we focus on the FitzHugh-Nagumo model with external noise only, i.e.\ the system of $3N$ stochastic differential equations:
\begin{equation}\label{state_num}
\left\{
\begin{aligned}
&dv^i_t=\Big(v^i_t-\frac{(v^ i_t)^3}{3}-w_t^i + \alpha_t - \frac{1}{N}\sum\nolimits_{j=1}^N\bar J(v^i_t-V_{rev})y^j_t\Big)dt  +\sigma _{ext} d W^i_t
\\
&dw^i_t=c (v^i_t+a -b w^i_t)dt\enskip ,
\\
&dy^i_t=(a_r S (v_t^i)(1-y_t^i)-a_d y_t^i)dt ,
\end{aligned}\right.
\end{equation}
where we recall that
$S(v):= T_{max}/[1+\exp\lambda(v-V_T)].$

We are interested in controlling the average membrane potential (called in the following ``local field potential'') of a network of FitzHugh-Nagumo neurons into a desired state. Our cost functional is given by
\begin{equation}\label{choice:cost}
\begin{aligned}
f(t,x,\mu,\alpha)&:=|\int_{\R^3}v\mu(dv\times dw\times dy)-\overline{v}_t|^2
\\
g(t,x)&:=0,
\end{aligned}
\end{equation}
where $(\overline{v}_t)_t$ is a certain reference profile. We should mention that the average membrane potential will only give an idea about the average activity of the network at each time. For example a high average membrane potential is an indication that a high number of neurons are in the regenerative or active phase, while a low average membrane potential means that a high number of neurons are in the absolute refractory or silent phase.\smallskip

In the described case the adjoint equation is reduced to 
\begin{equation}\label{adjoint}
\left \{\begin{aligned}
dP_t&=-\bigg \{\left \langle b_x(t,X_t,\LL(X_t),\alpha_t),P_t\right \rangle
 + \mathbb{\tilde E}\left(\left\langle b_\mu(t,X_t,\LL(X_t),\alpha_t)(\tilde{X}_t),\tilde{P}_t\right\rangle\right )
 \\
&\quad \quad \quad 
 +\mathbb{\tilde E}\left (f_\mu(t,X_t,\LL(X_t),\alpha_t)(\tilde{X}_t)\right )\bigg \}dt  +Q_tdW_t
\\
P_T&=0.
\end{aligned}\right .
\end{equation}
In the following section we will give a short introduction on how to solve \eqref{adjoint} numerically.

\subsection{Numerical approximation of the adjoint equation} 

In general we consider the following non fully coupled MFFBSDE

\begin{equation}
\label{FBSDE}
\left\{
\begin{aligned}
&dX_t=b(t,X_t,\mathcal{L}(X_t))dt+\sigma(t,X_t,\mathcal{L}(X_t))dW_t
\\
&dY_t= \left[f(t,X_t,Y_t)+h(t,X_t,\mathcal{L}(X_t,Y_t))\right]dt-Z_tdW_t\enskip
\\
&X_0=\xi\\
&Y_T=g(X_T).
\end{aligned}\right.
\end{equation}
For the approximation of the forward component we consider an implicit Euler scheme for McKean-vlasov equations. Since this is standard, we will not go into further details. Concerning the backward component, we consider a scheme similar to the one presented in \cite{num}. We should mention that since we are not dealing with a fully coupled MFFBSDE, our situation is a lot easier to handle than the one treated in \cite{num}. For a given discrete time grid $\pi:0=t_0<t_1<...<t_N=T$, we consider the following numerical scheme:
\begin{align*}
Y_{t_k}^{\pi}&=\e\left(Y_{t_k+1}^{\pi}|\mathcal{F}_{t_k}\right)-(t_{k+1}-t_{k})\bigg\{f(t_k,X_{t_k}^{\pi},Y_{t_k}^{\pi})+h(t_{k+1},X_{t_{k}}^{\pi},\mathcal{L}(X_{t_{k+1}}^\pi,Y_{t_{k+1}}^\pi))\bigg\}\\
Z_{t_k}^{\pi}&:=(t_{k+1}-t_{k})^{-1}\e\left(Y_{t_k}^{\pi}(W_{t_{k+1}}-W_{t_{k}})|\mathcal{F}_{t_k}\right),\\
Y_{t_n}^{\pi}&=g(X_{t_n}^{\pi}),\quad Z_{t_n}^{\pi}=0.
\end{align*}
For the approximation of the conditional expectation, we make use of the decoupling field mentioned in \cite{carmona2018probabilistic}, to write
\begin{align*}
Y_{t_k+1}^{\pi}=u(t_{k+1},X_{t_{k+1}}^{\pi},\mathcal{L}(X_{t_{k+1}}^{\pi}))=:u(t_{k+1},X_{t_{k+1}}^{\pi}).
\end{align*}
Thus we can represent the conditional expectation in terms of a function $\tilde{u}$ by
\begin{align*}
\e\left(Y_{t_k+1}^{\pi}|\mathcal{F}_{t_k}\right)=\tilde{u}(t_{k+1},X_{t_k}^{\pi}).
\end{align*}
We approximate $\tilde{u}(t_{k+1},\cdot)$ with gaussian radial basis functions, by solving the following minimization problem for fixed nodes $x_1,...,x_L$:
\begin{align*}
\min_{\alpha}\e\left(|Y_{t_k+1}^{\pi}-\sum_{i=1}^{L}\alpha_i(t_{k+1})e^{\frac{1}{2 \delta}\|X_{t_k}^\pi-x_i\|^2}|^2\right),
\end{align*}
for $\alpha=(\alpha_1(t_{k+1}),...,\alpha_L(t_{k+1}))^\dagger$, where $\delta>0$ and $L\in \mathbb{N}$ are fixed. Therefore we initialize our reference points $x_1,...,x_L$ by $L$ independent realizations of $X_{t_k}^\pi$. For $M$ realizations of $Y_{t_k+1}^{\pi}$ and $X_{t_k}^\pi$, denoted  by $y_{t_k+1}^1,...,y_{t_k+1}^m$ and $x_{t_k+1}^1,...,x_{t_k+1}^m$ respectively, we then write 
\begin{align*}
y_{t_k+1}&=(y_{t_k+1}^1,...,y_{t_k+1}^m)^\dagger\\
A(t_k)&=(e^{\frac{1}{2 \delta}\|x_{t_k}^i-x_j\|^2})_{i=1,...,m,j=1,...,L}.
\end{align*}
Thus we need to minimize
\begin{align*}
\|y_{t_k+1}-A(t_k)\alpha(t_{k+1})\|^2.
\end{align*} 
A similar approach for BSDEs can be found in \cite{bsde}. There is no convergence analysis of this scheme for our assumptions on the coefficients, this should only give an idea how to solve the adjoint equation in practice. Furthermore we should mention, that in the case where only external noise is present, the duality (\ref{duality}) and the resulting gradient representation still holds true for any non adapted solution of (\ref{adjoint_H}). Thus one can also implement a numerical scheme for the adjoint equation, without any conditional expectations involved.

\subsection{Gradient descent algorithm}

We will now briefly sketch our gradient decent algorithm.
\begin{algorithm}
	Take an initial control $\alpha _0\in\mathbb A$, $s_0>0$, and recursively for $n=0,1,\dots:$
	\begin{itemize}
		\item[-] determine $X^{\alpha _n}$ by solving the state equation with an implicit particle scheme to avoid particle corruption;
		\item[-] solve the adjoint equation for given $X^{\alpha _n}$ in order to approximate $(P^{\alpha_n},Q^{\alpha_n})$;
		\item[-] approximate the gradient
		\[
		\nabla J(\alpha _n)_s
		=\e\Big[\langle b_\alpha(s,X_s^{\alpha _n},\LL (X^{\alpha _n}_s),\alpha _s^n),P^{\alpha _n}_s\rangle
		+ f_\alpha(s,X_s^{\alpha _n},\LL ( X_s^{\alpha _n}),\alpha ^n_s)
		\Big]
		\]
		via Monte-Carlo method, where $(P^{\alpha_n},Q^{\alpha_n})$ solves the adjoint equation;
		\item[-] update the control in direction of the steepest decent: $\alpha _{n+1}:=\alpha _n -s_n\nabla J(\alpha _n)$;
		\item[-] accept the new control if the cost corresponding to the new control is smaller than the previous cost, otherwise decrease the step size: $s_n=s_n/2$ and go back to step 2
		\item[-] the algorithm stops if $\|\nabla J(\alpha_n)\|<\epsilon$
	\end{itemize}
\end{algorithm}

To compute the expectation term, one is in fact reduced to simulate the solution of the network equation itself and use the particles as samples for the Monte-Carlo simulation.

\subsection{Numerical examples for systems of FitzHugh-Nagumo Neurons}

Although the solution to the adjoint equation is a $3$-dimensional process, in the following we will only plot its first variable, since the other variables are irrelevant for the gradient in our situation.\smallskip

To illustrate some problems we had with the simulations, we consider the example of the deterministic uncoupled case of equation (\ref{state_num}), where $\overline{J}=0$ and $\sigma_{ext}=0$. 
In the given situation the membrane potential $v$ becomes highly sensitive to small perturbations of the control at specific times, when we chose the control $\alpha_t\equiv \alpha$ close to the bifurcation value for the supercritical Hopf bifurcation point of the equation. This sensitivity can lead to high valued solutions of the corresponding adjoint equation for specific reference profiles. One example is to choose the reference profile as the $v$-trajectory of a solution to (\ref{state_num}), for a control parameter $\alpha$ in the limit cycle regime. This situation is illustrated by the figures below.

\begin{figure}[H]
	\begin{minipage}[t]{.32\linewidth} 
		\centering
		\includegraphics[width=\linewidth]{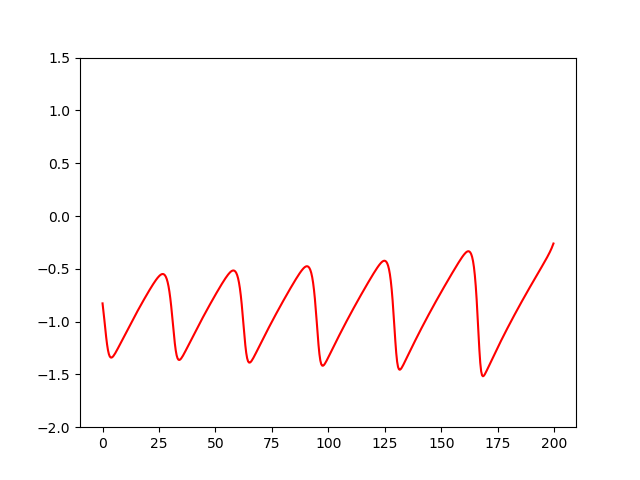}
		\caption{Membrane potential of the solution to (\ref{state_num}) for $\alpha\equiv 0.315$}
	\end{minipage}
	\hspace{.015\linewidth}
	\begin{minipage}[t]{.32\linewidth} 
		\centering
		\includegraphics[width=\linewidth]{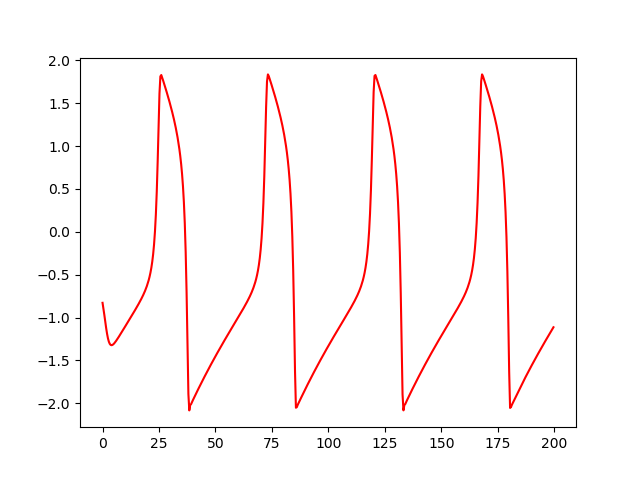}
		\caption{Reference profile generated by solving (\ref{state_num}) for $\alpha\equiv 0.33$}
	\end{minipage}
	\hspace{.015\linewidth}
	\begin{minipage}[t]{.32\linewidth} 
		\centering
		\includegraphics[width=\linewidth]{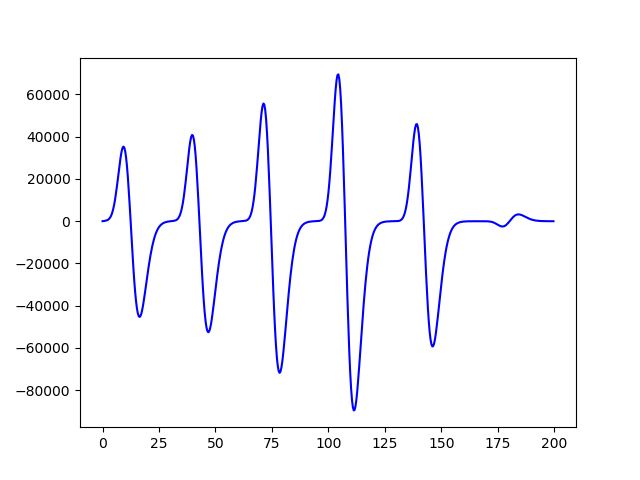}
		\caption{Solution to the corresponding adjoint equation}
	\end{minipage}
\end{figure}

The same type of phenomena also occurs in the case of the coupled system of stochastic FitzHugh-Nagumo neurons. Here it can lead to high fluctuations of the sample mean for the adjoint equation, thus a high number of particles is required to compute the expectation of the solution to the adjoint equation. A small illiustration is given by the figures below. 

\begin{figure}[H]
	\begin{minipage}[t]{.32\linewidth} 
		\centering
		\includegraphics[width=\linewidth]{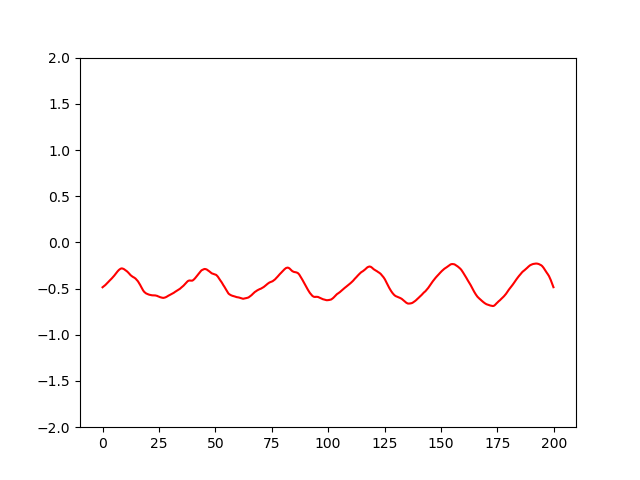}
		\caption{Local field potential of the solution to (\ref{state_num}) for $\alpha\equiv 0$}
	\end{minipage}
	\hspace{.015\linewidth}
	\begin{minipage}[t]{.32\linewidth} 
		\centering
		\includegraphics[width=\linewidth]{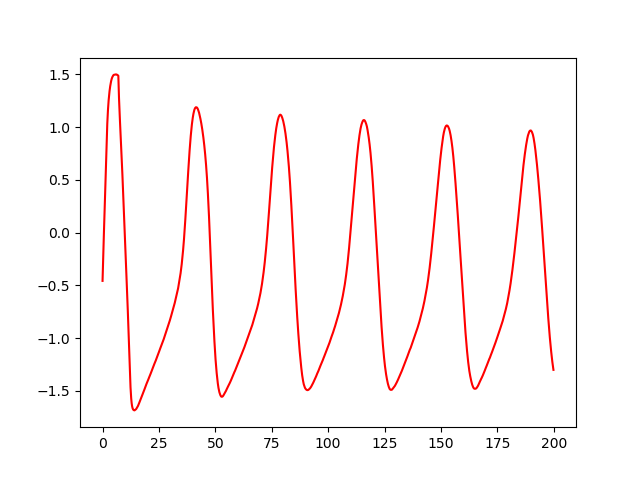}
		\caption{Reference profile chosen as the local field potential of \eqref{state_num} for $\alpha(t)= 0.8$  if $t\leq 7$}
	\end{minipage}
	\hspace{.015\linewidth}
	\begin{minipage}[t]{.32\linewidth} 
		\centering
		\includegraphics[width=\linewidth]{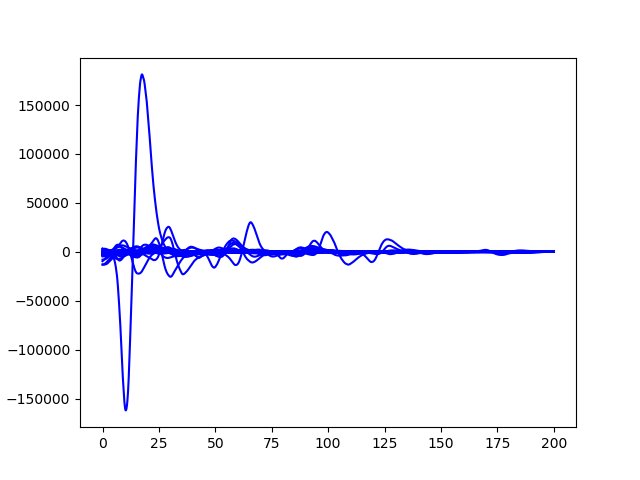}
		\caption{Samples of the solution to the adjoint equation}
	\end{minipage}
\end{figure}

In this example and in the following, the initial states are uniformly distributed on the orbit of a solution to (\ref{state_num}) with $\alpha\equiv 0$, $\sigma_{ext}=0$ and initial conditions $V_0=-0.828,w_0=-0.139,y_0=0.589$. The other parameters are given below in Table 1. Furthermore we are always using $N=1000$ particles for the particle approximation of (\ref{state_num}).

\subsubsection{Control of a coupled system of FitzHugh-Nagumo Neurons}
	For our first example, we consider a parameter regime where the activity of a large number of neurons of the network at some time $t$ leads to further activity at a later time, without any external current applied to the system. Therefore we slow down the gating variable, by decreasing the closing rate of the synaptic gates. This way its impact on the network is still high enough, when a large part of the network is excitable again.\smallskip

	Our goal is now to increase the activity of the network up to time $t=100$ and then control the network back into its resting potential. Up to time $t=100$, the following reference profile shows the local field potential of a network of coupled FitzHugh-Nagumo neurons, when a constant input current of magnitude $0.8$ is applied for a time period of $\Delta t=7$ at $t=0$. For times $t>100$ it shows the resting potential of a single FitzHugh-Nagumo neuron.
	
	\begin{figure}[H]
		\begin{minipage}[t]{.45\linewidth} 
			\centering
			\includegraphics[width=\linewidth]{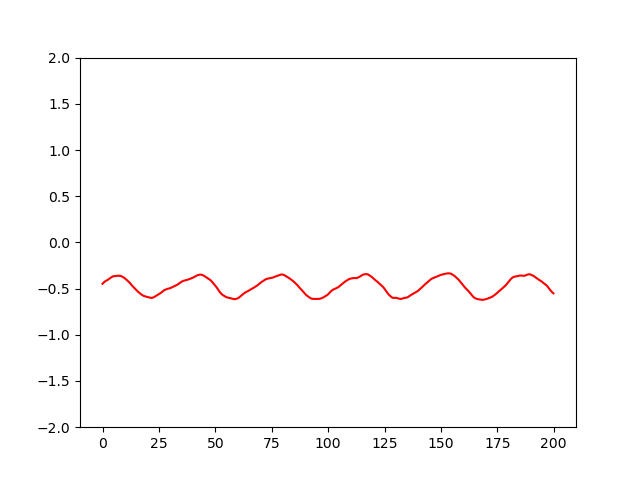}
			\caption{Uncontrolled local field potential}
		\end{minipage}
		\hspace{.1\linewidth}
		\begin{minipage}[t]{.45\linewidth} 
			\centering
			\includegraphics[width=\linewidth]{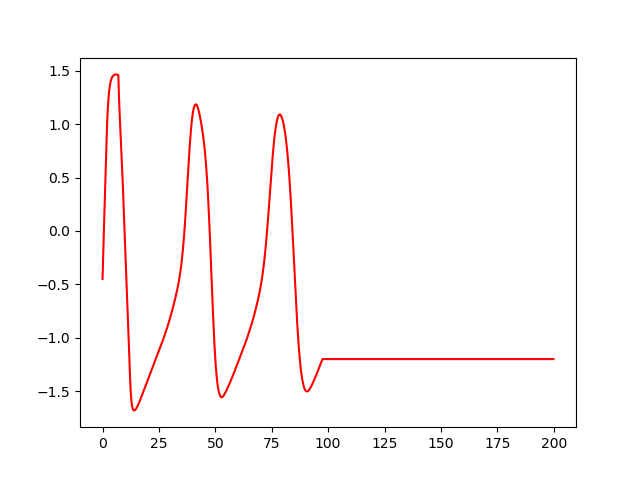}
			\caption{Reference profile}
		\end{minipage}
	\end{figure}

	We expect the optimal control to raise the membrane potential for a small time period at $t=0$ and then counteract the stimulating effect of the coupling around $t=100$. However this effects should not occur in the uncoupled setting, which we will consider afterwards. 
	
	The following shows the optimal control and the corresponding optimal local field potential. We remind that this might only be locally optimal, since we cannot expect to find a globally optimal control with our gradient decent algorithm.
	
	\begin{figure}[H]
		\begin{minipage}[t]{.45\linewidth} 
			\centering
			\includegraphics[width=\linewidth]{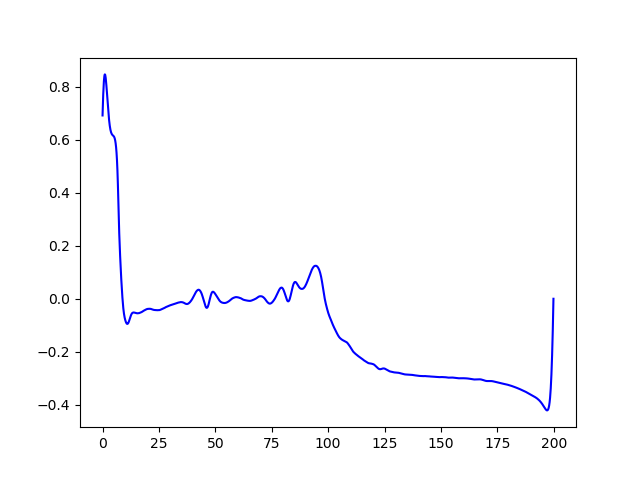}
			\caption{Optimal control}
		\end{minipage}
		\hspace{.1\linewidth}
		\begin{minipage}[t]{.45\linewidth} 
			\centering
			\includegraphics[width=\linewidth]{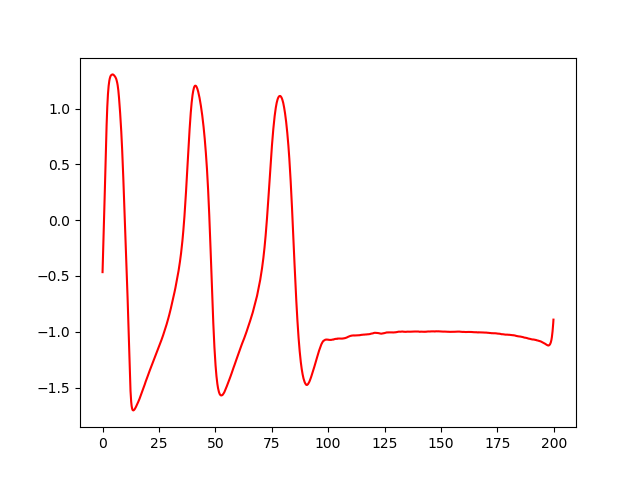}
			\caption{Local field potential with optimal control}
		\end{minipage}
	\end{figure}

\newpage
	

\subsubsection{Control of an uncoupled system of FitzHugh-Nagumo Neurons}
	Now we investigate the control problem for the uncoupled equation (\ref{state_num}), where $J=0$. Since the reference profile it still the same as in example 5.3.1, we will only present the corresponding optimal control.

	\begin{figure}[H]
		\begin{minipage}[h]{.45\linewidth} 
			\includegraphics[width=\linewidth]{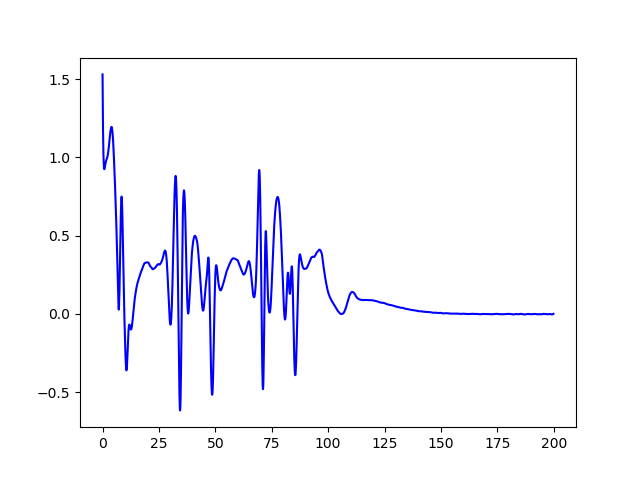}
			\caption{Optimal control}
		\end{minipage}
	\end{figure}

	As expected, the control does not need to counteract any stimulating effects for times $t>100$. Furthermore it is not sufficient in the uncoupled case to apply an input current for a small time period at $t=0$, to reach the desired local field potential up to time $t=100$. 
	

\begin{center}
	\begin{tabular}{ |p{3cm}|p{3cm}|p{3cm}|  }
		\hline
		\multicolumn{3}{|c|}{\textbf{Table 1}: Parameters used for the examples} \\
		\hline
		Time parameters & FitzHugh-Nagumo parameters & Synapse\\
		\hline
		$t_{end}=200$ & $a=0.7$  &  $V_{rev}=1$  
		\\
		$\Delta t=0.1$ & $b=0.8$  & $a_r=1$
		\\
		& $c=0.08$ & $a_d=0.3$ 
		\\
		& $\sigma_{ext}=0.04$  &  $T_{max}=1$
		\\
		&   & $\lambda=0.1$
		\\
		&   & $V_{rev}=1.2$
		\\
		&  & $V_T=2$
		\\
		&  & $J=0.46$
		\\
		&  & $\sigma_J=0$
		\\
		\hline
	\end{tabular}
\end{center}

\section*{Acknowledgement} This work has been funded by Deutsche Forschungsgemeinschaft (DFG) through grant CRC 910 ``Control of self-organizing nonlinear systems: Theoretical methods and concepts of application'', Project (A10) ``Control of stochastic mean-field equations with applications to brain networks.''
Both authors are thankful to Wilhelm Stannat, Tilo Schwalger and Fran\c{c}ois Delarue for helpful discussions.


\begin{thebibliography}{10}
	
	\bibitem{albi2017mean}
	G.~Albi, Y.-P. Choi, M.~Fornasier, and D.~Kalise.
	\newblock Mean field control hierarchy.
	\newblock {\em Applied Mathematics \& Optimization}, 76(1):93--135, 2017.
	
	\bibitem{baladron2012mean}
	J.~Baladron, D.~Fasoli, O.~Faugeras, and J.~Touboul.
	\newblock Mean-field description and propagation of chaos in networks of
	hodgkin-huxley and fitzhugh-nagumo neurons.
	\newblock {\em The Journal of Mathematical Neuroscience}, 2(1):10, 2012.
	
	\bibitem{barbu2016optimal}
	V.~Barbu, F.~Cordoni, and L.~D. Persio.
	\newblock Optimal control of stochastic fitzhugh--nagumo equation.
	\newblock {\em International Journal of Control}, 89(4):746--756, 2016.
	
	\bibitem{bensoussan2013mean}
	A.~Bensoussan, J.~Frehse, and P.~Yam.
	\newblock {\em Mean field games and mean field type control theory}, volume
	101.
	\newblock Springer Verlag, Berlin, 2013.
	
	\bibitem{bonnans2019schauder}
	J.~F. Bonnans, S.~Hadikhanloo, and L.~Pfeiffer.
	\newblock Schauder estimates for a class of potential mean field games of
	controls.
	\newblock {\em Applied Mathematics and Optimization}, 2019.
	
	\bibitem{bossy2015clarification}
	M.~Bossy, O.~Faugeras, and D.~Talay.
	\newblock Clarification and complement to â€œmean-field description and
	propagation of chaos in networks of hodgkin--huxley and fitzhugh--nagumo
	neurons.
	\newblock {\em The Journal of Mathematical Neuroscience (JMN)}, 5(1):19, 2015.
	
	\bibitem{buckdahn2016stochastic}
	R.~Buckdahn, J.~Li, and J.~Ma.
	\newblock A stochastic maximum principle for general mean-field systems.
	\newblock {\em Applied Mathematics \& Optimization}, 74(3):507--534, 2016.
	
	\bibitem{carmona2018probabilistic}
	R.~Carmona and F.~Delarue.
	\newblock {\em Probabilistic Theory of Mean Field Games with Applications
		I-II}.
	\newblock Springer, 2018.
	
	\bibitem{carmona2013control}
	R.~Carmona, F.~Delarue, and A.~Lachapelle.
	\newblock Control of mckean--vlasov dynamics versus mean field games.
	\newblock {\em Mathematics and Financial Economics}, 7(2):131--166, 2013.
	
	\bibitem{num}
	J.-F. Chassagneux, D.~Crisan, and F.~Delarue.
	\newblock Numerical method for fbsdes of mckean-vlasov type.
	\newblock {\em Annals of Applied Probability}, 29, 03 2017.
	
	\bibitem{stochFHN}
	F.~Cordoni and L.~Di~Persio.
	\newblock Optimal control for the stochastic fitzhugh-nagumo model with
	recovery variable.
	\newblock {\em Evolution Equations \& Control Theory}, 7, 05 2017.
	
	\bibitem{deco2009stochastic}
	G.~Deco, E.~T. Rolls, and R.~Romo.
	\newblock Stochastic dynamics as a principle of brain function.
	\newblock {\em Progress in neurobiology}, 88(1):1--16, 2009.
	
	\bibitem{destexhe1994synthesis}
	A.~Destexhe, Z.~F. Mainen, and T.~J. Sejnowski.
	\newblock Synthesis of models for excitable membranes, synaptic transmission
	and neuromodulation using a common kinetic formalism.
	\newblock {\em Journal of computational neuroscience}, 1(3):195--230, 1994.
	
	\bibitem{dos2019freidlin}
	G.~Dos~Reis, W.~Salkeld, and J.~Tugaut.
	\newblock Freidlin--wentzell ldp in path space for mckean--vlasov equations and
	the functional iterated logarithm law.
	\newblock {\em The Annals of Applied Probability}, 29(3):1487--1540, 2019.
	
	\bibitem{dudley2018real}
	R.~M. Dudley.
	\newblock {\em Real analysis and probability}.
	\newblock Chapman and Hall/CRC, 2018.
	
	\bibitem{ermentrout2010mathematical}
	B.~Ermentrout and D.~Terman.
	\newblock {\em Mathematical Foundations of Neurosciences}, volume~35.
	\newblock Springer Science and Business Media, 2010.
	
	\bibitem{fitzhugh1961impulses}
	R.~FitzHugh.
	\newblock Impulses and physiological states in theoretical models of nerve
	membrane.
	\newblock {\em Biophysical journal}, 1(6):445, 1961.
	
	\bibitem{bsde}
	O.~Kebiri, L.~Neureither, and C.~Hartmann.
	\newblock Adaptive importance sampling with forward-backward stochastic
	differential equations.
	\newblock 02 2018.
	
	\bibitem{nagumo1962active}
	J.~Nagumo, S.~Arimoto, and S.~Yoshizawa.
	\newblock An active pulse transmission line simulating nerve axon.
	\newblock {\em Proceedings of the IRE}, 50(10):2061--2070, 1962.
	
	\bibitem{pardoux2014stochastic}
	E.~Pardoux and A.~R{\u{a}}{\c{s}}canu.
	\newblock {\em Stochastic Differential Equations, Backward SDEs, Partial
		Differential Equations}.
	\newblock Springer, 2014.
	
	\bibitem{pfeiffer2015optimality}
	L.~Pfeiffer.
	\newblock Optimality conditions for mean-field type optimal control problems.
	\newblock {\em SFB Report}, 15:2015--015, 2015.
	
	\bibitem{pfeiffer2017numerical}
	L.~Pfeiffer.
	\newblock Numerical methods for mean-field-type optimal control problems.
	\newblock {\em arXiv preprint arXiv:1703.10001}, 2017.
	
	\bibitem{SchDeg17}
	T.~{Schwalger}, M.~{Deger}, and W.~{Gerstner}.
	\newblock {Towards a theory of cortical columns: From spiking neurons to
		interacting neural populations of finite size}.
	\newblock {\em PLoS Comput. Biol.}, 13(4):e1005507, 2017.
	
	\bibitem{yong1999stochastic}
	J.~Yong and X.~Y. Zhou.
	\newblock {\em Stochastic controls: Hamiltonian systems and HJB equations},
	volume~43.
	\newblock Springer Science \& Business Media, 1999.
	
\end{thebibliography}
\bibliographystyle{abbrv}

\end{document}